\newtheorem{theorem}{Theorem}[section]
\newtheorem{lemma}[theorem]{Lemma}
\newtheorem{proposition}[theorem]{Proposition}
\theoremstyle{definition}
\newtheorem{assumption}[theorem]{Assumption}
\newtheorem{remark}[theorem]{Remark}
\numberwithin{equation}{section}
\theoremstyle{plain}
\numberwithin{equation}{section} 
\numberwithin{figure}{section} 
\theoremstyle{plain}
\theoremstyle{plain}
\theoremstyle{remark}
\newtheorem*{acknowledgement*}{Acknowledgement}
\theoremstyle{example}
\newcommand{\cA}{{\mathcal A}}
\newcommand{\cB}{{\mathcal B}}
\newcommand{\cC}{{\mathcal C}}
\newcommand{\cD}{{\mathcal D}}
\newcommand{\cE}{{\mathcal E}}
\newcommand{\cF}{{\mathcal F}}
\newcommand{\cH}{{\mathcal H}}
\newcommand{\cJ}{{\mathcal J}}
\newcommand{\cK}{{\mathcal K}}
\newcommand{\cL}{{\mathcal L}}
\newcommand{\cO}{{\mathcal O}}
\newcommand{\cP}{{\mathcal P}}
\newcommand{\cS}{{\mathcal S}}
\newcommand{\cT}{{\mathcal T}}
\newcommand{\cX}{{\mathcal X}}
\newcommand{\cY}{{\mathcal Y}}
\newcommand{\te}{{\theta}}
\newcommand{\Om}{{\Omega}}
\newcommand{\om}{{\omega}}
\newcommand{\ve}{{\varepsilon}}
\newcommand{\del}{{\delta}}
\newcommand{\Del}{{\Delta}}
\newcommand{\gam}{{\gamma}}
\newcommand{\Gam}{{\Gamma}}
\newcommand{\sig}{{\sigma}}
\newcommand{\al}{{\alpha}}
\newcommand{\be}{{\beta}}
\newcommand{\ka}{{\kappa}}
\newcommand{\la}{{\lambda}}
\newcommand{\bbC}{{\mathbb C}}
\newcommand{\bbE}{{\mathbb E}}
\newcommand{\bbN}{{\mathbb N}}
\newcommand{\bbR}{{\mathbb R}}
\newcommand{\bbZ}{{\mathbb Z}}
\newcommand{\bbI}{{\mathbb I}}
\begin{document}
\title[]{Limit theorems for some skew products with mixing base maps} 
 \vskip 0.1cm
 \author{Yeor Hafouta \\
\vskip 0.1cm
 Institute  of Mathematics\\
Hebrew University\\
Jerusalem, Israel}%
\address{
Institute of Mathematics, The Hebrew University, Jerusalem 91904, Israel}
\email{yeor.hafouta@mail.huji.ac.il}%

\thanks{ }
\subjclass[2010]{60F05, 37H99, 60K37, 37D20, 37A25}%
\keywords{Markov chains; random environment; random dynamical systems; limit theorems; central limit theorem;
local limit theorem; renewal theorem;}
\dedicatory{  }
 \date{\today}

\begin{abstract}\noindent
We obtain central limit theorem, local limit theorems and renewal theorems for 
stationary processes generated by skew product maps $T(\om,x)=(\te\om,T_\om x)$ together with a $T$-invariant measure,
whose base map $\te$ satisfies certain topological and mixing conditions and the
maps $T_\om$ on the fibers are certain non-singular distance expanding maps. Our results hold true when $\te$ is either a sufficiently fast mixing Markov shift with positive transition densities or a
(non-uniform) Young tower with at least one periodic point 
and polynomial tails.  
The proofs are based on the random complex Ruelle-Perron-Frobenius theorem from \cite{book}
applied with appropriate random 
transfer operators generated by $T_\om$, together with certain regularity assumptions 
(as functions of $\om$) of these operators. 
Limit theorems for deterministic processes whose distributions on the fibers are generated by 
Markov chains with transition operators satisfying a random version of the Doeblin condition 
will also be obtained.
The main innovation in this paper is that the results hold true even though the spectral theory used in \cite{Aimino} does not seem to 
be applicable, and the dual of the Koopman operator of $T$ 
(with respect to the invariant measure) does not seem to have a spectral gap.

\end{abstract}
\maketitle
\markboth{Y. Hafouta}{Limit theorems some skew products with mixing invertible base maps} 
\renewcommand{\theequation}{\arabic{section}.\arabic{equation}}
\pagenumbering{arabic}

\section{Introduction}\label{sec1}\setcounter{equation}{0}
Probabilistic limit theorems for deterministic dynamical systems and Markov chains
is a well studied topic. Many results of this type are consequences of quasi-compactness (i.e. spectral gap) of an appropriate Markov operator together with some
perturbation theorem (see, for instance, \cite{GH}, \cite{HenHerve}, \cite{Neg1} and \cite{Neg2}).
The situation with limit theorems for random dynamical systems and Markov 
chains in random environment is more complicated, since, as opposed to the deterministic case, 
there is not only one underlying operator, but a family of random operators, 
so no spectral theory can be exploited. The central limit theorem (CLT) and large deviations theorem 
in this context can be obtained (see \cite{Kifer-1998}, \cite{Kifer-1991}, \cite{Kifer-1996} and references therein)  
from the random real Ruelle-Perron-Frobenius (RPF) theorem (see, for instance, \cite{Kifer-1996} 
and \cite{Kifer-Thermo}). Relying on this RPF theorem, limit theorems also follow from the spectral approach
of \cite{drag}.
In \cite{book} we proved a random complex RPF theorem and used it to obtain (under certain conditions) a version of the Berry-Esseen theorem and the local central limit theorem for such processes in random environments.

Let $(\Om,\cF,P,\te)$ be an ergodic invertible measure preserving system (MPS), $\cX$ be a compact space and $T_\om:\cX\to\cX$ be a random expanding non-singular transformation with respect to some probability measure
$\textbf{m}$. In this paper, under certain conditions, we will prove (annealed) limit theorems for stationary processes generated by the skew product map $T(\om,x)=(\te\om,T_\om x)$, together with a $T$-invariant measure $\boldsymbol{\mu}=\int\boldsymbol{\mu}_\om dP(\om)$ whose disintegrations $\boldsymbol{\mu}_\om$ are certain random Gibbs measures. We will also obtain limit theorems for non-invertible MPS's in the case when $T$ preserves a product measure of the form $\boldsymbol{\mu}=P\times(\bar h\textbf{m})$ for some continuous function $\bar h$. In \cite{Aimino} (see also references therein), the authors proved annealed limit theorems in the case when $T_\om,T_{\te\om},T_{\te^2\om},...$ are independent and the skew product map preserves a measure of the above product form.
Our results hold true when the compositions $T_{\te^{n-1}\om}\circ T_{\te^{n-2}\om}\circ\cdots\circ T_{\om}$ are taken along orbits $\{\te^k\om:\,k\geq0\}$ of a map $\te$ satisfying some mixing and topological conditions and having at least one periodic point, 
assuming that appropriate perturbations $\cL_{it}^\om,\,t\in\bbR$ of the dual $\cL_0^\om$ (with respect to $\textbf{m}$) of the Koopman operator $g\to g\circ T_\om$
satisfy some regularity conditions (as functions of $\om$) around one periodic orbit of $\te$.

Quenched limit theorems, in our context, describe the asymptotic behaviour of the distribution the iterates 
$T_{\te^{n-1}\om}\circ T_{\te^{n-2}\om}\circ\cdots\circ T_{\om}x,\,n\geq1$, when $\om$ is fixed (but rages over a set of full $P$-probability) and $x$ is distributed 
according to a random Gibbs measure.
Often  annealed  limit theorems follow almost directly from the quenched ones just by integration over appropriate sets $\Gam\subset\Om$. We will use this approach (see Section \ref{sec4}) to prove an annealed CLT, but, for general $\te$'s, finer (annealed) limit theorems do not follow directly from the corresponding quenched limit theorems. For instance, in the local central limit theorem (LLT) the asymptotic behaviour as $n\to\infty$ of certain expectations multiplied by $\sqrt n$ is studied (see Theorem \ref{LLT--D.S.}), which makes it impossible to choose appropriate
sets $\Gam$ which do not depend on $n$. The (annealed) renewal theorem also does not follow from a corresponding quenched limit theorem,
since (see Theorem \ref{renewal-D.S.}) it describes the asymptotic behaviour of certain series of expectations, 
and either way, to the best of my knowledge, no quenched renewal theorem has been proved in the setup of this paper.

In several circumstances annealed limit theorems follow from spectral theory of a single Markov operator
together with some perturbation theorem.
For instance, the results in \cite{Aimino} and \cite{Ish} rely on the theory of quasi compact operators, where due to independence of the random maps the authors could exploit spectral properties of the averaged (Fourier) operators
$\cA_{it}=\int\cA_{it}^\om dP(\om)$, obtained from integrating appropriate perturbations $\cA_{it}^\om$
of the dual $\cA_0^\om$ of the Koopman operator $g\to g\circ T_\om$ (with respect to an appropriate  measure). 
In our situation the maps $T_\om, T_{\te\om},T_{\te^2\om},...$ are not independent, so there is no connection between the iterates of the averaged Fourier operators and the average $\int \cA_{it}^{\om,n}dP(\om)$ of the iterates $\cA_{it}^{\om,n}=\cA_{it}^{\te^{n-1}\om}\circ\cdots\circ \cA_{it}^{\te\om}\circ A_{it}^\om$ of the random Fourier operators. 
Another example is the case when the map $\te$ is distance expanding. In this case the skew product map $T$ is also distance expanding (since $T_\om$'s are), and so limit theorems for stationary sums generated by $T$ follow from the spectral theory of the dual $\cL$ of the Koopman
operator $g\to g\circ T$. For instance, $\te$ can be a topologically mixing subshift of finite type (see \cite{Bow}) or a uniform Young tower (see \cite{Young1} and \cite{Chaz}), but our results hold true for certain non-distance expanding maps such as (non-uniform) Young towers (see \cite{Young2} and \cite{Chaz}) and uncountable Markov shifts.

In order to overcome the inapplicability of the spectral theory
described in the previous paragraph, we will first apply the random complex RPF theorem from \cite{book}.
After applying this RPF theorem, the main difficulty in proving the LLT and renewal theorem
arises in obtaining appropriate (at least polynomial) 
decay of the integrals
$\int \|\cA_{it}^{\om,n}\|dP(\om),\,t\not=0$ as $n\to0$. When $\om$ is fixed, we obtained 
in \cite{book} certain quenched LLT and Berry-Esseen theorem by 
showing that the operator norms $\|\cA_{it}^{\om,n}\|,\,t\not=0$ decay appropriately to $0$ as $n\to\infty$. 
The problem in obtaining corresponding estimates of $\int \|\cA_{it}^{\om,n}\|dP(\om)$ by integration
is that the rate of convergence of $\|\cA_{it}^{\om,n}\|,\,t\not=0$ to $0$, in general, is not uniform in $\om$
as it depends, among other things, on certain almost sure limit theorems (e.g. on ergodic theorems). 
We will show that under appropriate mixing and topological conditions we can control, uniformly for $t$'s belonging to compact sets $J$ not containing the origin, the order in $n$ of $\|\cA_{it}^{\om,n}\|$  on sets $\Gam_n=\Gam_n(J)$, so that $1-P(\Gam_n)$ decays polynomially fast to $0$. The arguments in the proof of these estimates can be viewed as  
averaged ("annealed") version of the periodic point approach from \cite{book} (see Section 2.10 and Chapter 7 there).

Our results  hold true when $\te$ is the two sided Markov shift generated by a sufficiently fast mixing stationary Markov chain $\zeta_n,\,n\geq0$ with positive transition densities and initial distribution assigning positive mass to open sets. In this case
$\Om=\cY^\bbZ$, where $\cY$ is the state space of the chain, and periodic points have the form $\bar a=(...,a,a,a,...)$, $\bar a=(a_i)_{i=0}^{n_0-1}\in\cY^{n_0}$.
In fact, our conditions will also be satisfied when the shift is generated by a stationary and sufficiently fast mixing
process so that for some periodic $\bar a$ we have 
$P(\zeta_{i+(j-1)n_0}\in A_{j,i};\,0\leq i<n_0,\,1\leq j\leq s)>0$ for
 any open sets $A_{i,j}$ so that  $a_i\in A_{i,j}$ for all $i$ and $j$.
When (almost) all the $T_\om$'s preserve the same absolutely continuous (with respect to $\textbf{m}$) measure $\ka$,
then our results also hold true for non-invertible $\te$'s such as (non-uniform) Young towers (see \cite{Young2} and \cite{Chaz}) with at least one periodic point and exponential tails. In fact,
we obtain also results for compositions of random maps having the form
$T_\om(x)=\boldsymbol T_{\cO(\om)}(x)$, where $\boldsymbol T_q(x)$ is a H\"older continuous function of the variable $q\in\Om^\bbN$, $\te$ is the shift map and $\cO(\om)=\{\te^n\om:\,n\geq0\}$ is the orbit  of a topologically mixing subshift of finite type or a Young tower with the properties described above.


\section{Preliminaries and main results}\label{sec2}\setcounter{equation}{0}
Our setup consists of a complete probability space 
$(\Om,\cF,P)$ together with a $P$-preserving ergodic transformation $\te:\Om\to\Om$ and
of a compact metric space $(\cX,\rho)$ together with the Borel $\sig$-algebra $\cB$.
Set $\cE=\Om\times\cX$ and let
\[
\{T_\om: \cX\to\cX,\, \om\in\Om\}
\]
be a collection of continuous bijective maps between $\cX$ to itself
so that the map $(\om,x)\to T_\om x$ is measurable with respect to $\cF\times\cB$.
Consider the 
skew product transformation $T:\cE\to\cE$ given by 
\begin{equation}\label{Skew product}
T(\om,x)=(\te\om,T_\om x).
\end{equation}
For any $\om\in\Om$ and $n\in\bbN$ consider the $n$-th step iterates $T_\om^n$ given by
\begin{equation}\label{T om n}
T_\om^n=T_{\te^{n-1}\om}\circ\cdots\circ T_{\te\om}\circ T_\om: \cX\to\cX.
\end{equation}
Then $T^n(\om,x)=(\te^n\om,T_\om^nx)$.
The main results of this paper are probabilistic limit theorems for random 
variables of the form $S_nu(\om,x)$, where $S_n=\sum_{j=0}^{n-1}u\circ T^n$, $u=u(\om,x)$
is a function satisfying certain regularity conditions and $(\om,x)$ is distributed according
to some special $T$-invariant probability measure $\boldsymbol{\mu}$. 
Our additional requirements concerning the family of maps $\{T_\om:\om\in\Om\}$
are collected in the following assumptions which are similar to \cite{MSU}.
\begin{assumption}[Topological exactness]\label{TopExAssRand}
There exist a constant $\xi>0$ and a random variable $n_\om\in\bbN$ such that 
$P$-a.s.,
\begin{equation}\label{TopExRand}
T_\om^{n_\om}(B(x,\xi))=
\cX\,\text{ for any } x\in\cX
\end{equation}
where for any  $x\in\cX$ and $r>0$, 
$\,B(x,r)$ denotes a ball in $\cX$ around $x$ with radius $r$.
\end{assumption}

\begin{assumption}[The pairing property]\label{Ass pair prop}
There exist random variables $\gam_\om>1$ and $D_\om$ such that $P$-a.s.
for any $x,x'\in\cX$ with $\rho(x,x')<\xi$ 
we can write 
\begin{equation}\label{Pair1.0}
T_\om^{-1}\{x\}=\{y_1,....,y_k\}\,\,\text{ and }\,\,T_\om^{-1}\{x'\}=\{y_1',...,y_k'\}
\end{equation}
where $\xi$ is specified in Assumption \ref{TopExAssRand},
\[
k=k_{\om,x}=|T_\om^{-1}\{x\}|\leq D_\om,
\]
$|\Gam|$ is the cardinality of a finite set $\Gam$
and
\begin{equation}\label{Pair2.0}
\rho(y_i,y_i')\leq (\gam_\om)^{-1}\rho(x,x')
\end{equation}
for any $1\leq i\leq k$.
\end{assumption}

Next, let $\al\in(0,1]$. For any measurable function $g:\cE\to\bbC$ and $\om\in\Om$ consider 
the function $g_\om:\cX\to\bbC$ given by $g_\om(x)=g(\om,x)$. Set
\begin{eqnarray*}
v_{\al,\xi}(g_\om)=\inf\{R: |g_\om(x)-g_\om(x')|\leq R\rho^\al(x,x')\,\text{ if }\,
\rho(x,x')<\xi\}\\
\text{and }\,\,\,\|g_\om\|_{\al,\xi}=\|g_\om\|_\infty+v_{\al,\xi}(g_\om)\hskip1cm
\end{eqnarray*}
where $\|\cdot\|_\infty$ is the supremum norm and $\rho^\al(x,x')=\big(\rho(x,x')\big)^\al$. 
The norms $\|g_\om\|_{\al,\xi}$ are $\cF$-measurable as a consequence
of Lemma 5.1.3 in \cite{book}. We denote by $\cH^{\al,\xi}$ the space of all functions 
$f:\cX\to\bbC$ so that $\|f\|_{\al,\xi}<\infty$.

Let $\phi,u:\cE\to\bbR$ be two measurable functions so that $P$-a.s. we have $\phi_\om,u_\om\in\cH^{\al,\xi}$.
Let $z\in\bbC$ and consider the transfer operators $\cL_z^\om,\,\om\in\Om$ which 
map functions on $\cX$ to functions on $\cX$ by
the formula
\begin{eqnarray}\label{TraOp}
\cL^\om_z g(x)=\sum_{y\in T_\om^{-1}\{x\}}e^{\phi_\om(y)+zu_\om(y)}g(y).
\end{eqnarray}
For any $n\geq1$ set 
\[
\cL_z^{\om,n}=\cL_z^{\te^{n-1}\om}\circ\cdots\circ\cL_z^{\te\om}\circ\cL_z^\om.
\]
Then 
$
\cL_z^{\om,n}g(x)=\sum_{y\in (T_\om^n)^{-1}\{x\}}e^{S_n^\om \phi(y)+zS_n^\om u(y)}g(y),
$
where $S_n^\om \psi=\sum_{j=0}^{n-1}\psi_{\te^j\om}\circ T_\om^j$  for any $\psi:\cE\to\bbC$.
Henceforth we will rely on
\begin{assumption}\label{bound ass+Tr Op meas+ass phi u}
(i) The random variables $n_\om,D_\om,\|\phi_\om\|_{\al,\xi}$ and $\|u_\om\|_{\al,\xi}$ 
are bounded and $\gam_\om-1$ is bounded from below by some positive constant.
 
(ii) The transfer operators $\cL_z^\om$ are measurable, namely the map $(\om,x)\to\cL^\om_zg_\om(x)$
is measurable, for any measurable function $g:\cE\to\bbC$.
\end{assumption}
Since $\phi$ and $u$ are measurable, $\cL_z,\,z\in\bbC$ are measurable when
for each $y\in\cX$, the map $(\om,x)\to\bbI_{T_\om^{-1}\{x\}}(y)$ is measurable with 
respect to $\cF\times\cB$, where $\bbI_C$ is the indicator function of a set $C$.
Under Assumption \ref{bound ass+Tr Op meas+ass phi u} (i) we have
$\cL_z^{\om,n}(\cH_\om^{\al,\xi})\subset\cH_{\te^n\om}^{\al,\xi}$
and the corresponding operator norm satisfies 
$\|\cL_{it}^{\om,n}\|_{\al,\xi}\leq B(1+|t|)$ where $B$ is some constant (see Lemma 5.6.1 in \cite{book} and 
Section \ref{Additional}).

\subsection{Results for invertible base maps}\label{InvRes}
We assume here that the measure preserving system 
$(\Om,\cF,P,\te)$ is invertible.
Let $\boldsymbol{\mu}=\int\boldsymbol{\mu}_\om dP(\om)$ be the $T$-invariant Gibbs measure
associated with $T_\om$ and $\phi_\om$.
Namely, $\boldsymbol{\mu}_\om$ has the form $\boldsymbol{\mu}_\om=\textbf{h}_\om(0)\boldsymbol{\nu}_\om(0)$
where $\textbf{h}_\om(0)$ and $\boldsymbol{\nu}_\om(0)$ are members of the (random) RPF triplets
of the family $\cL_0^\om,\,\om\in\Om$, as described in Section \ref{sec3}. Note that
the measure $\boldsymbol{\mu}$ coincides with the Gibbs measures
in the setup of either \cite{MSU} or \cite{Kifer-Thermo}.
Recall that under our conditions (see \cite{Kifer-1998}), the limit 
$\sig^2=\lim_{n\to\infty}\frac1n\text{var}_{\boldsymbol{\mu}_\om(0)}S_n^\om u$ 
exists $P$-a.s. and
it does not depend on $\om$. Moreover, $\sig^2>0$ if and only if 
 $\bar u_\om=u_\om-\boldsymbol{\mu}_\om(u_\om)$ does not admit a coboundary 
representation of the form
\[
\bar u_\om=q_\om-q_{\te\om}\circ T_\om
\]
for some function $q=q(\om,x)\in L^2(\cE,\cF\times\cB,\boldsymbol{\mu})$.
Consider the functions $S_nu:\cE\to\bbR$ given by 
$S_n u=\sum_{j=0}^{n-1}u\circ T^n$.
We first state

\begin{theorem}\label{CLT}
Suppose that assumption \ref{bound ass+Tr Op meas+ass phi u} is satisfied
and that $\boldsymbol{\mu}_\om(u_\om)=\int u_\om d\boldsymbol{\mu}_\om=0$, $P$-a.s. Then the CLT holds 
true, namely for any $r\in\bbR$,
\[
\lim_{n\to\infty}\boldsymbol{\mu}\{(\om,x):n^{-\frac12} S_n u(\om,x)\leq r\}=\frac1{\sqrt{2\pi}\sig}\int_{-\infty}^{r}e^{-\frac{t^2}{2\sig^2}}dt
\]
where when $\sig=0$ the above right hand side is defined to be $1$ if $r\geq 0$ and $0$ if $r<0$.
\end{theorem}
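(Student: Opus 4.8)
The plan is to derive the annealed CLT from the quenched CLT via integration, exploiting the fact that the quenched limiting variance $\sig^2$ is non-random. First I would establish the quenched statement: for $P$-a.e. $\om$, the distribution of $n^{-1/2}S_n^\om u$ under $\boldsymbol{\mu}_\om(0)$ converges weakly to the centered Gaussian $\cN(0,\sig^2)$. This is exactly the setting of the random RPF theorem from \cite{book}: applying the complex RPF theorem to the family $\cL_z^\om$ at $z=it$ near $z=0$ yields, $P$-a.s., an asymptotic expansion of the form $\boldsymbol{\nu}_\om(0)(e^{itS_n^\om u}) \sim \lambda_\om^n(it)\,\boldsymbol{\nu}_{\te^n\om}(0)(\mathbf 1)\cdot(1+o(1))$, where $\lambda_\om^n(it)=\prod_{j=0}^{n-1}\lambda_{\te^j\om}(it)$ is a product of the leading eigenvalues of the perturbed operators. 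Taking logarithms, using the analyticity of $z\mapsto\lambda_\om(z)$ guaranteed by the RPF theorem, the vanishing of the first-order term (because $\boldsymbol{\mu}_\om(u_\om)=0$) and Birkhoff's ergodic theorem applied to the second-order coefficients, one gets $\log\bbE_{\boldsymbol{\mu}_\om(0)}(e^{it n^{-1/2} S_n^\om u}) \to -\tfrac12\sig^2 t^2$ for $P$-a.e. $\om$; this is the characteristic-function form of the quenched CLT. (Strictly, this quenched CLT is already available from \cite{Kifer-1998}, which the excerpt cites, so I would simply invoke it.)

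Next I would pass from quenched to annealed. For fixed $r\in\bbR$ and each $\om$, let $F_n(\om) = \boldsymbol{\mu}_\om\{x : n^{-1/2}S_n u(\om,x)\le r\} = \boldsymbol{\mu}_\om(0)\{x: n^{-1/2}S_n^\om u(x)\le r\}$, noting that the $\om$-slice of $S_n u$ relative to $T$ is $S_n^\om u$ relative to $T_\om$. Then by disintegration $\boldsymbol{\mu}\{n^{-1/2}S_nu\le r\} = \int F_n(\om)\,dP(\om)$. The quenched CLT says $F_n(\om)\to \Phi_{\sig}(r)$ for $P$-a.e. $\om$, where $\Phi_\sig(r)=\frac{1}{\sqrt{2\pi}\sig}\int_{-\infty}^r e^{-t^2/2\sig^2}dt$ when $\sig>0$ (with the stated convention at $\sig=0$), provided $r$ is a continuity point of the limit — which it always is here since $\Phi_\sig$ is continuous in $r$ for $\sig>0$, and for $\sig=0$ one handles $r=0$ separately or notes the limit law is a point mass at $0$ so all $r\ne 0$ are continuity points and the $\sig=0$ convention covers $r=0$. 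Since $0\le F_n(\om)\le 1$, the bounded convergence theorem gives $\int F_n(\om)\,dP(\om)\to\Phi_\sig(r)$, which is the claimed annealed CLT.

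For the degenerate case $\sig=0$: here $n^{-1/2}S_n^\om u \to 0$ in $\boldsymbol{\mu}_\om(0)$-probability for a.e. $\om$ (the coboundary representation forces $S_n^\om u$ to be an a.s. bounded sequence of partial sums of a telescoping cocycle), so $F_n(\om)\to 1$ for $r\ge 0$ and $F_n(\om)\to 0$ for $r<0$, again for $P$-a.e. $\om$; bounded convergence then yields the stated right-hand side. The only subtlety is that $r=0$ is on the boundary and $F_n(\om)$ at $r=0$ need not converge to exactly $1$ pointwise without knowing the cocycle does not accumulate mass exactly at $0$; but since $S_n^\om u = q_\om - q_{\te^n\om}\circ T_\om^n$, we have $n^{-1/2}S_n^\om u\to 0$ in $L^2(\boldsymbol{\mu}_\om(0))$ hence in probability, so $F_n(\om)\to 1$ at $r=0$ as well, consistent with the convention.

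The main obstacle is really just making the quenched-to-annealed passage airtight in the degenerate direction and, more importantly, confirming that the quenched CLT from \cite{book} / \cite{Kifer-1998} holds under precisely Assumption \ref{bound ass+Tr Op meas+ass phi u} with the centering $\boldsymbol{\mu}_\om(u_\om)=0$ — i.e. that the hypotheses of this theorem really do place us in the scope of the random RPF machinery (topological exactness, the pairing property, and the uniform bounds on $n_\om$, $D_\om$, $\|\phi_\om\|_{\al,\xi}$, $\|u_\om\|_{\al,\xi}$, $\gam_\om-1$). Once that identification is made, everything else is soft: disintegration plus bounded convergence. I would emphasize that, unlike the LLT and renewal theorems discussed in the introduction, the CLT needs \emph{no} quantitative (polynomial) control on $\int\|\cL_{it}^{\om,n}\|\,dP(\om)$, because weak convergence of the quenched laws combined with a non-random limit variance is exactly what the bounded convergence theorem converts into annealed convergence — this is precisely the ``integration over $\Gam$'' remark made in the introduction.
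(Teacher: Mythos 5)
Your proposal is correct, but it takes a genuinely different route from the one in the paper. You treat the quenched CLT of \cite{Kifer-1998} as a black box: disintegrate $\boldsymbol{\mu}$ over $\Om$, note the $\om$-fiber CDFs $F_n(\om)$ converge $P$-a.s.\ to the non-random Gaussian CDF, and conclude by bounded convergence. The paper explicitly acknowledges in the text after Theorem \ref{CLT} that this works (``Note that Theorem \ref{CLT} follows from the quenched CLT in \cite{Kifer-1998} (via integration)''), but chooses ``for readers' convenience'' to argue directly with characteristic functions via the L\'evy continuity theorem: using the RPF decomposition one writes $\bbE e^{it\hat S_n}=\int e^{\Pi_{\om,n}(it_n)}\varphi_{\om,n}(it_n)\,dP(\om)$ with $t_n=t/\sqrt n$, then shows $\varphi_{\om,n}(it_n)\to 1$ uniformly (by the exponential convergence in (\ref{Exp Conv final.0})) and controls $e^{\Pi_{\om,n}(it_n)}$ via the Taylor estimates of Lemmas \ref{Press der general}--\ref{Remainder Corollary 1} on sets $\Gam_{n,\ve}=\{|n^{-1}\mathrm{var}_{\nu_\om(0)}S_n^\om u-\sig^2|\le\ve\}$ of $P$-measure $\ge 1-\ve$, closing with a bounded-convergence-style split $2C\ve+\int_{\Gam_{n,\ve}}$. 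The paper's route keeps the argument self-contained in the RPF framework (which it must set up anyway for the LLT and renewal theorems), and it is the natural warm-up for the quantitative manipulations of $\Pi_{\om,n}$ used later; your route is shorter and more transparent once the quenched CLT is granted, and nicely explains the remark that the annealed CLT, unlike the LLT, needs no rate control on $\int\|\cL_{it}^{\om,n}\|\,dP$. One caveat worth flagging: in the degenerate case $\sig=0$, your claim that $F_n(\om)\to 1$ at $r=0$ does not actually follow from $n^{-1/2}S_n^\om u\to 0$ in $\boldsymbol{\mu}_\om$-probability --- convergence in probability to $0$ gives no control on the mass at or below $0$ (a law symmetric about $0$ keeps $F_n(\om)$ near $1/2$). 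This boundary case is not covered by the paper's L\'evy-theorem argument either (weak convergence says nothing at a discontinuity of the limit CDF), so the stated value at $r=0$ should be read as a convention rather than a proven pointwise limit.
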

Note that Theorem \ref{CLT} follows from the quenched 
CLT in \cite{Kifer-1998} (via integration), but for readers' convenience we will prove it using 
the random complex RPF theorem. The proof will be very short and rely on
the arguments in the proof of the quenched Berry-Esseen theorem in \cite{book}. Our main interest in this 
paper is in finer limit theorems such the LLT and renewal theorem, which, in general, 
do not follow from the quenched ones (when exist), but the above CLT is needed in Theorem \ref{LLT--D.S.} below.

In order to obtain LLT's and renewal theorems we will need the following

\begin{assumption}\label{PerPointAss}
The space $\Om$ is a topological space, $\cF$  contains in the corresponding 
Borel $\sig$-algebra and $\te$ has a periodic point, namely there exist $\om_0\in\Om$ and $n_0\in\bbN$ so that $\te^{n_0}\om_0=\om_0$. Moreover, for any compact interval $J$ the maps 
$\om\to\cL_{it}^\om$ are uniformly continuous (with respect to the operator norm $\|\cdot\|_{\al,\xi}$) at the points $\te^{j}\om_0,\,0\leq j<n_0$ when $t$ ranges over $J$.
\end{assumption}
The condition about continuity of $\om\to\cL_{it}^\om$ holds true, for instance, 
when the maps $\om\to \phi_\om, u_\om\in\cH^{\al,\xi}$ are continuous at the 
points $\te^{j}\om_0,\, 0\leq j<n_0$ and $\om\to T_\om$ is either locally constant there or is continuous 
at these points with respect to an appropriate topology. 

Next, for any $n\geq1$ consider the function $V_n:\Om\to\bbR$ given by 
$V_n(\om)=\text{var}_{\boldsymbol{\mu}_\om}S_n^\om u=\boldsymbol{\mu}_\om(S_n^\om\bar u)^2$. 
We will also use the following probabilistic growth type conditions. 
\begin{assumption}\label{theta mixing assumption}
The asymptotic variance $\sig^2$ is positive. Moreover,
there exist $\be>0$ and $c_1,c_2>0$ so that for  any $n\geq1$,
\begin{equation}\label{V-mix}
P\{\om: V_n(\om)\leq c_1n\}\leq \frac{c_2}{n^\be}.
\end{equation}
Furthermore, for any $s>0$ and neighborhoods $B_{\te^{j}\om_0}$ of $\te^j\om_0$, $j=0,1,...,n_0-1$
the set $B_{\om_0,n_0,s}=\bigcap_{i=0}^{sn_0-1}\te^{-i}B_{\te^{i}\om_0}$ satisfies that for any $n\geq1$,
\begin{equation}\label{Neighb mix}
P\{\om: \sum_{j=0}^{n-1}\bbI_{B_{\om_0,n_0,s}}(\te^j\om)\leq cn\}
\leq \frac{d}{n^\be}
\end{equation}
where $c$ and $d$ are positive constants which depend only on
$s$, $\om_0$ and $n_0$ and  $\bbI_B$ is the indicator function of 
a set $B$. 

\end{assumption}
When $\cX$ is a $C^2$-compact Riemanian 
manifold, $T_\om$ are certain $C^2$ random endomorphisms 
and  $e^{-\phi_\om}$ is the corresponding Jacobian then 
$\nu_\om(0)=\textbf{m}$ (see  Theorem 2.2 in \cite{Kifer-Thermo}) , where $\textbf{m}$ is the (normalized) volume measure. 
In these circumstances, in Proposition \ref{Prop1}, we will show that condition (\ref{V-mix}) 
is satisfied when polynomial concentration inequalities of the form (\ref{Poly Vk})
hold true. In fact, the above also holds true when all $T_\om$'s are nonsingualr with respect to some measure 
$\textbf{m}$ (see Proposition \ref{Prop2}) and $\phi_\om=-\ln\big(\frac{d(T_\om)_*\textbf{m}}{d\textbf{m}}\big)$.
When $T_\om$ satisfy certain regularity conditions as function of $\om$, 
in Propositions \ref{Prop3}, we show that conditions (\ref{Poly Vk}) and (\ref{Neighb mix}) are satisfied 
when $\te$ is a (non-uniform) Young tower (see \cite{Young2} and \cite{Chaz}) with one at least one periodic point and polynomial tails. Of course, Young towers are not invertible, but in Section 
\ref{Non Inv Sec} we explain how to obtain results under Assumption \ref{theta mixing assumption}
for non-invertible maps. 
In Propositions \ref{Prop4} and \ref{Prop5} we show that the latter conditions also hold true when $\te$ is a 
Markov shift generated by a Markov chain with positive transition densities satisfying the Doeblin condition. 
We refer the readers' to Section \ref{MixSec} for  precise statements and full details.

As usual (see, for instance, \cite{GH} and  \cite{HenHerve}), in order to present the local limit theorem and the renewal theorem
we will distinguish between two cases. 
Under Assumption \ref{PerPointAss}, we call the case a non-lattice one if the 
function $S_{n_0}^{\om_0}u=\sum_{j=0}^{n-1}u_{\te^j\om_0}\circ T_{\om_0}^j$ is non-arithmetic
(aperiodic) with respect to the map $\tau=T^{n_0}_{\om_0}$ in the classical sense of \cite{HenHerve}, namely the spectral radius (with respect to the norm $\|\cdot\|_{\al,\xi}$) of the operators 
$R_{it}=\cL_{it}^{\om_0,n_0},\,t\not=0$  are strictly less than $1$.
We  call the case  a lattice one if there exists $h>0$ such that 
$P$-a.s. the function $u_\om$ takes values on 
the lattice $h\bbZ=\{hk: k\in\bbZ\}$ and the spectral radius of the operators 
$R_{it},\,t\in(-\frac{2\pi}{h},\frac{2\pi}h)\setminus\{0\}$ are strictly less than $1$. 
We refer the readers to \cite{HenHerve} for conditions equivalent to the above 
lattice and non-lattice conditions.

Next, for any $r>0$ let  $C_{r\downarrow}(\bbR)$ be the space of all continuous functions 
$g:\bbR\to\bbC$ so that $\lim_{x\to\infty}x^r g(x)=0$. 

\begin{theorem}[Local limit theorem]\label{LLT--D.S.}
Suppose that Assumptions \ref{bound ass+Tr Op meas+ass phi u}, \ref{PerPointAss} and \ref{theta mixing assumption} hold true, where in the last assumption we require that $\beta>\frac12$.
Moreover, assume that $\boldsymbol{\mu}_\om(0)(u_\om)=\int u_\om d\boldsymbol{\mu}_\om(0)=0$, $P$-a.s. 
Then for any $g\in C_{2\downarrow}(\bbR)$,
\[
\lim_{n\to\infty}\sup_{a\in\cT_h}\big|\sig\sqrt{2\pi n}\boldsymbol{\mu}\big(g (S_n u -a)\big)-\ka_h(g) e^{-\frac{a^2}{2n\sig^2}}\big|=0
\] 
where in the lattice case $\ka_h$ is the measure assigning unit mass to each member
of the lattice $\cT_h=h\bbZ$, while in the non-lattice
case we set $h=0$ and take $\ka_0$ and $\cT_0$ to be the Lebesgue measure and the real line, respectively. 
\end{theorem}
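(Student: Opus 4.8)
The plan is to follow the classical Fourier-analytic route to the local limit theorem, but with the key analytic input — polynomial decay of the integrated operator norms — supplied by an annealed periodic-point argument rather than by spectral theory of a single operator. Fix $g\in C_{2\downarrow}(\bbR)$ and let $\hat g$ denote its Fourier transform; the decay assumption on $g$ guarantees $\hat g$ is continuous and integrable with enough decay to justify the manipulations below (in the lattice case one replaces the Fourier transform by Fourier series on $(-\pi/h,\pi/h)$ and sums over $a\in h\bbZ$). Writing $\boldsymbol{\mu}(g(S_nu-a))$ via Fourier inversion and using the disintegration $\boldsymbol{\mu}=\int\boldsymbol{\mu}_\om\,dP(\om)$ together with the identity $\boldsymbol{\mu}_\om(e^{itS_n^\om u})=\boldsymbol{\nu}_{\te^n\om}(0)\big(\cL_{it}^{\om,n}\textbf{h}_\om(0)\big)$, one obtains
\[
\sig\sqrt{2\pi n}\,\boldsymbol{\mu}\big(g(S_nu-a)\big)=\frac{\sig\sqrt n}{\sqrt{2\pi}}\int_{\bbR}\hat g(t)\,e^{-ita}\int_\Om \boldsymbol{\nu}_{\te^n\om}(0)\big(\cL_{it}^{\om,n}\textbf{h}_\om(0)\big)\,dP(\om)\,dt .
\]
I would then split the $t$-integral into the "local" regime $|t|\le\delta/\sqrt n$, an "intermediate" regime $\delta/\sqrt n\le|t|\le\delta$, and the "far" regime $|t|\ge\delta$, for a small fixed $\delta>0$ (in the lattice case the far regime is the compact set $|t|\in[\delta,\pi/h]$).

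In the local regime I rescale $t\mapsto t/\sqrt n$ and use the random complex RPF theorem from \cite{book}: for $|t|$ small, $\cL_{it}^{\om,n}$ has the leading contribution $\lambda_{it}^{\om,n}\textbf{h}_{\te^n\om}(it)\otimes\boldsymbol{\nu}_{\te^n\om}(it)$, where the top eigenvalue product satisfies $\lambda_{it/\sqrt n}^{\om,n}=\exp\big(-\tfrac12\sig^2 t^2+o(1)\big)$ uniformly (using $\boldsymbol{\mu}_\om(u_\om)=0$ and the definition of $\sig^2$), and the projections converge to those at $t=0$. Dominated convergence, using the CLT of Theorem \ref{CLT} to control the error terms and the uniform bound $\|\cL_{it}^{\om,n}\|_{\al,\xi}\le B(1+|t|)$, yields that this piece converges to $\hat g(0)\,e^{-a^2/2n\sig^2}$ after the $\sqrt n$ normalization — which is exactly $\ka_h(g)e^{-a^2/2n\sig^2}$ in the non-lattice case, and the analogous computation with the lattice sum gives the $\ka_h$ factor in the lattice case. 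In the intermediate regime one uses the quenched exponential contraction of $\lambda_{it}^{\om,n}$ away from $t=0$ (the $-\tfrac12\sig^2t^2$ term dominates) together with $\beta>\tfrac12$ to push through the $\sqrt n$ factor; this is essentially the argument from the quenched Berry–Esseen/LLT in \cite{book}, integrated over $\om$.

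The genuinely new and hardest step is the far regime: I must show $\sqrt n\int_{|t|\ge\delta}|\hat g(t)|\int_\Om\|\cL_{it}^{\om,n}\|_{\al,\xi}\,dP(\om)\,dt\to0$. The obstacle is that the quenched decay rate of $\|\cL_{it}^{\om,n}\|_{\al,\xi}$ is not uniform in $\om$. The strategy, using Assumptions \ref{PerPointAss} and \ref{theta mixing assumption}, is the annealed periodic-point argument sketched in the introduction: fix a compact $J\subset\bbR\setminus\{0\}$ (in the non-lattice case $J$ can be taken to exhaust $\{|t|\ge\delta\}$ after a covering argument using $\hat g$'s decay; in the lattice case $J=\{|t|\in[\delta,\pi/h]\}$ is already compact). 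By the non-lattice/lattice hypothesis, $\cL_{it}^{\om_0,n_0}$ has spectral radius $<1$ for $t\in J$, so some iterate $\cL_{it}^{\om_0,mn_0}$ has operator norm $\le q<1$ uniformly on $J$; by the uniform continuity of $\om\mapsto\cL_{it}^\om$ at the orbit of $\om_0$ (Assumption \ref{PerPointAss}), there are neighborhoods $B_{\te^j\om_0}$ such that the same bound (with $q$ replaced by some $q'<1$) holds whenever the orbit of $\om$ stays in $B_{\om_0,n_0,s}$ for a block of length $sn_0$. Then on the set $\Gam_n$ where $\sum_{j<n}\bbI_{B_{\om_0,n_0,s}}(\te^j\om)\ge cn$, one chains these contractions along the good blocks — interleaved with the trivial bound $\|\cL_{it}^\om\|_{\al,\xi}\le B(1+|t|)$ on the bad stretches — to get $\|\cL_{it}^{\om,n}\|_{\al,\xi}\le C\theta^n$ for some $\theta<1$ and $\om\in\Gam_n$, while $1-P(\Gam_n)\le d n^{-\beta}$ by \eqref{Neighb mix}. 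On $\Om\setminus\Gam_n$ one uses the polynomial-in-$t$ bound combined with the decay of $\hat g$; here the $\beta>\tfrac12$ requirement is what makes $\sqrt n\cdot n^{-\beta}\to0$. I expect the bookkeeping of the block decomposition — and in particular making the contraction constant $\theta$ independent of $t\in J$ and of which block one is in — to be the main technical obstacle, but it is precisely the "annealed version of the periodic point approach" from \cite{book}, Chapter 7, adapted here.
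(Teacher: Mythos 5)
Your proposal follows essentially the same route as the paper: the paper's (very terse) proof reduces to verifying Assumptions 2.2.1 and 2.2.2 of \cite{book} (which encapsulate exactly the Fourier-regime estimates you spell out), and does so via the bound $|\bbE e^{itS_n}|\le 1-P(\Gam)+\|\bbI_\Gam(\om)\|\cA_{it}^{\om,n}\|_{\al,\xi}\|_{L^\infty}$ with $\Gam=\Gam_n$ for small $t$ (variance-growth estimate from condition (\ref{V-mix})) and $\Gam=\Del_n$ for $t$ bounded away from zero (the periodic-point chaining of Lemma \ref{Del n Lemmma} from condition (\ref{Neighb mix})), with $\beta>\tfrac12$ killing the polynomial probability defect against the $\sqrt{n}$ normalization, plus Theorem \ref{CLT} for the local regime. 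Your three-regime decomposition and the block-chaining argument for the far regime match the paper's $\Gam_n$/$\Del_n$ split and the proof of Lemma \ref{Del n Lemmma} exactly, so the mathematical content is the same, only reconstructed from first principles rather than delegated to \cite{book}.
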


\begin{theorem}[Renewal theorem]\label{renewal-D.S.}
Suppose that Assumptions \ref{bound ass+Tr Op meas+ass phi u}, \ref{PerPointAss} and \ref{theta mixing assumption} hold true,
where in the last assumption we require that $\beta>1$.
Moreover, assume that
 $\boldsymbol{\mu}_\om(0)(u_\om)=\gam>0$ does not depend on $\om$.
Let $f:\cE\to\bbR$  be a positive function so that $f_\om\in\cH_{\al,\xi}^\om$,
$\boldsymbol{\mu}_\om(0)(f_\om)=\boldsymbol{\mu}(f)$ does not depend on $\om$ and that $\|f_\om\|_{\al,\xi}\in L^p(\Om,\cF,P)$ for some $1<p\leq\infty$ so that $\be(1-\frac1p)>1$. For any Borel measurable set $B\subset\bbR$
set  
\[
U(B)=U_{\mu,f}(B)=\sum_{n\geq1}\boldsymbol{\mu}(f\bbI_{B}(S_n u))
\]
where $\bbI_B$ is the indicator function of the set $B$.
Then in both lattice and non-lattice
cases $U$ is a Radon measure on $\bbR$ so that $\int |g|dU<\infty$ for any
$C_{4\downarrow}(\bbR)$. Moreover, for any function $g\in C_{4\downarrow}(\bbR)$, 
\begin{equation}\label{renewal equations}
\lim_{a\to-\infty}U(g_a)=0\,\,\,\text{ and }\,\,\,
\lim_{a\to\infty}U(g_a)=\frac{\boldsymbol{\mu}(f)}{\gam}\ka_h(g)
\end{equation} 
where $g_a(x)=g(x-a)$, and in the lattice case $\ka_h$ is the measure assigning unit mass to each member of the lattice $h\bbZ$, while in the non-lattice case we set $h=0$ and take $\ka_0$ to  be the Lebesgue measure.
\end{theorem}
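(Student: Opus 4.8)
\emph{Plan.} The proposal is to run the Fourier (operator--renewal) method of \cite{GH,HenHerve}, with the spectral theory of a single transfer operator replaced by the random complex RPF theorem of \cite{book}, reusing the integrated operator--norm estimates developed for Theorem~\ref{LLT--D.S.}. Write $R_n(t)=\boldsymbol{\mu}(fe^{itS_nu})$. I would first establish the preliminary claim that $U$ is a Radon measure and $\int|g|\,dU<\infty$ for $g\in C_{4\downarrow}(\bbR)$: from the uniform bound $\boldsymbol{\mu}(S_nu\in[k,k+1])\le Cn^{-1/2}$ (a by-product of the proof of Theorem~\ref{LLT--D.S.}), the concentration of $S_nu$ around $\gam n$ with $\gam>0$, and the polynomial decay of $g$, one gets $\sup_a\int|g_a|\,dU\le C\|f\|_\infty\sup_x(1+|x|)^4|g(x)|$ by splitting the sum over $n$ according to the size of $n$ relative to $a/\gam$. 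A standard density argument then reduces the two limits asserted in the theorem to test functions $g$ whose Fourier transform $\widehat g$ is smooth and supported in a fixed interval $[-M,M]$ (in the lattice case inside $(-\tfrac{2\pi}{h},\tfrac{2\pi}{h})$, which is allowed since $t\mapsto R_n(t)$ is $\tfrac{2\pi}{h}$--periodic because $S_nu\in h\bbZ$ $\boldsymbol{\mu}$--a.s.). For such $g$, Fourier inversion gives $\boldsymbol{\mu}(fg(S_nu-a))=\tfrac1{2\pi}\int_{-M}^{M}\widehat g(t)e^{-ita}R_n(t)\,dt$, and $U(g_a)$ is obtained by summing over $n$.

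Next, away from the origin: exactly as in the proof of Theorem~\ref{LLT--D.S.}, Assumptions~\ref{PerPointAss} and~\ref{theta mixing assumption} (through (\ref{Neighb mix})), the random RPF theorem, and the annealed periodic--point argument would produce, for every compact $J\subset\bbR\setminus\{0\}$, sets $\Gam_n(J)$ with $1-P(\Gam_n(J))=O(n^{-\be})$ on which $\|\cL_{it}^{\om,n}\|_{\al,\xi}$ is suitably small, uniformly for $t\in J$; together with $\|f_\om\|_{\al,\xi}\in L^p$ and H\"older's inequality this yields $\sup_{t\in J}|R_n(t)|\le C_J\,n^{-\be(1-1/p)}$ with $\be(1-1/p)>1$. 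Hence $\sum_{n\ge1}R_n(t)$ converges, uniformly on $J$, to a continuous function, so that $\sum_n\int_{\del\le|t|\le M}\widehat g(t)e^{-ita}R_n(t)\,dt=\int_{\del\le|t|\le M}\widehat g(t)e^{-ita}\big(\sum_nR_n(t)\big)\,dt\to0$ as $a\to\pm\infty$ for any $0<\del<M$ by the Riemann--Lebesgue lemma, so only a neighbourhood of $t=0$ contributes to the limit.

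The heart of the argument is the behaviour near $t=0$. For $|t|\le\del$ the random complex RPF theorem of \cite{book} gives, with $z=it$, a decomposition $\cL_{it}^{\om,n}=\la_{it}^{(n)}(\om)\,\boldsymbol{\nu}_{it,\om}(\cdot)\,\textbf{h}_{it,\te^n\om}+\cN_{it}^{\om,n}$ with $\la_{it}^{(n)}(\om)=\prod_{j=0}^{n-1}\la_{it,\te^j\om}$, the RPF data depending analytically on $z$, and $\|\cN_{it}^{\om,n}\|_{\al,\xi}$ geometrically small in $n$. Using the identity $\int f_\om e^{itS_n^\om u}\textbf{h}_\om\,d\boldsymbol{\nu}_\om=\la_0^{(n)}(\om)^{-1}\int\cL_{it}^{\om,n}(f_\om\textbf{h}_\om)\,d\boldsymbol{\nu}_{\te^n\om}$ and integrating over $\om$ would give $R_n(t)=\int_\Om\rho_n(\om,t)A(\om,t)B(\te^n\om,t)\,dP(\om)+E_n(t)$, where $\rho_n(\om,t)=\prod_{j=0}^{n-1}\la_{it,\te^j\om}/\la_{0,\te^j\om}$, $A(\om,0)=\boldsymbol{\mu}(f)$, $B(\om,0)=1$, both analytic, and $\sum_nE_n(t)$ is a bounded continuous function. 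The crucial point is that $\tfrac{d}{dz}\log\la_{z,\om}\big|_{z=0}=\boldsymbol{\mu}_\om(u_\om)=\gam$ is \emph{independent of $\om$} by hypothesis, while $\tfrac{d^2}{dz^2}\log\la_z^{(n)}(\om)\big|_{z=0}=V_n(\om)=\text{var}_{\boldsymbol{\mu}_\om}S_n^\om u$; hence $\log\rho_n(\om,t)=in\gam t-\tfrac12V_n(\om)t^2+nO(t^3)$, so the oscillatory phase $e^{in\gam t}$ factors out uniformly in $\om$, and $|\rho_n(\om,t)|\le e^{-\frac12V_n(\om)t^2+Cn|t|^3}$ together with (\ref{V-mix}) and $\be>1$ (which make $\sum_nn^{-\be}<\infty$) controls the tail in $n$. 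Setting $a_n(t)=\int_\Om A(\om,t)B(\te^n\om,t)e^{-\frac12V_n(\om)t^2+\cdots}\,dP(\om)$, so that $\sum_nR_n(t)=\sum_ne^{in\gam t}a_n(t)+\sum_nE_n(t)$, an iterated summation by parts --- exploiting that $n\mapsto V_n(\om)$ has bounded increments, hence $n\mapsto a_n(t)$ is slowly varying on the scale $|t|^{-2}$, which creates cancellation against $e^{in\gam t}$, together with $a_n(0)=\boldsymbol{\mu}(f)$ for all $n$ --- should give $\sum_{n\ge1}R_n(t)=\dfrac{\boldsymbol{\mu}(f)}{-i\gam t}+G(t)$ with $G$ continuous near $0$ (and, by the previous step, continuous on $(-M,M)\setminus\{0\}$, resp.\ on $(-\tfrac{2\pi}{h},\tfrac{2\pi}{h})\setminus\{0\}$), and the partial sums $\sum_{n=1}^{N}R_n(t)$ uniformly $O(|t|^{-1})$ on $0<|t|\le\del$.

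Substituting $\sum_nR_n(t)=\dfrac{\boldsymbol{\mu}(f)}{-i\gam t}+G(t)$ into the integral over $[-\del,\del]$ and using the classical distributional identity $\tfrac1{-i(t+i0)}=\text{p.v.}\,\tfrac it+\pi\del_0$ --- whose principal--value and $\del$--parts contribute equally to $\tfrac1{2\pi}\int\widehat g(t)e^{-ita}\tfrac1{-i\gam t}\,dt$, giving $\tfrac1\gam\int g$ as $a\to+\infty$ and $0$ as $a\to-\infty$ --- together with Riemann--Lebesgue for the $G$--part and the away-from-zero contribution, would yield the two limits of the theorem with constant $\tfrac{\boldsymbol{\mu}(f)}{\gam}\ka_h(g)$; the lattice case is identical, on $(-\tfrac{2\pi}{h},\tfrac{2\pi}{h})$ and with the usual lattice bookkeeping. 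The main obstacle will be the near-origin step: in the deterministic or i.i.d.\ setting $\sum_nR_n(t)$ is governed by a single perturbed eigenvalue $\la(t)$ through $(1-\la(t))^{-1}$ and the renewal expansion is immediate, whereas here there is no such eigenvalue and one must control the $\om$--integral of the non-i.i.d.\ products $\rho_n(\om,t)=\prod_{j<n}\la_{it,\te^j\om}/\la_{0,\te^j\om}$; after extracting the $\om$--independent phase $e^{in\gam t}$ (which is exactly where the hypothesis that $\gam$ is constant enters) one has to show that the residual $\om$--dependence, carried by $V_n(\om)$ and $B(\te^n\om,t)$, is mild enough for the summation by parts to leave a \emph{regular} remainder rather than a second singular term --- this is where the variance tail bound (\ref{V-mix}) and the requirement $\be>1$ are indispensable, and where the estimates on $\int\|\cL_{it}^{\om,n}\|_{\al,\xi}\,dP(\om)$ from the proof of Theorem~\ref{LLT--D.S.} are reused.
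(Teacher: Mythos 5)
Your outline correctly identifies the overall architecture — reduce to test functions with compactly supported Fourier transform, use Fourier inversion, control $|t|$ bounded away from $0$ via the periodic-point sets $\Del_n(J)$ from Lemma~\ref{Del n Lemmma} together with H\"older and $\be(1-\tfrac1p)>1$, and extract a singular $\tfrac{\boldsymbol{\mu}(f)}{-i\gam t}$ term near $t=0$ using that $\gam=\la'_\om(0)$ is constant in $\om$. The away-from-origin piece and the Radon-measure preliminaries are essentially what the paper does (though the paper works with the Abel sums $U_\rho$, $\rho<1$, throughout and passes to $\rho\to1$ only at the end, rather than with the raw partial sums $\sum_{n\le N}R_n(t)$; this is a minor point). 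But the heart of the argument — why the residual $\om$-dependence near $t=0$ is \emph{regular} rather than a second singular term — is precisely where you stop, and the mechanism you propose (summation by parts against $e^{in\gam t}$ exploiting slow variation of $a_n(t)$) does not obviously close. Indeed, the increments $a_n(t)-a_{n+1}(t)=O(t^2)$ over the $\sim t^{-2}$ relevant values of $n$ give a contribution $O(|t|^{-1})\cdot O(t^2)\cdot O(t^{-2})=O(|t|^{-1})$, i.e.\ potentially a second $1/|t|$ singularity; one would need genuine cancellation, not just size estimates, to rule this out, and that cancellation is not supplied in your sketch. You flag this yourself as ``the main obstacle,'' but it is not a detail to fill in — it is the theorem.

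The paper resolves this by a different and cleaner device. Working with $f_\rho^\om(t)=\sum_{n\ge1}\rho^{n-1}\la_{\om,n}(it)\mu_{\te^n\om}(h_{\te^n\om}(it))$, it exploits the exact telescoping identity $\rho\la_{\te^{-1}\om}(it)f_\rho^\om(t)=f_\rho^{\te^{-1}\om}(t)-\la_{\te^{-1}\om}(it)\mu_\om(h_\om(it))$, giving
\[
f_\rho^\om(t)=\frac{\la_{\te^{-1}\om}(it)\mu_\om(h_\om(it))}{1-\rho\la_{\te^{-1}\om}(it)}+\frac{\Del_\rho^\om(t)}{1-\rho\la_{\te^{-1}\om}(it)},\qquad \Del_\rho^\om(t)=f_\rho^\om(t)-f_\rho^{\te^{-1}\om}(t).
\]
The first summand is the random analogue of $(1-\rho\la(it))^{-1}$ and produces the $\tfrac{\boldsymbol{\mu}(f)}{\gam}\ka_h(g)$ limit via the classical $J(a,\rho)$ computation. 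The decisive point, which substitutes for your proposed summation by parts, is that $\bbE_P\Del_\rho^\om(t)=0$ by $\te$-invariance of $P$; together with the quantitative estimates (\ref{Good Exp1})--(\ref{Good Exp3}) (which use the sets $\Gam_n$ from (\ref{First Gamma props1})--(\ref{First Gamma props2}), $\boldsymbol{\mu}_\om(h'_\om(0))=0$, the boundedness of $\la_{\om,n}(it)$, and $\be(1-\tfrac1p)>1$) this shows the contribution of the second summand, $G_\rho(t)$, stays bounded and has a pointwise limit as $\rho\to1$. That exact cancellation under $\bbE_P$ is what prevents a second singularity at $t=0$, and it is precisely what your proposal is missing. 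I would therefore say: correct skeleton, correct identification of where $\gam$-constancy and $\be>1$ enter, but a genuine gap at the one step that distinguishes this theorem from the LLT.
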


\subsection{The non-invertible case}\label{Non Inv Sec}
Suppose that $(\Om,\cF,P,\te)$ is ergodic and not necessarily invertible.
Henceforth, we will assume  that all the maps $T_\om$ are non-singular with respect to some
probability measure $\textbf{m}$ on $\cX$ so that $\text{supp }\textbf{m}=\cX$, that  $\phi_\om=-\ln\big(\frac{d(T_\om)_*\textbf{m}}{d\textbf{m}}\big)$ 
and that $P$-a.s. the map $T_\om$ preserves a measure $\ka$ of the form $\ka=\bar h \textbf{m}$, where $\bar h$ is some 
continuous nonnegative function on $\cX$ which does not depend on $\om$. 
The latter condition means that the skew product map $T(\om,x)=(\te\om,T_\om x)$
preserves the product measure  $\boldsymbol{\mu}=P\times\ka$, whose disintegrations $\boldsymbol{\mu}_\om$ equal $\ka$.
Existence of such product measures was studied in \cite{Aimino} (see also references therein).
Note that in \cite{Aimino} the authors considered independent maps $T_\om,T_{\te\om},T_{\te^2\om},...$, but 
the $T$-invariance of $\boldsymbol{\mu}$ defined above depends only on the distribution of $T_\om$,
and not on the dependencies between $T_{\te^j\om}$'s.

\begin{theorem}\label{Non invert LimThms}
The limit 
$\sig^2=\lim_{n\to\infty}\frac1n\text{var}_{\ka}S_n^\om u$ 
exists $P$-a.s. and
it does not depend on $\om$. Moreover, $\sig^2>0$ if and only if 
 $\bar u_\om=u_\om-\ka(u_\om)$ does not admit a coboundary 
representation of the form
\[
\bar u_\om=q_\om-q_{\te\om}\circ T_\om
\]
for some function $q=q(\om,x)\in L^2(\cE,\cF\times\cB,\boldsymbol{\mu})$.
Moreover, the CLT, the local limit theorem and the renewal theorem stated in Theorems 
\ref{CLT}, \ref{LLT--D.S.} and \ref{renewal-D.S.} hold true.
\end{theorem}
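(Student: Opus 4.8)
The plan is to reduce the non-invertible case to the invertible setting already treated in Theorems \ref{CLT}, \ref{LLT--D.S.} and \ref{renewal-D.S.} by passing to the natural extension of $(\Om,\cF,P,\te)$. Concretely, I would set $\widehat\Om=\{\hat\om=(\om_j)_{j\in\bbZ}:\te\om_j=\om_{j+1}\}$ with the shift $\widehat\te$, the projection $\pi(\hat\om)=\om_0$, and the unique $\widehat\te$-invariant measure $\widehat P$ with $\pi_*\widehat P=P$; this is an invertible ergodic MPS. I would then define $\widehat T_{\hat\om}=T_{\pi\hat\om}$, $\widehat\phi_{\hat\om}=\phi_{\pi\hat\om}$, $\widehat u_{\hat\om}=u_{\pi\hat\om}$, so that the transfer operators $\widehat\cL_z^{\hat\om}=\cL_z^{\pi\hat\om}$ satisfy all hypotheses in Assumption \ref{bound ass+Tr Op meas+ass phi u} (these are pointwise-in-$\om$ conditions that survive the lift), and the random RPF triplet from Section \ref{sec3} is simply the pullback of the original one, with $\widehat{\textbf h}_{\hat\om}(0)=\bar h$ and $\widehat{\boldsymbol\nu}_{\hat\om}(0)=\textbf{m}$ constant in $\hat\om$ (since $T_\om$ preserves $\ka=\bar h\,\textbf{m}$ and $\phi_\om=-\ln\frac{d(T_\om)_*\textbf{m}}{d\textbf{m}}$ forces $\cL_0^\om\bar h=\bar h$ and $(\cL_0^\om)^*\textbf{m}=\textbf{m}$). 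Hence the lifted Gibbs measure $\widehat{\boldsymbol\mu}=\int\widehat{\boldsymbol\mu}_{\hat\om}\,d\widehat P$ has disintegrations $\widehat{\boldsymbol\mu}_{\hat\om}=\bar h\,\textbf{m}=\ka$, and it projects under $(\hat\om,x)\mapsto(\pi\hat\om,x)$ to $\boldsymbol\mu=P\times\ka$; moreover $S_n\widehat u(\hat\om,x)$ depends on $\hat\om$ only through $\om_0$ and equals $S_nu(\pi\hat\om,x)$, so the distributions of $S_nu$ under $\boldsymbol\mu$ and of $S_n\widehat u$ under $\widehat{\boldsymbol\mu}$ coincide.

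The existence and $\om$-independence of $\sig^2=\lim_n\frac1n\mathrm{var}_\ka S_n^\om u$, together with the coboundary dichotomy, I would obtain exactly as in the invertible case applied to the lifted system (this is the statement quoted before Theorem \ref{CLT}, now for $\widehat\te$): a coboundary $\widehat{\bar u}_{\hat\om}=\widehat q_{\hat\om}-\widehat q_{\widehat\te\hat\om}\circ\widehat T_{\hat\om}$ with $\widehat q\in L^2(\widehat{\boldsymbol\mu})$ descends to a coboundary for $\bar u_\om$ with $q\in L^2(\boldsymbol\mu)$ because a standard martingale/reverse-martingale argument (or the fact that the asymptotic variance is the same in the natural extension) shows $\widehat q$ may be taken measurable with respect to the past $\sigma$-algebra, hence a function of $(\om_0,x)$; conversely a descended coboundary lifts trivially. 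Then the CLT follows by applying Theorem \ref{CLT} to the lifted data and pushing forward, since $\boldsymbol\mu\{n^{-1/2}S_nu\le r\}=\widehat{\boldsymbol\mu}\{n^{-1/2}S_n\widehat u\le r\}$.

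For the LLT and the renewal theorem the point is that Assumptions \ref{PerPointAss} and \ref{theta mixing assumption} for the lifted system are \emph{implied} by the corresponding hypotheses one imposes on $\te$ (for Young towers and Markov shifts these are verified in Propositions \ref{Prop3}--\ref{Prop5}). A periodic point $\om_0$ of $\te$ with $\te^{n_0}\om_0=\om_0$ lifts to the periodic point $\widehat\om_0=(\dots,\om_0,\te\om_0,\dots,\te^{n_0-1}\om_0,\om_0,\dots)$ of $\widehat\te$; the continuity of $\om\mapsto\cL_{it}^\om$ at $\te^j\om_0$ pulls back through $\pi$ to continuity of $\hat\om\mapsto\widehat\cL_{it}^{\hat\om}$ at $\widehat\te^j\widehat\om_0$; and since $V_n(\om)$ and the sets $B_{\om_0,n_0,s}$ are defined through finite forward orbits of $\te$, the inequalities \eqref{V-mix} and \eqref{Neighb mix} for $\widehat P$ are literally the inequalities for $P$ composed with $\pi$ (the forward orbit of $\hat\om$ under $\widehat\te$ projects to the forward orbit of $\pi\hat\om$ under $\te$, and $\pi_*\widehat P=P$). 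The lattice/non-lattice dichotomy is intrinsic to the operators $R_{it}=\cL_{it}^{\om_0,n_0}$, which are unchanged by the lift. Therefore Theorems \ref{LLT--D.S.} and \ref{renewal-D.S.} apply verbatim to $(\widehat\Om,\widehat P,\widehat\te,\widehat T,\widehat\phi,\widehat u,\widehat f)$ with the hypothesis $f_\om\in\cH^\om_{\al,\xi}$, $\ka(f_\om)=\boldsymbol\mu(f)$ constant, $\|f_\om\|_{\al,\xi}\in L^p$, and the conclusions transfer back by the identity of distributions of $S_nu$ under $\boldsymbol\mu$ and $S_n\widehat u$ under $\widehat{\boldsymbol\mu}$, noting $\boldsymbol\mu(f\,\bbI_B(S_nu))=\widehat{\boldsymbol\mu}(\widehat f\,\bbI_B(S_n\widehat u))$ so that $U_{\mu,f}=U_{\widehat\mu,\widehat f}$. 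The main obstacle, and the one point deserving a careful argument rather than a one-line remark, is the coboundary descent in the variance dichotomy — ensuring the transfer function $\widehat q$ can be chosen to depend only on $(\om_0,x)$ — but this is handled by the usual natural-extension argument since the asymptotic variance is an invariant of the base that is preserved under the extension.
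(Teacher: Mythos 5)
Your proposal follows the same route as the paper: pass to the natural invertible extension $(\hat\Om,\hat\cF,\hat P,\hat\te)$, lift $T,\phi,u$ by composing with $\pi$, identify the lifted Gibbs measure with $\hat P\times\ka$, observe that the distributions of $S_nu$ under $\boldsymbol\mu$ and of $S_n\widehat u$ under $\widehat{\boldsymbol\mu}$ coincide, and transfer the hypotheses of Assumptions \ref{PerPointAss} and \ref{theta mixing assumption} to the extension (periodic point lifts, continuity pulls back through $\pi$, the sets in \eqref{V-mix} and \eqref{Neighb mix} are cylinders in the forward coordinates).

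The one place where your write-up is too quick is the identification $\widehat{\textbf h}_{\hat\om}(0)=\bar h$. You assert this follows because $\cL_0^\om\bar h=\bar h$ and $(\cL_0^\om)^*\textbf{m}=\textbf{m}$, i.e. "the RPF triplet is the pullback of the original one." But there is no RPF triplet to pull back — the random RPF theorem from \cite{book} is for invertible $\te$ — and in the extension, $\widehat{\textbf h}_{\hat\zeta}(0)$ is produced as the limit of $\cL_0^{\hat\te^{-n}\hat\zeta,n}\textbf{1}/\boldsymbol{\la}_{\hat\te^{-n}\hat\zeta,n}(0)$, which a priori depends on the entire past of $\hat\zeta$, not just on $\zeta_0$. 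The fact that $\bar h$ is \emph{a} fixed density for each $\cL_0^\om$ does not by itself force it to coincide with the random equilibrium density coming out of the RPF machinery. The paper closes this gap by (i) deriving $\cL_0^{\hat\zeta}\bar h=\bar h$ from the $T$-invariance of $P\times\ka$, (ii) feeding $q=\bar h$ into the exponential convergence \eqref{Exp Conv final.0} (extended to continuous $q$ as in Prop.\ 3.19 of \cite{MSU}) to get $\widehat{\textbf h}_{\hat\te^n\hat\zeta}(0)\to\bar h$ pointwise, and (iii) a Birkhoff ergodic-average argument plus compactness of $\cX$ and continuity of both functions to upgrade the forward-orbit convergence to the $\hat P$-a.s.\ equality $\widehat{\textbf h}_{\hat\zeta}(0)=\bar h$. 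You should include this step (or invoke an explicit uniqueness statement for the random RPF triplet) rather than treating it as immediate.

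On the coboundary descent you correctly flag the subtlety — that the transfer function $\widehat q$ produced by the invertible dichotomy lives on $\hat\Om\times\cX$ and must be shown to depend only on $(\zeta_0,x)$ — and the paper itself does not spell this out. Your suggested fix (invariance of the asymptotic variance under the extension, together with a conditional-expectation/martingale argument to project $\widehat q$ onto the $\sigma$-algebra generated by $(\zeta_0,x)$) is the standard way to handle it and is consistent with the paper's implicit reliance on the fact that $\sig^2$ is an invariant of the one-sided system. Overall the proposal is sound; just fill in the identification of $\widehat{\textbf h}_{\hat\om}(0)$ as above.
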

We remark that the condition that $\boldsymbol{\mu}_\om(u_\om)=\ka(u_\om)$ does not depend on $\om$
is satisfied when $u_\om$  is replaced with $\frac{u_\om}{\ka(u_\om)}$. Since $\ka(u_\om)$ is H\"older
continuous in $\om$ when $u_\om$ is, all the continuity Assumptions from \ref{PerPointAss}
still hold true after this replacement. 
Theorem \ref{Non invert LimThms} is proved  by applying Theorems 
\ref{CLT}, \ref{LLT--D.S.} and \ref{renewal-D.S.} with the natural invertible extension of
$(\Om,\cF,P,\te)$, see Section \ref{Reduction}. Applying these theorems yields results
even when $T_\om$'s do not preserve the same absolutely continuous measure $\ka$, but then 
the conditions for the limit theorems to hold true (derived from the results in the extension) 
are not natural (see Remark \ref{Reduction remark}). We also note that the situation
described in Section \ref{Non-Iden} makes it possible to consider the case when 
$\phi_\om=-\ln\big(\frac{d(T_\om)_*\textbf{m}}{d\textbf{m}}\big)$  is only H\"older continuous on some
pieces of $\cX$ (e.g. when $\cX$ is the unit circle and $T_\om(x)=x^{m_\om}$ for some $m_\om\in\bbN$).

\section{Random complex RPF theorem and thermodynamic formalism constructions}\label{sec3}\setcounter{equation}{0}

Suppose that $(\Om,\cF,P,\te)$ is invertible.
In this section we  mainly collect  several results from \cite{book} which will be used
in the proofs of the results stated in Section \ref{sec2}.
Let $(\boldsymbol{\la}_\om(z),\boldsymbol{h}_\om(z),\boldsymbol{\nu}_\om(z))$ be the (random) RPF triplet
of the family $\cL_z^\om,\,\om\in\Om$, obtained in Corollary 5.4.2 in \cite{book}. Let the probability measure 
$\boldsymbol{\mu}_\om=\boldsymbol{\mu}_\om(0)$ be given by $\boldsymbol{\mu}_\om=\boldsymbol{h}_\om(0)d\boldsymbol{\nu}_\om(0)$. 
Then the measure $\boldsymbol{\mu}=\boldsymbol{\mu}(0)=\int\boldsymbol{\mu}_\om dP(\om)$ is $T$-invariant.
We refer the readers' to \cite{MSU} and \cite{Kifer-Thermo} for important properties of 
these measures (in the setups considered there).
Consider the (transfer) operator $\cA_z^\om$ given by $\cA_z^\om g=\frac{\cL_z^\om(g\boldsymbol{h}_\om(0))}{\boldsymbol{\la}_{\om}(0)\boldsymbol{h}_{\te\om}(0)}$,  namely the transfer operator generated by 
$T_\om$ and the function $\phi_\om+\ln\textbf{h}_\om(0)-\ln\textbf{h}_{\te\om}\circ T_\om-\ln\boldsymbol{\la}_\om(0)+zu_\om$.
Then the RPF theorem stated as Corollary 5.4.2 in \cite{book} holds true also 
for $\cA_z^\om$, as stated in the following

\begin{theorem}\label{RPF rand T.O. general}
There is a (bounded) neighborhood $U\subset\bbC$ of $0$ so that $P$-a.s. 
for any $z\in U$ there exists a triplet
$\la_\om(z)$, $h_\om(z)$ and $\nu_\om(z)$ consisting of a nonzero complex number 
$\la_\om(z)$, a complex function $h_\om(z)\in\cH_\om^{\al,\xi}$ and a 
complex continuous linear functional $\nu_\om(z)$ on $\cH_\om^{\al,\xi}$ such that 
\begin{eqnarray}\label{RPF deter equations-General}
\cA_z^\om h_\om(z)=\la_\om(z) h_{\te\om}(z),\,\,
(\cA_z^\om)^*\nu_{\te\om}(z)=\la_\om(z)\nu_\om(z)\text{ and }\\
\nu_\om(z) h_\om(z)=\nu_\om(z)\textbf{1}=1.\nonumber
\end{eqnarray}
For any $z\in U$ the maps $\om\to\la_\om(z)$ and $(\om,x)\to h_\om(z)(x),\,(\om,x)\in\cE$
are measurable and the family $\nu_\om(z)$ is measurable in $\om$. 
When $z=t\in\bbR$ then $\la_\om(t)>0$, the function $h_\om(t)$ is strictly
positive, $\nu_\om(t)$ is a probability measure and the equality 
$\nu_{\te\om}(t)\big(\cA_t^\om g)=\la_\om(t)\nu_{\om}(t)(g)$ holds true for any 
bounded Borel function $g:\cE_\om\to\bbC$.

Moreover, this triplet is analytic and uniformly bounded around $0$.
Namely, the maps 
\[
\la_\om(\cdot):U\to\bbC,  h_\om(\cdot):U\to \cH_\om^{\al,\xi}\,\text{ and }
\nu_\om(\cdot):U\to \big(\cH_\om^{\al,\xi}\big)^*
\]
are analytic, where $(\cH_\om^{\al,\xi})^*$ is the dual space of $\cH_\om^{\al,\xi}$, 
and for any $k\geq0$
there is a constant $C_k>0$ so that
\begin{equation}\label{UnifBound}
\max\Big(\sup_{z\in U}|\la_\om^{(k)}(z)|,\, 
\sup_{z\in U}\| h_\om^{(k)}(z)\|_{\al,\xi},\, \sup_{z\in U}
\|\nu^{(k)}_\om(z)\|_{\al,\xi}\Big)\leq C_k,\,\,P\text{-a.s.}
\end{equation}
where $g^{(k)}$ stands for the $k$-th derivative of a function on the complex 
plane which takes values in some Banach space and $\|\nu\|_{\al,\xi}$ is the 
operator norm of a linear functional $\nu:\cH_\om^{\al,\xi}\to\bbC$.

Furthermore, there exist constants $m_0$, $C$  and $c\in(0,1)$ so that $P$-a.s. for 
any $z\in U$,  $n\geq m_0$ and $q\in\cH_\om^{\al,\xi}$,
\begin{equation}\label{Exp Conv final.0}
\bigg\|\frac{\cA_z^{\om,n}q}{\la_{\om,n}(z)}
-\big(\nu_{\om}(z)q\big) h_{\te^n\om}(z)\bigg\|_{\al,\xi}\leq C\|q\|_{\al,\xi}
\cdot c^n
\end{equation}
where $\cA_z^{\om,n}=\cA_{z}^{\te^{n-1}\om}\circ\cA_{z}^{\te^{n-2}\om}\circ\cdots\circ\cA_{z}^{\om}$
and  $\la_{\om,n}(z)=\prod_{j=0}^{n-1}\la_{\te^j\om}(z)$.

\end{theorem}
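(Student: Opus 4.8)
The statement is essentially a translation of the random complex RPF theorem for the transfer operators $\cL_z^\om$ (Corollary 5.4.2 in \cite{book}) into the normalized operators $\cA_z^\om$. So the plan is to reduce everything to the already-known RPF theorem for $\cL_z^\om$ via the explicit conjugacy. First I would invoke Corollary 5.4.2 of \cite{book} to obtain the triplet $(\boldsymbol\la_\om(z),\boldsymbol h_\om(z),\boldsymbol\nu_\om(z))$ for the family $\cL_z^\om$, together with the analyticity, the uniform bounds \eqref{UnifBound}-type estimates, the measurability, the positivity when $z=t\in\bbR$, and the exponential convergence \eqref{Exp Conv final.0}-type estimate, all on a fixed bounded complex neighborhood $U$ of $0$. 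To do this one must check that Assumption \ref{bound ass+Tr Op meas+ass phi u} supplies exactly the hypotheses needed for that corollary: boundedness of $n_\om$, $D_\om$, $\|\phi_\om\|_{\al,\xi}$, $\|u_\om\|_{\al,\xi}$ and a uniform lower bound on $\gam_\om-1$, plus the measurability statements. (One also needs $\boldsymbol h_\om(0)$ to be bounded above and bounded away from $0$ uniformly in $\om$, and $\om\mapsto\boldsymbol\la_\om(0)$ to be bounded away from $0$ and $\infty$; these are part of the $z=0$ conclusions of Corollary 5.4.2.)

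Next I would define, for $z\in U$,
\[
\la_\om(z)=\frac{\boldsymbol\la_\om(z)}{\boldsymbol\la_\om(0)},\qquad
h_\om(z)=\frac{\boldsymbol h_\om(z)}{\nu_\om(z)(\boldsymbol h_\om(z)/\boldsymbol h_\om(0))\,\boldsymbol h_\om(0)} ,\qquad
\nu_\om(z)(g)=\frac{\boldsymbol\nu_\om(z)(g\,\boldsymbol h_\om(0))}{\boldsymbol\nu_\om(z)(\boldsymbol h_\om(0))}
\]
— more simply, set $\tilde h_\om(z)=\boldsymbol h_\om(z)/\boldsymbol h_\om(0)$ and $\tilde\nu_\om(z)(g)=\boldsymbol\nu_\om(z)(g\boldsymbol h_\om(0))$, then normalize so that $\nu_\om(z)(h_\om(z))=\nu_\om(z)(\textbf1)=1$; the normalizing constant $\tilde\nu_\om(z)(\textbf1)=\boldsymbol\nu_\om(z)(\boldsymbol h_\om(0))$ equals $1$ at $z=0$ (since $\boldsymbol\mu_\om(\textbf1)=1$), hence is nonzero and analytic in a possibly smaller neighborhood of $0$, uniformly in $\om$ by the uniform bounds. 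A direct computation using $\cA_z^\om g=\cL_z^\om(g\boldsymbol h_\om(0))/(\boldsymbol\la_\om(0)\boldsymbol h_{\te\om}(0))$ and the eigen-relations $\cL_z^\om\boldsymbol h_\om(z)=\boldsymbol\la_\om(z)\boldsymbol h_{\te\om}(z)$, $(\cL_z^\om)^*\boldsymbol\nu_{\te\om}(z)=\boldsymbol\la_\om(z)\boldsymbol\nu_\om(z)$ then yields $\cA_z^\om h_\om(z)=\la_\om(z)h_{\te\om}(z)$ and $(\cA_z^\om)^*\nu_{\te\om}(z)=\la_\om(z)\nu_\om(z)$, and the normalization $\nu_\om(z)h_\om(z)=\nu_\om(z)\textbf1=1$ holds by construction. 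Measurability of $\om\mapsto\la_\om(z)$, $(\om,x)\mapsto h_\om(z)(x)$ and $\om\mapsto\nu_\om(z)$ is inherited from that of the $\boldsymbol{}$-triplet together with measurability of $\boldsymbol h_\om(0)$ (and the fact that $\boldsymbol h_\om(0)$ is a bounded, bounded-away-from-zero continuous function so multiplication by it is a bicontinuous isomorphism of $\cH_\om^{\al,\xi}$). For $z=t\in\bbR$, positivity of $\boldsymbol h_\om(0)$ forces $h_\om(t)>0$, $\la_\om(t)>0$, and $\nu_\om(t)$ a probability measure, and the pull-back of $\nu_{\te\om}(t)(\cL_t^\om g)=\boldsymbol\la_\om(t)\nu_\om(t)(g/\boldsymbol h_\om(0))$ gives the stated quasi-invariance $\nu_{\te\om}(t)(\cA_t^\om g)=\la_\om(t)\nu_\om(t)(g)$.

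Analyticity and the uniform bounds \eqref{UnifBound} then follow: $\la_\om(\cdot)$, $h_\om(\cdot)$, $\nu_\om(\cdot)$ are quotients/products of analytic Banach-space-valued maps whose numerators and denominators are uniformly bounded (and whose denominators are uniformly bounded away from zero) on a sufficiently small neighborhood of $0$, so by the Cauchy estimates their derivatives of every order are uniformly bounded there; one then shrinks $U$ once more if necessary. Finally, for the exponential convergence \eqref{Exp Conv final.0}: writing $\cA_z^{\om,n}q=\cL_z^{\om,n}(q\boldsymbol h_\om(0))/(\boldsymbol\la_{\om,n}(0)\boldsymbol h_{\te^n\om}(0))$ and applying the analogous convergence estimate for $\cL_z^{\om,n}$ from \cite{book} (Theorem 5.6.3 / Corollary 5.4.2 there) to the vector $q\boldsymbol h_\om(0)$, then dividing by $\boldsymbol h_{\te^n\om}(0)$ and using $\|q\boldsymbol h_\om(0)\|_{\al,\xi}\le C\|q\|_{\al,\xi}$ together with the uniform bounds on $\boldsymbol h_{\te^n\om}(0)^{\pm1}$, $\tilde\nu$, $\tilde h$ and the normalizing constants, gives \eqref{Exp Conv final.0} with a uniform $C$ and $c\in(0,1)$ and a uniform $m_0$. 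The main obstacle — really the only nontrivial point — is bookkeeping the \emph{uniformity in $\om$} of all constants through the conjugacy: one must be sure that $\boldsymbol h_\om(0)$ and $\boldsymbol h_{\te\om}(0)$ are bounded above and below by deterministic constants and that the normalizing functionals stay bounded away from $0$ uniformly on the (shrunk) neighborhood $U$, which is where Assumption \ref{bound ass+Tr Op meas+ass phi u}(i) and the $z=0$ part of Corollary 5.4.2 in \cite{book} are essential; everything else is a routine pull-back of the corresponding statements for $\cL_z^\om$.
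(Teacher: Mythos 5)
Your approach matches the paper's: the paper treats Theorem \ref{RPF rand T.O. general} as an immediate consequence of Corollary 5.4.2 in \cite{book} for the family $\cL_z^\om$, pulled back along the explicit conjugacy $\cA_z^\om g=\cL_z^\om(g\boldsymbol{h}_\om(0))/(\boldsymbol{\la}_\om(0)\boldsymbol{h}_{\te\om}(0))$, and it records the resulting triplet in \eqref{RPF2}. Your formulas for $h_\om(z)$ and $\nu_\om(z)$ coincide with \eqref{RPF2} once you carry out the normalization you describe, and your discussion of uniformity, measurability, positivity at real $z$, and the exponential decay estimate is in line with what the paper needs.

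However, your explicit formula for the eigenvalue is wrong. You write $\la_\om(z)=\boldsymbol{\la}_\om(z)/\boldsymbol{\la}_\om(0)$, but with the normalized $h_\om(z)=a_\om(z)\boldsymbol{h}_\om(z)/\boldsymbol{h}_\om(0)$ and $\nu_\om(z)=(a_\om(z))^{-1}\boldsymbol{h}_\om(0)\boldsymbol{\nu}_\om(z)$, where $a_\om(z)=\boldsymbol{\nu}_\om(z)(\boldsymbol{h}_\om(0))$, the direct computation you allude to produces
\[
\la_\om(z)=\frac{a_\om(z)\,\boldsymbol{\la}_\om(z)}{a_{\te\om}(z)\,\boldsymbol{\la}_\om(0)},
\]
i.e.\ there is an additional multiplicative coboundary $a_\om(z)/a_{\te\om}(z)$ that your formula drops. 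The two expressions agree at $z=0$ (since $a_\om(0)=1$), and after taking the telescoping product $\la_{\om,n}(z)$ differs from $\boldsymbol{\la}_{\om,n}(z)/\boldsymbol{\la}_{\om,n}(0)$ only by the uniformly bounded factor $a_\om(z)/a_{\te^n\om}(z)$, so nothing downstream breaks badly — but as written your $\la_\om(z)$ simply does not satisfy $\cA_z^\om h_\om(z)=\la_\om(z)h_{\te\om}(z)$ with your own $h_\om$ and $\nu_\om$. This should be corrected to match \eqref{RPF2}.
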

Since $\cA_0^\om\textbf{1}=\textbf{1}$ we have $\la_\om(0)=1$ and $h_\om(0)\equiv\textbf{1}$. 
Remark that we can always take
\begin{equation}\label{RPF2}
\la_\om(z)=\frac{a_\om(z)\boldsymbol{\la}_\om(z)}{a_{\te\om}(z)\boldsymbol{\la}_\om(0)},\, 
h_\om(z)=\frac{a_\om(z)\textbf{h}_\om(z)}{\textbf{h}_\om(0)}\,\, \text{ and }\,\, 
\nu_\om(z)=(a_\om(z))^{-1}\textbf{h}_\om(0)\boldsymbol{\nu}_\om(z)
\end{equation}
where $(\boldsymbol{\la}_\om(z),\textbf{h}_\om(z),\boldsymbol{\nu}_\om(z))$ is the RPF triplet corresponding 
to $\cL_z^\om$ and
$a_\om(z)=\boldsymbol{\nu}_\om(z)(\textbf{h}_\om(0))$ (which is nonzero, see \cite{book}). 
 In particular, $\nu_\om(0)=\boldsymbol{\mu}_\om(0)$
and $\nu(0):=\int\nu_\om(0)dP(\om)=\boldsymbol{\mu}(0)$.

In the special case when $e^{-\phi_\om}$ is the appropriate Jacobian we have the following
\begin{proposition}\label{Prop2}
Let $\textbf{m}$ be a probability measure
on $\cX$ and suppose  $T_\om$ is non-singular with respect to $\textbf{m}$, $P$-a.s.
Let $\phi_\om=-\ln\big(\frac{d(T_\om)_*\textbf{m}}{d\textbf{m}}\big)$ and assume that 
it is a H\"older continuous function and that $\|\phi_\om\|_{\al,\xi}$ is a bounded
random variable. Then $\boldsymbol{\nu}_\om(0)=\textbf{m}$, $\boldsymbol{\la}_\om(0)=1$.

When $\phi_\om=-\ln\big(\frac{d(T_\om)_*\textbf{m}}{d\textbf{m}}\big)+\psi_\om-\psi_{\te\om}\circ T_\om+K_\om$
for some bounded function $\psi_\om(x)=\psi(\om,x)$ and a random variable $K_\om$, then 
$\nu_\om(0)$ is equivalent to $\textbf{m}$ and the Radon-Nykodim derivative is bounded from above and 
below by positive constants.
\end{proposition}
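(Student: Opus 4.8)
The plan is to identify the RPF triplet of $\cL_0^\om$ directly from the defining properties in the two cases. For the first statement, the key observation is that when $\phi_\om=-\ln\big(\frac{d(T_\om)_*\textbf{m}}{d\textbf{m}}\big)$, the operator $\cL_0^\om$ is precisely the transfer (Perron--Frobenius) operator of $T_\om$ with respect to $\textbf{m}$, so that the duality identity
\[
\int_\cX \cL_0^\om g\, d\textbf{m}=\int_\cX g\, d\textbf{m}
\]
holds for every bounded Borel $g:\cX\to\bbC$; this is the standard change-of-variables computation unwinding the definition \eqref{TraOp} of $\cL_0^\om$ together with non-singularity of $T_\om$. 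First I would verify this identity carefully (summing over preimages and using that $e^{\phi_\om}$ is, up to the Jacobian convention, the reciprocal of the local expansion factor of $\textbf{m}$ under $T_\om$). The identity says exactly that $(\cL_0^\om)^*\textbf{m}=\textbf{m}$ for all $\om$, i.e. $\textbf{m}$ (viewed as the constant family $\boldsymbol{\nu}_\om(0)=\textbf{m}$) together with the eigenvalue $\boldsymbol{\la}_\om(0)=1$ solves the second RPF equation in \eqref{RPF deter equations-General}. Since $\cL_0^\om\textbf{1}=\textbf{1}$ gives the eigenfunction $\boldsymbol h_\om(0)\equiv\textbf 1$ and the normalization $\boldsymbol{\nu}_\om(0)(\boldsymbol h_\om(0))=\textbf{m}(\textbf 1)=1$ is automatic, uniqueness of the RPF triplet (Corollary 5.4.2 in \cite{book}, applicable under Assumption \ref{bound ass+Tr Op meas+ass phi u} and the boundedness of $\|\phi_\om\|_{\al,\xi}$ assumed here) forces $\boldsymbol{\nu}_\om(0)=\textbf{m}$ and $\boldsymbol{\la}_\om(0)=1$.

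For the second statement, with $\phi_\om=-\ln\big(\frac{d(T_\om)_*\textbf{m}}{d\textbf{m}}\big)+\psi_\om-\psi_{\te\om}\circ T_\om+K_\om$, the idea is to reduce to the first case by a coboundary-type conjugation. Writing $\widetilde{\cL}_0^\om$ for the transfer operator associated with the potential $-\ln\big(\frac{d(T_\om)_*\textbf{m}}{d\textbf{m}}\big)$ (so that $\widetilde{\boldsymbol{\nu}}_\om(0)=\textbf{m}$, $\widetilde{\boldsymbol{\la}}_\om(0)=1$, $\widetilde{\boldsymbol h}_\om(0)\equiv\textbf 1$ by the first part), a direct computation from \eqref{TraOp} shows
\[
\cL_0^\om g = e^{K_\om}\, e^{-\psi_{\te\om}}\,\widetilde{\cL}_0^\om\big(e^{\psi_\om} g\big).
\]
From this conjugation one reads off the triplet for $\cL_0^\om$: the eigenfunction is $\boldsymbol h_\om(0)=c_\om e^{-\psi_\om}$ for a suitable normalizing random constant $c_\om$ (bounded above and below since $\psi_\om$ is bounded), the eigenvalue is $\boldsymbol{\la}_\om(0)=e^{K_\om}\cdot(c_{\te\om}/c_\om)$ — up to the usual freedom \eqref{RPF2} in choosing the scaling, and in particular the product $\prod\boldsymbol{\la}_{\te^j\om}(0)$ telescopes in the $c$-factors — and the functional is $\boldsymbol{\nu}_\om(0)=(\text{const})\cdot e^{\psi_\om}\textbf{m}$. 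Hence $\nu_\om(0)$ is equivalent to $\textbf{m}$ with Radon--Nikodym derivative proportional to $e^{\psi_\om}$, which is bounded above and below by positive constants because $\psi_\om$ is bounded (uniformly in $\om$). I would then invoke uniqueness of the RPF triplet once more to conclude that these expressions indeed give $(\boldsymbol{\la}_\om(0),\boldsymbol h_\om(0),\boldsymbol{\nu}_\om(0))$.

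The main obstacle I anticipate is bookkeeping rather than conceptual: one must be careful about the normalization conventions in \eqref{RPF deter equations-General} (the requirement $\boldsymbol{\nu}_\om(0)(\boldsymbol h_\om(0))=\boldsymbol{\nu}_\om(0)(\textbf 1)=1$ is not automatically compatible with the naive choice $\boldsymbol h_\om(0)=e^{-\psi_\om}$, forcing the random constants $c_\om$), and about checking that all the functions produced — in particular $e^{\psi_\om}$, $e^{-\psi_\om}$ and $\cL_0^\om$-images — actually lie in $\cH_\om^{\al,\xi}$ with uniformly bounded norms, which is where H\"older continuity of $\phi_\om$ (hence of $\psi_\om$, or at least the regularity needed to apply the results of \cite{book}) and the boundedness hypotheses enter. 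Once the conjugation identity for $\cL_0^\om$ is established and the uniqueness of the RPF triplet is in hand, everything else is a matter of reading off the three objects and estimating $e^{\pm\psi_\om}$ by $e^{\pm\|\psi\|_\infty}$.
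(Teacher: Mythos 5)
Your first paragraph contains a genuine error: the claim $\cL_0^\om\textbf{1}=\textbf{1}$ is false in the setting of Proposition~\ref{Prop2}. With $\phi_\om=-\ln\big(\frac{d(T_\om)_*\textbf{m}}{d\textbf{m}}\big)$, duality only gives $(\cL_0^\om)^*\textbf{m}=\textbf{m}$, i.e.\ $\textbf{m}(\cL_0^\om g)=\textbf{m}(g)$. The function $\cL_0^\om\textbf{1}$ is the density of $(T_\om)_*\textbf{m}$ with respect to $\textbf{m}$, and $\cL_0^\om\textbf{1}\equiv\textbf{1}$ would mean $\textbf{m}$ is $T_\om$-\emph{invariant}, which is not assumed — only non-singularity is. You have confused $\cL_0^\om$ with the \emph{normalized} operator $\cA_0^\om$ (for which $\cA_0^\om\textbf{1}=\textbf{1}$ does hold, as noted right after Theorem~\ref{RPF rand T.O. general}). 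Consequently your identification $\boldsymbol h_\om(0)\equiv\textbf{1}$ is wrong; in fact $\boldsymbol h_\om(0)$ is the density of the random a.c.i.m.\ (in the product-measure setting of Section~\ref{Non Inv Sec} it equals $\bar h$, which is typically nonconstant). This means you have not exhibited a full RPF \emph{triplet} $(\la,h,\nu)$, only the eigenfunctional part, so the appeal to "uniqueness of the RPF triplet" as written is a gap. The error propagates into the second part: the conjugation identity $\cL_0^\om g = e^{K_\om}e^{-\psi_{\te\om}}\widetilde{\cL}_0^\om(e^{\psi_\om}g)$ is correct, but your formula $\boldsymbol h_\om(0)=c_\om e^{-\psi_\om}$ is derived from the false premise $\widetilde{\boldsymbol h}_\om(0)\equiv\textbf{1}$.

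The paper avoids this issue entirely by not computing the eigenfunction at all: it plugs $\mu_n=\textbf{m}$ into the explicit limit characterization of the conformal measure (formula (4.3.25) of \cite{book}), namely $\boldsymbol{\nu}_\om(0)(g)=\lim_n\frac{\textbf{m}(\cL_0^{\om,n}g)}{\textbf{m}(\cL_0^{\om,n}\textbf{1})}$, and by the duality relation both numerator and denominator are constant in $n$, giving $\boldsymbol{\nu}_\om(0)=\textbf{m}$ directly and then $\boldsymbol{\la}_\om(0)=\boldsymbol{\nu}_{\te\om}(0)(\cL_0^\om\textbf{1})=\textbf{m}(\cL_0^\om\textbf{1})=1$. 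For the coboundary case it uses the closed formula $\textbf{m}(\cL_0^{\om,n}g)=e^{\sum_{j=0}^{n-1}K_{\te^j\om}}\textbf{m}(ge^{\psi_\om-\psi_{\te^n\om}\circ T_\om^n})$ and the same limit to read off that $\boldsymbol{\nu}_\om(0)$ is $\propto e^{\psi_\om}\textbf{m}$ up to bounded factors. Your duality/conjugation observations are the right raw ingredients, but to close the argument you would need to either (a) invoke a uniqueness statement specifically for positive eigenfunctionals at the leading eigenvalue rather than for full triplets, or (b) switch to the limit characterization as the paper does and drop the incorrect identification of $\boldsymbol h_\om(0)$.
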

\begin{proof}
Suppose that $\phi_\om=-\ln\big(\frac{d(T_\om)_*\textbf{m}}{d\textbf{m}}\big)$. Then
$\cL_0^\om$ is the dual operator
of the (Koopman) operator $g\to g\circ T_\om$ with respect to $\textbf{m}$. In particular, 
$\textbf{m}(\cL_0^{\om,n} \textbf{1})=1$ for any $n\geq1$. 
Taking $\mu_n=\textbf{m}$ in the limit expression of $\boldsymbol{\nu}_\om(0)$ in (4.3.25) from \cite{book} (see also Theorem 5.3.1 from there ) we obtain that for any H\"older continuous function $g:\cX\to\bbR$, 
\begin{equation}\label{LimLam}
\int g d\boldsymbol{\nu}_\om(0)=\lim_{n\to\infty}
\frac{\textbf{m}(\cL_0^{\om,n}g)}{\textbf{m}(\cL_0^{\om,n}\textbf{1})}=
\lim_{n\to\infty}\frac{\textbf{m}(g)}{\textbf{m}(\textbf{1})}=\textbf{m}(g)
\end{equation}
where in the first equality we used the duality relation
discussed above. Since $\boldsymbol{\nu}_\om(0)$ and $\textbf{m}$ agree (as linear functionals) on 
$\cH^{\al,\xi}$ we obtain that the measures  $\boldsymbol{\nu}_\om(0)$ and $\textbf{m}$ are identical. 
Since $\boldsymbol{\la}_\om(0)=\boldsymbol{\nu}_{\te\om}(0)(\cL_0^\om\textbf{1})$
and $\textbf{m}(\cL_0^\om \textbf{1})=1$ we obtain from $\nu_\om(0)=\textbf{m}$ that $\boldsymbol{\la}_\om(0)=1$.

The proof in the case when
$\phi_\om=-\ln\big(\frac{d(T_\om)_*\textbf{m}}{d\textbf{m}}\big)+\psi_\om-\psi_{\te\om}\circ T_\om+K_\om$
proceeds in a similar way since then $\mu(\cL_0^{\om,n}g)=e^{\sum_{j=0}^{n-1}K_{\te^j\om}}\textbf{m}(ge^{\psi_\om-\psi_{\te^n\om}\circ T_\om^n})$.

\end{proof}
The equalities $\boldsymbol{\nu}_\om(0)=\textbf{m}$ and $\boldsymbol{\la}_\om(0)=1$ in 
Propositon \ref{Prop2} were stated in \cite{Kifer-Thermo} in the case when $\cX$ is a $C^2$-compact Riemanian manifold and $\textbf{m}$ is the (normalized) volume measure (in fact, the proof comes from \cite{Kifer-1991}).
In \cite{Kifer-Thermo} the random variable $\|\phi_\om\|_{\al,\xi}$ is only assumed to be integrable.
Under this integrability condition, the proof from \cite{book}  of existence of RPF trpilets proceeds similarly to \cite{Kifer-1991} for real $t$'s, so the condition that $\|\phi_\om\|_{\al,\xi}$ is not really necessary in
Proposition \ref{Prop2}.

\subsection{Pressure near $0$}
Suppose that the assumptions of Theorem \ref{RPF rand T.O. general} hold true.
We will gather several useful results concerning the behaviour of the logarithm of 
$\la_{\om,n}(z)$ around $0$ which were proved in Section 7.2 of \cite{book}. 
The first one is
\begin{lemma}\label{Lemma one}
There exists a neighborhood $U_1$ of $0$ so that $P$-a.s.
for any $k=1,2,...$ there is
an analytic function $\Pi_{\om,k}:U_1\to\bbC$ such that
\begin{equation}\label{New press rel}
e^{\Pi_{\om,k}(z)}=\la_{\om,k}(z)\,\text{ and }\,
|\Pi_{\om,k}(z)|\leq k(\ln2+\pi)
\end{equation}
for any $z\in U_1$.
\end{lemma}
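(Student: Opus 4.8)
The plan is to construct $\Pi_{\om,k}$ as a branch of the logarithm of $\la_{\om,k}(z) = \prod_{j=0}^{k-1}\la_{\te^j\om}(z)$, by first building a consistent branch of $\log\la_\om(z)$ on a small neighborhood of $0$ and then summing along the orbit. Recall from Theorem \ref{RPF rand T.O. general} that $\la_\om(0)=1$ and that $z\mapsto\la_\om(z)$ is analytic on the fixed neighborhood $U$ of $0$, with the uniform bound $\sup_{z\in U}|\la_\om(z)|\le C_0$ and $\sup_{z\in U}|\la_\om'(z)|\le C_1$ holding $P$-a.s. (the $k=0,1$ cases of (\ref{UnifBound})). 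Since $\la_\om(0)=1$, by continuity there is a radius $r_\om>0$ with $\operatorname{Re}\la_\om(z)>0$ on $B(0,r_\om)$; a priori $r_\om$ depends on $\om$, but using $|\la_\om(z)-1|\le C_1|z|$ (mean value inequality on the segment from $0$ to $z$, valid once the segment lies in $U$) one gets $|\la_\om(z)-1|\le \tfrac12$ uniformly on $U_1:=B(0,\min(1/(2C_1),r))$ where $B(0,r)\subset U$. On this $U_1$, the principal branch $\mathrm{Log}$ is defined and analytic on $\{|w-1|\le\tfrac12\}$, so we may set $\pi_\om(z):=\mathrm{Log}\,\la_\om(z)$, which is analytic on $U_1$, satisfies $\pi_\om(0)=0$, and obeys $|\pi_\om(z)|\le \ln 2 + \pi/2$ (since $|w-1|\le\tfrac12$ forces $|\log|w||\le\ln 2$ and $|\arg w|\le\pi/2$, hence $|\mathrm{Log}\,w|\le\ln 2+\pi/2$, comfortably within $\ln 2+\pi$).

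Next I would define $\Pi_{\om,k}(z):=\sum_{j=0}^{k-1}\pi_{\te^j\om}(z)$. This is a finite sum of analytic functions on $U_1$, hence analytic, and $e^{\Pi_{\om,k}(z)}=\prod_{j=0}^{k-1}e^{\pi_{\te^j\om}(z)}=\prod_{j=0}^{k-1}\la_{\te^j\om}(z)=\la_{\om,k}(z)$, which is the first assertion of (\ref{New press rel}). For the bound, the triangle inequality gives $|\Pi_{\om,k}(z)|\le \sum_{j=0}^{k-1}|\pi_{\te^j\om}(z)|\le k(\ln 2+\pi/2)\le k(\ln 2+\pi)$ for all $z\in U_1$, which is the second assertion. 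Measurability in $\om$ (implicit in the "$P$-a.s." formulation) follows because $\om\mapsto\la_\om(z)$ is measurable for each fixed $z$ and $\mathrm{Log}$ is continuous, so $\om\mapsto\pi_\om(z)$ and hence $\om\mapsto\Pi_{\om,k}(z)$ are measurable; one can restrict to a single full-measure set on which all the uniform bounds from Theorem \ref{RPF rand T.O. general} hold simultaneously, so the neighborhood $U_1$ can be chosen independent of $\om$.

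The only genuinely delicate point is making sure $U_1$ is a \emph{deterministic} neighborhood: this is exactly what the uniform derivative bound $\sup_{z\in U}|\la_\om'(z)|\le C_1$ in (\ref{UnifBound}) buys us, converting the a priori $\om$-dependent "$\operatorname{Re}\la_\om>0$" region into the fixed ball $B(0,\min(1/(2C_1),r))$. Everything else is routine: analyticity is preserved under finite sums and composition with $\mathrm{Log}$ on a region avoiding the branch cut, and the exponential identity is immediate from $\la_{\om,k}=\prod_{j=0}^{k-1}\la_{\te^j\om}$. I do not expect any obstruction beyond carefully recording these uniform estimates; indeed this is precisely the content attributed to Section 7.2 of \cite{book}.
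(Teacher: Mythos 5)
Your proof is correct, and it is exactly the natural argument one expects: since $\la_\om(0)=1$ and $\sup_{z\in U}|\la_\om'(z)|\le C_1$ holds $P$-a.s. by (\ref{UnifBound}), integrating $\la_\om'$ along the segment from $0$ to $z$ gives $|\la_\om(z)-1|\le C_1|z|\le\tfrac12$ on a deterministic ball $U_1$, so $\pi_\om:=\mathrm{Log}\,\la_\om$ is well-defined and bounded there, and summing along the orbit yields $\Pi_{\om,k}$ with the stated exponential identity and the bound (in fact you get the sharper $k(\ln 2+\pi/2)$). The paper itself only cites Section 7.2 of \cite{book} for this lemma rather than giving a proof inline, so there is nothing to contrast against; your argument is the standard one and the one point that actually needs care — making the radius of $U_1$ independent of $\om$ via the uniform derivative bound — is handled correctly.
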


The second is
\begin{lemma}\label{Press der general}
There exists a constant $Q_2>0$ so that 
$P$-a.s. for any $k\in\bbN$,
\begin{equation}\label{der1-Gen}
\Pi_{\om,k}'(0)=\int S_k^\om u\,d\nu_\om(0)
\,\,\,\text{ and }\,\,\,
|\Pi_{\om,k}''(0)-\emph{var}_{\nu_\om(0)}S_k^\om u|\leq Q_2.
\end{equation}
\end{lemma}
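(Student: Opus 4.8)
\textbf{Proof plan for Lemma \ref{Press der general}.}
The plan is to differentiate the relation $e^{\Pi_{\om,k}(z)}=\la_{\om,k}(z)=\prod_{j=0}^{k-1}\la_{\te^j\om}(z)$ from Lemma \ref{Lemma one} and to evaluate the derivatives at $z=0$ using the RPF eigendata of Theorem \ref{RPF rand T.O. general}, for which $\la_\om(0)=1$ and $h_\om(0)\equiv\mathbf{1}$. First I would note that $\Pi_{\om,k}(z)=\sum_{j=0}^{k-1}\log\la_{\te^j\om}(z)$ for a suitable branch of the logarithm (available on a possibly smaller neighborhood $U_1$ since $\la_\om(0)=1$ and the $\la_\om$ are uniformly analytic), so that $\Pi_{\om,k}'(0)=\sum_{j=0}^{k-1}\la_{\te^j\om}'(0)$ and $\Pi_{\om,k}''(0)=\sum_{j=0}^{k-1}\big(\la_{\te^j\om}''(0)-(\la_{\te^j\om}'(0))^2\big)$, using $\la_\om(0)=1$.

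Next I would compute $\la_\om'(0)$ and $\la_\om''(0)$ by applying $\nu_{\te\om}(0)$ to the eigenfunction equation $\cA_z^\om h_\om(z)=\la_\om(z)h_{\te\om}(z)$ and differentiating in $z$. Using $\nu_{\te\om}(0)h_{\te\om}(0)=1$, the normalization $\nu_\om(0)h_\om(0)=\nu_\om(0)\mathbf 1=1$, the transfer-duality $\nu_{\te\om}(0)(\cA_0^\om g)=\nu_\om(0)(g)$, and the fact that $\cA_0^\om$ differentiated in $z$ produces a factor $u_\om$ (since $\cA_z^\om$ is the transfer operator generated by $\phi_\om+\log\mathbf h_\om(0)-\log\mathbf h_{\te\om}\circ T_\om-\log\boldsymbol\la_\om(0)+zu_\om$), one gets $\la_\om'(0)=\nu_\om(0)(u_\om)$ and, after a second differentiation, $\la_\om''(0)=\nu_\om(0)(u_\om^2)+2\,\nu_\om(0)\big(u_\om\cdot(\text{correction terms from } h_\om'(0))\big)$. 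Summing over $j=0,\dots,k-1$ and using the cocycle identity $S_k^\om u=\sum_{j=0}^{k-1}u_{\te^j\om}\circ T_\om^j$ together with the invariance $\nu_{\te^j\om}(0)(g\circ T_\om^j\text{-type terms})$ reducing back to $\nu_\om(0)$, the first-derivative sum telescopes to $\Pi_{\om,k}'(0)=\int S_k^\om u\,d\nu_\om(0)$, which is exactly the first claim.

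For the second claim I would not attempt to identify $\Pi_{\om,k}''(0)$ exactly with $\mathrm{var}_{\nu_\om(0)}S_k^\om u$ — that equality fails in general — but only to bound the discrepancy by a constant $Q_2$ independent of $k$ and $\om$. The point is that $\Pi_{\om,k}''(0)$ equals $\mathrm{var}_{\nu_\om(0)}S_k^\om u$ up to boundary terms coming from the eigenfunction derivatives $h_{\te^j\om}'(0)$ at the two ends $j=0$ and $j=k-1$ of the orbit segment (the interior contributions assemble into the variance of the Birkhoff-type sum). Concretely, writing $S_k^\om u$ and expanding its variance, the cross terms $\sum_{i<j}\mathrm{Cov}(u_{\te^i\om}\circ T_\om^i, u_{\te^j\om}\circ T_\om^j)$ match the off-diagonal part of $\Pi_{\om,k}''(0)$ up to corrections controlled by $\|h_\om'(0)\|_{\al,\xi}$, which is uniformly bounded by $C_1$ from \eqref{UnifBound}; since $\|u_\om\|_{\al,\xi}$ is bounded (Assumption \ref{bound ass+Tr Op meas+ass phi u}(i)) and the convergence in \eqref{Exp Conv final.0} is exponential with rate $c<1$, the telescoping leaves only $O(1)$-many uncontrolled terms, each of bounded size. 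Collecting these gives $|\Pi_{\om,k}''(0)-\mathrm{var}_{\nu_\om(0)}S_k^\om u|\leq Q_2$ for a constant $Q_2$ depending only on the bounds in Assumption \ref{bound ass+Tr Op meas+ass phi u} and the constants $C_k$, $C$, $c$ of Theorem \ref{RPF rand T.O. general}.

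The main obstacle I anticipate is the bookkeeping in the second derivative: keeping track of which terms from differentiating the eigenfunction equation twice (those involving $h'_\om(0)$ and $h''_\om(0)$, pushed around by the non-stationary cocycle $T_\om^j$) reassemble into $\mathrm{var}_{\nu_\om(0)}S_k^\om u$, and showing that the residual is a genuinely bounded (in $k$) quantity rather than something growing with $k$. This is where the uniform analyticity bounds \eqref{UnifBound} and the exponential decay \eqref{Exp Conv final.0} do the real work, and it is presumably why the statement asserts only an $O(1)$ bound rather than an identity.
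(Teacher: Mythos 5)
Your proposal is correct and follows the same differentiation-of-the-eigenvalue-equation route that the cited source (Section 7.2 of \cite{book}) takes; the paper itself gives no new proof, only the citation. A few remarks to confirm the steps you left somewhat sketched: (i) although Lemma \ref{Lemma one} only asserts \emph{some} analytic logarithm $\Pi_{\om,k}$, branches differ by a constant, so $\Pi_{\om,k}'$ and $\Pi_{\om,k}''$ indeed agree with derivatives of $\sum_{j=0}^{k-1}\log\la_{\te^j\om}$; (ii) for $\la_\om'(0)=\nu_\om(0)(u_\om)$ the crucial ingredient you invoke implicitly is $\nu_\om(0)(h_\om'(0))=0$, which the paper records in the proof of Lemma \ref{VII.4} (differentiating $\nu_\om(z)h_\om(z)=1$ together with $\nu_\om(z)\mathbf 1=1$); combined with the pullback identity $\nu_\om(0)(g\circ T_\om^j)=\nu_{\te^j\om}(0)(g)$ this does give the exact equality $\Pi_{\om,k}'(0)=\nu_\om(0)(S_k^\om u)$; (iii) for the second derivative, after summing $\la_{\te^j\om}''(0)-(\la_{\te^j\om}'(0))^2$ over $j$, the $h''$ contributions telescope to a single boundary difference $\nu_\om(0)(h_\om''(0))-\nu_{\te^k\om}(0)(h_{\te^k\om}''(0))$, bounded uniformly by \eqref{UnifBound}, while the $h'$ terms decompose via the iterated eigenfunction identity $h_{\te^j\om}'(0)=\sum_{i<j}\cA_0^{\te^i\om,j-i}(\bar u_{\te^i\om})+\cA_0^{\om,j}h_\om'(0)$; the first piece reproduces exactly the off-diagonal covariance sum in $\mathrm{var}_{\nu_\om(0)}S_k^\om u$, and the leftover $\sum_j\nu_{\te^j\om}(0)\big(u_{\te^j\om}\,\cA_0^{\om,j}h_\om'(0)\big)$ is summable by \eqref{Exp Conv final.0} since $\nu_\om(0)(h_\om'(0))=0$, giving the claimed $O(1)$ bound $Q_2$. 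So the approach is sound, and your anticipation of where the real work lies (the bookkeeping and the geometric decay) is accurate.
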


The third is
\begin{lemma}\label{Remainder Corollary 1}
Suppose that $\sig^2>0$. Then there exist constants $t_0,c_0>0$
such that $P$-a.s. for any $z$ with $|z|\leq t_0$ and $k\geq 1$,
\begin{equation}
|\Pi_{\om,k}(z)-z\Pi'_{\om, k}(0)-
\frac{z^2}2\Pi''_{\om,k}(0)|\leq c_0|z|^3 k.
\end{equation}
\end{lemma}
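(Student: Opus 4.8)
The statement to prove is Lemma \ref{Remainder Corollary 1}: given $\sigma^2>0$, there exist $t_0,c_0>0$ so that $P$-a.s.\ for $|z|\le t_0$ and $k\ge 1$,
\[
\bigl|\Pi_{\om,k}(z)-z\Pi'_{\om,k}(0)-\tfrac{z^2}{2}\Pi''_{\om,k}(0)\bigr|\le c_0|z|^3 k.
\]

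The plan is to exploit analyticity of $\Pi_{\om,k}$ on the fixed neighborhood $U_1$ of $0$ from Lemma \ref{Lemma one} together with the uniform bound $|\Pi_{\om,k}(z)|\le k(\ln 2+\pi)$ there, and then apply Taylor's theorem with Cauchy estimates on the third derivative. Concretely, first I would fix a radius $r>0$ small enough that the closed disc $\overline{B(0,r)}\subset U_1$; this $r$ depends only on $U_1$, hence not on $\om$ or $k$. For $|z|\le t_0:=r/2$, Taylor's formula with integral remainder gives
\[
\Pi_{\om,k}(z)-z\Pi'_{\om,k}(0)-\tfrac{z^2}{2}\Pi''_{\om,k}(0)=\frac{1}{2}\int_0^1(1-s)^2\,\Pi'''_{\om,k}(sz)\,z^3\,ds,
\]
so it suffices to bound $\sup_{|w|\le t_0}|\Pi'''_{\om,k}(w)|$ by a constant times $k$. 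By the Cauchy integral formula for the third derivative, for $|w|\le r/2$,
\[
\Pi'''_{\om,k}(w)=\frac{3!}{2\pi i}\oint_{|\zeta-w|=r/2}\frac{\Pi_{\om,k}(\zeta)}{(\zeta-w)^4}\,d\zeta,
\]
and since $|\zeta-w|=r/2$ forces $|\zeta|\le r$, so $\zeta\in U_1$, the bound $|\Pi_{\om,k}(\zeta)|\le k(\ln 2+\pi)$ applies, yielding $|\Pi'''_{\om,k}(w)|\le 6\,k(\ln 2+\pi)/(r/2)^3$. Plugging this into the Taylor remainder and absorbing the numerical factors (including $\tfrac12\int_0^1(1-s)^2\,ds=\tfrac16$) into a single constant $c_0$, which depends only on $r$ (equivalently on $U_1$), completes the estimate.

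I would then remark that the almost-sure qualifier is harmless here: Lemma \ref{Lemma one} already provides, $P$-a.s., the analytic functions $\Pi_{\om,k}$ on the common neighborhood $U_1$ with the stated uniform bound, so the whole argument runs on the full-measure set where that conclusion holds, and $t_0,c_0$ are deterministic. The hypothesis $\sigma^2>0$ is not actually used in this particular derivation—it is inherited from the ambient setup of the subsection and is needed for the companion Lemma \ref{Remainder Corollary 1} to be meaningfully applied later (e.g.\ to ensure $\Pi''_{\om,k}(0)$ grows like $k\sigma^2$ via Lemma \ref{Press der general}), but the cubic remainder bound itself follows purely from analyticity plus the growth bound $|\Pi_{\om,k}|\le k(\ln2+\pi)$.

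The only mild subtlety—and the step I would be most careful about—is making sure the radius $r$ can be chosen uniformly: one needs $U_1$ to contain a fixed disc about $0$ independent of $\om$ and $k$, which is exactly what Lemma \ref{Lemma one} asserts (the neighborhood $U_1$ and the constant $\ln 2+\pi$ there are both $\om$- and $k$-independent). Given that, there is no genuine obstacle; the proof is a one-paragraph Cauchy-estimate argument. If for some reason one preferred to avoid complex contour integration, an alternative is to note $\Pi_{\om,k}(z)=\sum_{j=0}^{k-1}\Pi_{\te^j\om,1}(\cdot)$-type decompositions are not available directly (the $\Pi_{\om,k}$ are logarithms of products of eigenvalues, so they do add up as $\Pi_{\om,k}(z)=\sum_{j=0}^{k-1}\log\la_{\te^j\om}(z)$ along a fixed branch), in which case one bounds each single-step term's third derivative by a constant via the same Cauchy estimate applied to $\log\la_{\te^j\om}$ on a fixed disc, and sums over $j$ to get the factor $k$; but the direct route above is cleanest and I would present that.
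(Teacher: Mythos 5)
Your proof is correct, and the route through Cauchy estimates is the natural one: the paper defers the proof to Section~7.2 of \cite{book}, but given that Lemma~\ref{Lemma one} supplies exactly the two ingredients you use --- analyticity of $\Pi_{\om,k}$ on a $k$- and $\om$-independent neighborhood $U_1$ of the origin, and the linear-in-$k$ bound $|\Pi_{\om,k}|\le k(\ln 2+\pi)$ there --- there is no other reasonable mechanism for producing a remainder of size $O(|z|^3 k)$, and the argument in \cite{book} is of the same Cauchy-estimate type. The one implicit step worth being explicit about is that the lemma's formula tacitly uses $\Pi_{\om,k}(0)=0$ (so the Taylor polynomial has no constant term); this holds because $\la_{\om,k}(0)=\prod_{j<k}\la_{\te^j\om}(0)=1$ and the branch in Lemma~\ref{Lemma one} is normalized accordingly. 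Your side remark that $\sig^2>0$ plays no role in the cubic-remainder bound itself, and is only needed downstream to make $\Pi''_{\om,k}(0)\asymp k\sig^2$ useful, is accurate and worth keeping.
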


\subsection{Additional estimates}\label{Additional}
We gather here several estimates derived in \cite{book} which will be in 
constant use in the course of the proofs of the results stated in Section 
\ref{sec2}. We begin with 
the following random Lasota-Yorke type inequality
(see Lemma 5.6.1 in \cite{book}): there exists a constant $Q>0$ so that  
$P$-a.s. for any $n\geq1$,  $z\in\bbC$ and $g\in\cH^{\al,\xi}$,
\begin{eqnarray}\label{L.Y.-general}
\|\cL_z^{\om,n}g\|_{\al,\xi}\leq \|\cL_0^{\om,n}\textbf{1}\|_\infty
e^{|\Re(z)|\|S_n^\om u\|_\infty}\\
\times\big(v_{\al,\xi}(g)(\gam_{\om,n})^{-\al}
+(1+2Q)(1+\|z\|_1)\|g\|_\infty\big).\nonumber
\end{eqnarray}
where $\|z\|_1=|\Re(z)|+|\Im(z)|$ and $\Re(z)$ ($\Im(z)$) is the 
real (imaginary) part of $z$. 
In particular, $\cL_z^{\om,n}(\cH_\om^{\al,\xi})\subset\cH_{\te^n\om}^{\al,\xi}$
and the corresponding operator norms satisfy 
\[
\|\cL_{it}^{\om,n}\|_{\al,\xi}\leq B(1+|t|),\,\,t\in\bbR
\]
where $B$ is some constant. 
Next, similarly to proof of Corollary 5.12.3 in \cite{book}, there exists
a constant $B_0>1$ so that $P$-a.s. for any $n\geq1$, 
\begin{equation}\label{Norm comparison}
\|\cL_{z}^{\om,n}\|_{\al,\xi}B_0^{-1}\leq
 \| \cA_{z}^{\om,n}\|_{\al,\xi}\leq B_0\|\cL_{z}^{\om,n}\|_{\al,\xi}
\end{equation}
and therefore $\|\cA_{it}^{\om,n}\|_{\al,\xi}\leq B_2(1+|t|)$ where $B_2=BB_0$. 

Finally, since $\la_{\om,n}(z)=\nu_\om(z)(\cA_z^{\om,n}\textbf{1})$, applying
(\ref{UnifBound}) and (\ref{Norm comparison}) yields that there exists a constant $A$ so that $P$-a.s.,
\begin{equation}\label{lam basic bound}
|\la_{\om,n}(it)|\leq A\|\cA_{it}^{\om,n}\|_{\al,\xi}\leq AB_2
\end{equation}
for any $n\geq1$ and real $t$ so that $it\in U$.
Note that in the integral operator case it is trivial that 
$\|R_{it}^{\om,n}\|_{\infty}\leq 1$, so there is no need in using additional results in order to obtain appropriate upper bounds on the latter norms.

\section{Annealed limit theorems}\label{sec4}\setcounter{equation}{0}

\subsection{Reduction to the invertible case}\label{Reduction}
Let $(\Om,\cF,P,\te)$ be a measure preserving system 
and let $T_\om,\phi_\om$ and $u_\om$ be as described at the beginning of Section
\ref{sec2}. Recall the following definition. We say that $(\hat\Om,\hat\cF,\hat P,\hat\te)$
is the natural (invertible) extension of $(\Om,\cF,P,\te)$ if $\hat\Om\subset\Om_0^\bbZ$ is the space of all two sided sequences $\hat\zeta=(...\zeta_{-1},\zeta_0,\zeta_1...)$ so that $\te\zeta_i=\zeta_{i+1}$ for any $i$,\, $\hat\te$ is the shift map defined by $(\hat\te\zeta)_i=\zeta_{i+1}$,\, $\hat\cF$ is the $\te$-algebra induced on $\Om$ by the product $\sig$-algebra $\cF^\bbZ$ and $\hat P$ is the probability measure defined by 
$\hat P\{\hat\zeta: \zeta_i\in A_i;\,|i|\leq M\}=P_0(\bigcap_{i=0}^{2M}\te^{-i}A_{i-M})$.
When $\Om$ is a metric space and $\cF$ contains its Borel $\sig$-algebra then $\hat\Om$ is a metric 
space and $\hat\cF$ contains the appropriate Borel $\sig$-algebra.
Set $\cT_{\hat\zeta}=T_{\zeta_0}$ and consider the skew product map $\cT$ given by 
$\cT(\hat\zeta,x)=(\hat\te\hat\zeta,T_{\hat\zeta}x)$. Let $\pi_0:\hat\Om\to\Om$ be the projection
on the $0$-th coordinate given by $\pi_0\hat\zeta=\zeta_0$. 
Then $\pi_0$ is a factor map.
Set $\pi(\hat\zeta,x)=(\pi_0\hat\zeta,x)=(\zeta_0,x)$
and let $\boldsymbol{\mu}=P\times\ka$ be the measure described in Section \ref{Non Inv Sec}. Then $\boldsymbol{\hat\mu}:=\pi_*\boldsymbol{\mu}$
is the Gibbs measure described before Theorem \ref{CLT}, i.e. the measure whose disintegrations
are given by $\boldsymbol{\mu}_{\hat\zeta}=\boldsymbol{h}_{\hat\zeta}(0)\boldsymbol{\nu}_{\hat\zeta}(0)$. Indeed, in our circumstances Proposition
\ref{Prop2} shows that $\boldsymbol{\nu}_{\hat\zeta}(0)=\textbf{m}$ and that $\boldsymbol{\la}_{\hat\zeta}(0)=1$. Therefore, it will be sufficient to show that $\,\boldsymbol{h}_{\hat\zeta}=\bar h$, $\hat P$-a.s.. For this purpose, for any continuous function $g$ on $\cX$ write
\[
\ka(g)=\textbf{m}(\bar h g)=\textbf{m}(\bar h \cdot g\circ T_{\zeta_0})=\boldsymbol{\nu}_{\hat\zeta}(0)(\bar h \cdot g\circ\cT_{\hat\zeta})
=\boldsymbol{\nu}_{\hat\te\hat\zeta}(0)(\cL_0^{\hat\zeta}\bar h \cdot g)=
\textbf{m}(\cL_0^{\hat\zeta}\bar h \cdot g)
\] 
where in the second equality we used the $T$-invariance of $\boldsymbol{\mu}=P\times\ka$.
Since these equalities hold true for any continuous $g$ and $\bar h$ is continuous, we derive
that $\cL_0^{\hat\zeta}\bar h=\bar h$. Now, as in the proof of Propostion 3.19 in \cite{MSU}, 
we derive from  (\ref{Exp Conv final.0}) that the left hand side of (\ref{Exp Conv final.0}) converges 
to $0$ as $n\to\infty$ for any continuous function $q:\cX\to\bbC$.
Since $\bar h$ is a density function we have $\textbf{m}(\bar h)=1$,
and therefore, taking $q=\bar h$ yields that the sequence $\boldsymbol{h}_{\hat\te^n\hat\zeta}(0)$ converges to $\bar h$.
We claim next that 
$\boldsymbol{h}_{\hat\zeta}=\bar h$, $\hat P$-a.s. Indeed, for any $x\in\cX$ we have 
\[
0=\lim_{n\to\infty}\frac1n\sum_{j=0}^{n-1}|\boldsymbol{h}_{\hat\te^j\hat\zeta}(0)(x)-\bar h(x)|,\,\,\hat P-a.s.
\]
implying that $\int |\boldsymbol{h}_{\zeta}(0)(x)-\bar h(x)|d\hat P(\hat\zeta)=0$, or, equivalently, that 
$\boldsymbol{h}_{\zeta}(0)(x)=\bar h(x)=0$, $\hat P$-a.s. By compactness of $\cX$ and continuity of 
both $\boldsymbol{h}_{\zeta}(0)$ and $\bar h$ we obtain the desired equality.

Set $U_{\hat\zeta}=u_{\zeta_0}$, and $\Phi_{\hat\zeta}=\phi_{\zeta_0}$. Then 
$\boldsymbol{\hat\mu}_{\hat\zeta}(U_{\hat\zeta})=\ka(u_{\zeta_0})=\boldsymbol{\mu}_{\zeta_0}(u_{\zeta_0})$
and the distribution of the processes $\{(U\circ\cT^n)(\hat\zeta,x):n\geq0\}$ and 
$\{(u\circ T^n)(\om,y):n\geq0\}$ are the same when $(\hat\zeta,x)$ is distributed according to 
$\boldsymbol{\hat\mu}$ and $(\om,y)$ is distributed according to $\boldsymbol{\mu}$. Finally,
the periodic points of $\te$ are exactly the points of the form $\hat\zeta=(...a,a,a,...)$, where 
$a=(\om_0,\te\om_0,...,\te^{n_0-1}\om_0)$ for some periodic point $\om_0$ of $\te$ whose 
period is $n_0$. Therefore,
all the conditions in Theorems \ref{CLT}, \ref{LLT--D.S.} and \ref{renewal-D.S.} hold true
with $\cT_{\hat\zeta},\Phi_{\hat\zeta}$ and $U_{\hat\zeta}$ when they hold with $T_{\om},\phi_{\om}$
and $u_\om$.

\begin{remark}\label{Reduction remark}
Applying the results stated in Section \ref{InvRes},
we obtain limit theorems in the non-invertible case also when $T_\om$'s do not necessarily preserve 
the absolutely continuous measure. Indeed, let $\boldsymbol{\hat\mu}$ be the 
Gibbs measure (in the extension) and set $\boldsymbol{\mu}=\pi_*\boldsymbol{\hat\mu}$. Then $\boldsymbol{\mu}$
is $T$-invariant and the distribution of the processes $\{(U\circ\cT^n)(\hat\zeta,x):n\geq0\}$ and 
$\{(u\circ T^n)(\om,y):n\geq0\}$ are the same when $(\hat\zeta,x)$ is distributed according to 
$\boldsymbol{\hat\mu}$ and $(\om,y)$ is distributed according to $\boldsymbol{\mu}$. The problem here
is that the condition that $\boldsymbol{\hat\mu}_{\hat\zeta}(U_{\hat\zeta})$ does not depend on $\hat\zeta$
can not be easily expressed in terms of the original non-invertible system $(\Om,\cF,P,\te)$, as explained in
the following.
The limit expressions (4.3.25) and (4.3.28) in Chapter 4 of \cite{book} show that $\boldsymbol{\la}_{\hat\zeta}(z)$
and $\boldsymbol{\nu}_{\hat\zeta}(z)$ depend only on $\zeta_0$, but in general, 
the function $\boldsymbol{h}_{\hat\zeta}(z)$ does not depend only on $\zeta_0$ since (see (\ref{Exp Conv final.0})),
\[
\boldsymbol{h}_{\hat\zeta}(z)=\lim_{n\to\infty}\frac{\hat\cL_{z}^{\hat\te^{-n}\hat\zeta,n}\textbf{1}}{\boldsymbol{\la}_{\hat\te^{-n}\hat\zeta,n}(z)}.
\]
Therefore, the Gibbs measure $\boldsymbol{\mu}_{\hat\zeta}(0)$ does not depend only on $\hat\zeta_0$.
Since $\hat\te$ is ergodic, the condition that $\boldsymbol{\mu}_{\hat\zeta}(0)(U_{\hat\zeta})$
does not depend on $\hat\zeta$ is equivalent to existence of the limit 
$\lim_{n\to\infty}\boldsymbol{\mu}_{\hat\te^n\hat\zeta}(0)(U_{\hat\te^n\hat\zeta})$, $\hat P$-a.s.,  which together with (\ref{Exp Conv final.0}), is equivalent to existence of the limits of 
\[
\big(\boldsymbol{\la}_{\hat\zeta,n}(0)\big)^{-1}\boldsymbol{\nu}_{\hat\te^n\hat\zeta}(0)(U_{\hat\te^n\hat\zeta}\hat\cL_0^{\hat\zeta,n}\textbf{1})=
\boldsymbol{\nu}_{\hat\zeta}(0)(U_{\hat\te^n\hat\zeta}\circ T_{\hat\zeta}^n)
\]
as $n\to\infty$ (note that the latter expressions depend only on $\zeta_0$). By proposition \ref{Prop2}, 
when $\phi_\om=-\ln\big(\frac{d(T_\om)_*\textbf{m}}{d\textbf{m}}\big)$ then
$\boldsymbol{\nu}_{\hat\zeta}(0)=\textbf{m}$, and so in this case the condition that
$\boldsymbol{\mu}_{\hat\zeta}(0)$ does not depend on $\hat\zeta$ is equivalent to 
convergence of $\textbf{m}(u_{\te^n\om}\circ T_\om^n)$ to $\boldsymbol{\mu}(u)=\int u d\boldsymbol{\mu}$.
\end{remark}

\subsection{Characteristic functions and Markov chains}
We begin with the following. 
Let $\xi_{n}^{\te^{-n}\om},\,n\geq0$ be the Markov chain 
generated by the initial distribution $\boldsymbol{\mu}_\om=\nu_\om(0)$
on $\cX$ and the $n$-th step transition operators 
$\cA_0^{\te^{-n}\om,n}$, where $\cA_z^{\om,j}=\cA_{z}^{\te^{j-1}\om}\circ\cA_{z}^{\te^{j-2}\om}\circ\cdots\circ\cA_{z}^{\om}$ for any complex $z$ and natural $j$. 
Set $S_n=\sum_{j=0}^{n-1}u\circ T^n$,
\[
\cS_n^\om=\sum_{j=0}^{n-1}u_{\te^{-j}\om}(\xi_{j}^{\te^{-j}\om})
\]
and let $\psi_{\om,n}(t)$ be the characteristic function of $\cS_n^\om$.
When $x$ is distributed according to $\boldsymbol{\mu}_\om:=\boldsymbol{\mu}_\om(0)$ we have
\begin{equation}\label{DistEq}
(x,T_\om x,T^2_\om x,...,T^{n}_\om x)\overset{d}{=}(\xi_n^{\te^\om},\xi_{n-1}
^{\te^{n}\om},...,\xi_{0}^{\te^n\om})
\end{equation}
where $\overset{d}{=}$ stands for equality in distribution. 
Therefore,
\begin{equation}\label{Basic Char Fun rel}
\bbE e^{itS_n}=\int \psi_{\om,n}(t)dP(\om)=\int \psi_{\te^n\om,n}(t)dP(\om)=\int\boldsymbol{\mu}_{\te^n\om}(\cA_{it}^{\om,n}\textbf{1})dP(\om).
\end{equation}

\subsection{Proof of the CLT}
We assume here that $\nu_\om(0)(u_\om)=0$. Set 
$\hat S_n=n^{-\frac12}S_n$.
By the Levi continuity theorem, it suffice to show that there exists $r>0$ so that 
for any $t\in[-r,r]$,
\begin{equation}\label{Levi contin}
\lim_{n\to\infty}\bbE e^{it\hat S_n}=e^{-\frac12\sig^2t^2}.
\end{equation}
In order to prove the latter equality, we first write
\[
\bbE e^{it\hat S_n}=\int\boldsymbol{\mu}_{\te^n\om}(\cA^{\om,n}_{it_n}\textbf{1})dP(\om)
\]
where $t_n=n^{-\frac12}t$. Set 
\[
\varphi_{\om,n}(z)=\int\frac{\cA^{\om,n}_{z}\textbf{1}}{\la_{\om,n}(z)}d\boldsymbol{\mu}_{\te^n\om}.
\]
Then we can write
\[
\bbE e^{it\hat S_n}=\int e^{\Pi_{\om,n}(it_n)}\varphi_{\om,n}(it_n)dP(\om)
\]
where $\Pi_{\om,n}$ is defined in Lemma \ref{Lemma one}.
As in the proof of Theorem 7.1.1 in  \cite{book}, 
there exist constants $B,b>0$ such that for any $n\geq 1$ and $z$ so that $|z|\leq b$,
\begin{equation}\label{phi est}
|\varphi_{\om,n}(z)-\varphi_{\om,n}(0)|=|\varphi_{\om,n}(z)-1|\leq B|z|^2.
\end{equation}
Let $\ve\in(0,1)$ and $N_\ve$ be so that for any $n\geq N_\ve$ we have $P(\Gam_{n,\ve})\geq1-\ve$, 
where
\[
\Gam_{n,\ve}=\{\om: |n^{-1}\text{var}_{\nu_\om(0)}S_n^\om u-\sig^2|\leq\ve\}.
\]
Then for any $n\geq N_\ve$ and $t\in[-b,b]$,
\[
|\bbE e^{it\hat S_n}-e^{-\frac12\sig^2t^2}|\leq 
2C\ve+\int_{\Gam_{n,\ve}}|e^{\Pi_{\om,n}(it_n)}\varphi_{\om,n}(it_n)-e^{-\frac12\sig^2t^2}|dP(\om)
\]
where $C>0$ is some constant, and we used (\ref{lam basic bound}) and that
 $e^{\Re\bar\Pi_{\om,n}(it)}=|\la_{\om,n}(it)|$. Next, by lemma 
\ref{Press der general}, Lemma \ref{Remainder Corollary 1},\, 
(\ref{lam basic bound}) and (\ref{phi est}) there exists a constant $D>0$ so that 
for any $t\in[-b,b]$, $\ve>0$, $n\geq N_\ve$ and $\om\in\Gam_{n,\ve}$, 
\begin{eqnarray*}
|e^{\Pi_{\om,n}(it_n)}\varphi_{\om,n}(it_n)-e^{-\frac12\sig^2t^2}|\leq
e^{\Re\bar\Pi_{\om,n}(it_n)}
|\varphi_{\om,n}(it_n)-1|\\+
|e^{\Pi_{\om,n}(it_n)}-e^{-\frac12\sig^2t^2}|\leq D(n^{-1}+n^{-\frac12}+e^\ve-1).
\end{eqnarray*}
Equality (\ref{Levi contin}) follows now by taking $n\to\infty$
and then $\ve\to0$.

\subsection{Norms estimates for small t's}
For $P$ a.a. $\om$ and $n\in\bbN$, write
\[
|\la_{\om,n}(it)|=e^{\Re\Pi_{\om,n}(it)}.
\] 
By Lemmas \ref{Press der general} and \ref{Remainder Corollary 1}, we derive that for any $c>0$ there
exist positive constants $\del_0$ and $c_1$ so that for any $t\in[-\del_0,\del_0]$ and 
$n\geq1$ such that $\text{var}_{\nu_\om(0)}S_n^\om u\geq cn$,  
\begin{equation}
|\la_{\om,n}(it)|\leq Ae^{-c_1nt^2}. 
\end{equation}
Therefore, under Assumption \ref{theta mixing assumption}, there exist positive
constants $d_1,d_2$ and $\del_0$ and 
sets $\Gam_n,\,n\in\bbN$ so that 
\begin{equation}\label{First Gamma props1}
1-P(\Gam_n)\leq\frac{d_1}{n^\be}
\end{equation}
and for any $\om\in\Gam_n$ and $t\in[-\del_0,\del_0]$, 
\begin{equation}\label{First Gamma props2}
 |\la_{\om,n}(it)|\leq Ae^{-d_2nt^2}.
\end{equation}
Note that by (\ref{UnifBound}) and (\ref{Exp Conv final.0}), there exists a constant 
$A'$ so that 
\[
\|\cA_{it}^{\om,n}\|_{\al,\xi}\leq A'|\la_{\om,n}(it)|
\]
and so we obtain on $\Gam_n$ appropriate estimates of $\|\cA_{it}^{\om,n}\|_{\al,\xi}$ and
$\|\cL_{it}^{\om,n}\|_{\al,\xi}$, as well.

\begin{remark}
Suppose that $\sig^2>0$ and 
set $V_{n}(\om)=\text{var}_{\nu_\om(0)}S_n^\om u$.
In Chapter 7 of \cite{book} we proved that, $P$-a.s., the converges rate of of 
law of $(V_n(\om))^{-\frac12}S_n^\om \bar u$ towards the 
standard normal law is optimal. When Assumption \ref{theta mixing assumption} hold true with 
$\beta=\frac12$, then (\ref{First Gamma props2})
and the arguments in \cite{book} yield the following result:
there exists a constant $c>0$ so that for any $n\geq1$,
\begin{equation}\label{B.E}
\sup_{r\in\bbR}\Big|\boldsymbol{\mu}\{(\om,x):\,S_n^\om\bar u_\om(x)\leq r\sqrt{V_n(\om)}\}-\frac1{\sqrt{2\pi}}\int_{-\infty}^re^{-\frac12t^2}dt\Big|\leq cn^{-\frac12}.
\end{equation}
This is not the Berry-Essen theorem for the self normalized sums 
$\big(\text{var}_{\boldsymbol{\mu}}S_n \bar u\big)^{-\frac12}S_n\bar u$.
Obtaining estimates on the left hand side of (\ref{B.E}) with
$\text{var}_{\boldsymbol{\mu}}S_n \bar u=\bbE_PV_n(\om)$ in place of
$V_n(\om)$ requires concentration inequalities of the form
\[
P\big\{\om:\,|V_n(\om)-\bbE_P V_n(\om)|\geq 
c_1n^{\del_1}\big\}\leq c_2n^{-\del_2}
\]
for some $c_1,c_2,\del_1,\del_2>0$. When $\del_1=\del_2\geq\frac12$ this would yield  the rate $n^{-\frac12}$, while in general we would get a (possibly) smaller negative power of $n$. The problem here is that such inequalities do not seem to hold true under general conditions,
as demonstrated in the following. Let $\te$ be a Young tower.
When  $\boldsymbol{\mu}_\om=\mu$ and 
$\boldsymbol{\mu}_\om(u_\om)=\gam$ do not depend on $\om$, set 
$\hat V_n(\om_0,...,\om_{n-1})=\text{var}_{\mu}
(\sum_{j=0}^{n-1}u_{\om_0}\circ T_{\om_j}\circ T_{\om_{j-1}}\circ\cdots\circ T_{\om_0})$. The function $\hat V_n$ is H\"older continuous when the family $\{T_\om(\cdot):\,\om\in\Om\}$ is uniformly H\"older continuous, and $u_\om$ and $T_\om$ are H\"older continuous functions of the variable $\om$. The  H\"older constant of $\hat V_n$ at direction $\om_0$ grows exponentially fast in $n$, which makes it impossible to apply effectively the concentration inequalities of the form used in \cite{Chaz} (see Section \ref{MixSec}), or any other reasonable general type of concentration 
inequality.
\end{remark}

\subsection{Norms estimates for large t's}
We will prove here the following
\begin{lemma}\label{Del n Lemmma}
Suppose that (\ref{Neighb mix}) from Assumption \ref{theta mixing assumption} holds true.
In the non-lattice case, let $J\subset\bbR$ be a compact set not containing the origin. In the lattice case, let $J$ be a compact subset of 
$(-\frac{2\pi}h,\frac{2\pi}h)\setminus\{0\}$. Then
there exist sets $\Del_n=\Del_n(J),\,n\geq 1$ and positive constants $d=d(J)$ 
and $u=u(J)$ so that for any $n\geq1$, 
\begin{equation}\label{DEL PROB}
1-P(\Del_n)\leq\frac{d}{n^\beta}
\end{equation}
and for any $\om\in\Del_n$,
\begin{equation}\label{Norm big t's}
\sup_{t\in J}\|\cA_{it}^{\om,n}\|_{\al,\xi}\leq 4B_0\cdot 2^{-un}
\end{equation}
where $B_0$ comes from (\ref{Norm comparison}). 
\end{lemma}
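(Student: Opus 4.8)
\textbf{Proof plan for Lemma \ref{Del n Lemmma}.}

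The plan is to implement an ``annealed periodic point'' argument, mimicking the quenched periodic-point approach of \cite{book} (Section 2.10, Chapter 7). Fix a compact set $J$ of the required type. Because the case is non-lattice (resp. lattice on the relevant interval), the spectral radius of $R_{it}=\cL_{it}^{\om_0,n_0}$ is strictly less than $1$ for every $t\in J$; since $J$ is compact and $t\mapsto R_{it}$ is continuous, there is some $N\in\bbN$, some $\rho<1$ and, after passing to the operator $\cA$ via (\ref{Norm comparison}), a uniform bound $\sup_{t\in J}\|\cA_{it}^{\om_0,N n_0}\|_{\al,\xi}\le \rho<1$ along the periodic orbit. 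Here I use that $\cA_{it}^{\om_0,n_0}$ is conjugate (up to the harmless bounded factors from (\ref{RPF2}) and (\ref{Norm comparison})) to $R_{it}$, so its iterates contract at the same geometric rate; the number $N$ and $\rho$ depend only on $J$, $\om_0$, $n_0$. Fix $u=u(J)>0$ small enough that $\rho\le 2^{-uNn_0}$.

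Next I would use Assumption \ref{PerPointAss}: since $t\mapsto\cL_{it}^\om$ is uniformly continuous at the points $\te^j\om_0$, $0\le j<n_0$, uniformly for $t\in J$, I can choose neighborhoods $B_{\te^j\om_0}$ of $\te^j\om_0$ so small that whenever $\om',\te\om',\dots,\te^{Nn_0-1}\om'$ lie in the corresponding $B_{\te^j\om_0}$ (cyclically, using $\te^{i}\om_0\in B_{\te^{i\bmod n_0}\om_0}$), the product $\cA_{it}^{\om',Nn_0}$ is within $\tfrac12(1-\rho)$ in operator norm of $\cA_{it}^{\om_0,Nn_0}$, hence $\sup_{t\in J}\|\cA_{it}^{\om',Nn_0}\|_{\al,\xi}\le 2^{-uNn_0}$. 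This is where the continuity hypothesis and the Lasota--Yorke bound (\ref{L.Y.-general}) (to control how errors propagate through a bounded number $Nn_0$ of compositions) are combined; the argument is ``annealed'' only in the sense that the same neighborhoods work for every starting point $\om'$, not just $\om_0$. Write $s=Nn_0/n_0=N$ and let $B=B_{\om_0,n_0,N}=\bigcap_{i=0}^{Nn_0-1}\te^{-i}B_{\te^i\om_0}$ be exactly the set appearing in (\ref{Neighb mix}).

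Now I would define $\Del_n$ to be the set of $\om$ for which the orbit $\om,\te\om,\dots,\te^{n-1}\om$ visits $B$ ``often'': concretely, $\Del_n=\{\om:\ \sum_{j=0}^{n-1}\bbI_{B}(\te^j\om)\ge cn\}$ with the constant $c=c(N,\om_0,n_0)$ from (\ref{Neighb mix}), so that $1-P(\Del_n)\le d/n^\beta$ by (\ref{Neighb mix}); this gives (\ref{DEL PROB}). For $\om\in\Del_n$, I would chop the composition $\cA_{it}^{\om,n}$ into blocks: each time the orbit enters $B$ we may extract a factor $\cA_{it}^{\te^k\om,Nn_0}$ of operator norm $\le 2^{-uNn_0}$, and on the remaining stretches we use the crude bound $\|\cA_{it}^{\cdot,\cdot}\|_{\al,\xi}\le B_2(1+|t|)\le$ const (uniformly for $t\in J$) from Section \ref{Additional}; since there are at least $\lfloor cn/(Nn_0)\rfloor$ disjoint such good blocks, submultiplicativity of the operator norm yields $\sup_{t\in J}\|\cA_{it}^{\om,n}\|_{\al,\xi}\le 4B_0\cdot 2^{-un}$ after absorbing constants into $u$ and shrinking it slightly, which is (\ref{Norm big t's}).

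The main obstacle I anticipate is the bookkeeping in the last paragraph: one must extract the good length-$Nn_0$ blocks from the (possibly irregularly spaced) visit times to $B$ without the leftover stretches destroying the geometric gain. The clean way is to use only a linear-in-$n$ number of pairwise disjoint good blocks and to bound every leftover factor by a fixed constant $C_0=C_0(J)$ (valid for all $t\in J$ thanks to compactness of $J$ and the $(1+|t|)$ bound), so that the total contribution of leftovers is at most $C_0^{\,(\text{number of leftover blocks})}$; choosing $u(J)$ small enough that $2^{-uNn_0}C_0^{Nn_0}<2^{-u'Nn_0}$ for a slightly smaller $u'$ shows the geometric factor survives. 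A secondary point requiring care is the passage between $\cL$ and $\cA$ and between $\cA_{it}^{\om_0,n_0}$ and $R_{it}$: one must verify, using (\ref{RPF2}), (\ref{UnifBound}) and (\ref{Norm comparison}), that the spectral-radius-$<1$ statement for $R_{it}$ transfers to a genuine norm contraction for a suitable iterate of $\cA$ along the periodic orbit, with constants depending only on $J$.
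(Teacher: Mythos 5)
Your proposal implements the same ``annealed periodic point'' argument as the paper's proof: use the aperiodicity hypothesis and compactness of $J$ to get a uniform geometric decay $\|R_{it}^s\|_{\al,\xi}\le Aa^s$ for $t\in J$, then use Assumption \ref{PerPointAss} to push this to a strict norm contraction for $\cL_{it}^{\om,n_0 s}$ on the set $B_{\om_0,n_0,s}$, define $\Del_n$ via the visit-frequency condition so that (\ref{Neighb mix}) gives (\ref{DEL PROB}), extract $\ge [cn/(n_0 s)]-1$ disjoint good blocks of length $n_0 s$, and combine the contraction on good blocks with the uniform Lasota--Yorke bound on the leftover stretches via submultiplicativity. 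The structure and every key ingredient match.

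The one place where your bookkeeping is slightly off is the ``constants survive'' step. You write that the good-block gain must beat ``$C_0^{Nn_0}$,'' but the leftover cost is one factor of $C_0$ per leftover interval, not per time step; since the number of leftover intervals is at most (number of good blocks) $+1$, the relevant condition is simply $\rho'\cdot C_0<1$, where $\rho'$ is the good-block norm and $C_0$ the leftover bound. The paper handles this cleanly by \emph{choosing} $s$ large enough that the good-block bound is $\tfrac{1}{2B_J}$ where $B_J:=\sup_{t\in J}B(1+|t|)$ is exactly the leftover bound, so the product per (good block, leftover) pair is $\tfrac12$ and no further tuning is needed. Your plan does reach the same place (since you allow yourself to take $N$ large and $u$ small), but it would be cleaner to fix the contraction threshold relative to $B_J$ from the start, as the paper does. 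Also note that working with $\cL$ throughout and converting to $\cA$ only once at the very end, via a single application of (\ref{Norm comparison}), avoids any need to track the $\cL\leftrightarrow\cA$ conjugation inside the block decomposition; your plan, which passes to $\cA$ early, is correct but marginally heavier.
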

\begin{proof}
First, by (\ref{L.Y.-general})
there exists a constant $B>0$ so that $P$-a.s.
for any real $t$,
\begin{equation}\label{basic nor, bound, large t's}
\|\cL_{it}^{\om,n}\|_{\al,\xi}\leq B(1+|t|):=B_t.
\end{equation}
Consider the transfer operators $R_{it},\,t\in\bbR$ generated by the function $S_{n_0}^{\om_0}u$ and the map 
$\tau=T_{\om_0}^{n_0}$, namely
\[
R_{it}=\cL_{it}^{\om_0,n_0}.
\]
Observe that for each $t$ the spectral radius of $R_{it}$ does not exceed $1$ since the 
norms $\|\cL_{it}^{\om,n}\|_{\al,\xi}$ are bounded in $\om$ and $n$.
In the non-lattice case, let  $J\subset\bbR$ be 
a compact set not containing $0$, while in the lattice case 
let $J$ be a compact subset of $(-\frac {2\pi}h,\frac{2\pi}h)\setminus\{0\}$. 
Then, in both cases, there there exist constants $A>0$
and $a\in(0,1)$, which may depend on $J$,
so that for any $n\in\bbN$ and $t\in J$,  
\begin{equation}\label{R norms apprx1}
\|R_{it}^n\|_{\al,\xi}\leq Aa^n.
\end{equation}
The above follows from classical spectral analysis type results together with our assumptions 
about the $R_{it}$'s, and we refer the readers' to 
\cite{HenHerve} for the details. Let $J$ be a compact set as described above and set $B_J=\sup_{t\in J}B_t$.
Let $s=s(J)$ be sufficiently large so that $Aa^s<\frac{1}{4B_J}$, 
where $A$ and $a$ satisfy  (\ref{R norms apprx1}). Since Assumption \ref{PerPointAss}
holds true there exist neighborhoods $B_{\te^j\om_0},\,0\leq j<n_0$ of the 
points $\te^j\om_0,\,0\leq j<n_0$, respectively, so that 
\[
\sup_{t\in J}\|\cL_{it}^{\om,n_0s}-R_{it}^s\|_{\al,\xi}\leq\frac1{4B_J}
\]
for any 
$
\om\in B_{\om_0,n_0,s}:=\bigcap_{i=0}^{sn_0-1}\te^{-i}B_{\te^{i}\om_0},
$
where we also used that $R_{it}^s=\cL_{it}^{\om_0,sn_0}$.
It follows that for any $\om\in B_{\om_0,n_0,s}$,
\begin{equation}
\sup_{t\in J}\|\cL_{it}^{\om,n_0s}\|_{\al,\xi}\leq\frac1{2B_J}.
\end{equation}
Set 
\begin{equation}\label{Del n def.}
\Del_n=\{\om\in\Om: \sum_{j=0}^{n-1}\bbI_{B_{\om_0,n_0,s}}(\te^j\om)>cn\}
\end{equation}
where $c$ comes from (\ref{theta mixing assumption}).
Then 
\begin{equation}\label{DEL PROB.0}
1-P(\Del_n)\leq\frac{d}{n^\beta}.
\end{equation}
We also set 
\[
\textbf{B}_{\om_0,n_0,s}=\{(u_i)\in\Om^{sn_0}:\,u_i\in B_{\te^i\om_0}\,\,\,\forall\, 1\leq i\leq sn_0\}.
\]
Then $\om'\in B_{\om_0,n_0,s}$ if and only if the "word" $(\om',\te\om',...,\te^{sn_0-1}\om')$
belongs to $\textbf{B}_{\om_0,n_0,s}$. 
When $\om\in\Del_n$ the word $(\om,\te\om,...,\te^{n-1}\om)$ contains at least $[\frac{nc}{n_0s}]-1$
disjoint sub-words 
\[
\om(q,sn_0)=(\te^{q+1}\om,...,\te^{q+sn_0}\om)
\]
which are contained in  $\textbf{B}_{\om_0,n_0,s}$. Namely, there exist indexes $q_1,q_2,...,q_L,\,L\geq [\frac{nc}{n_0s}]-1$ so that 
each $\om(q_i,sn_0)$ is a member of $\textbf{B}_{\om_0,n_0,s}$ and $q_i+sn_0<q_{i+1}$ for all $i=1,2,...,L-1$.
As a consequence, on $\Del_n$ we can write
\[
\cL^{\om,n}_{it}=\cB_{t,1}^\om\circ \cC_{t,1}^\om\circ\cB_{t,2}^\om\circ
\cC_{t,j}^\om\circ\cdots\circ\cC_{t,L}^\om\circ\cB_{t,L+1}^\om
\]
where $L=L_{\om,n}\geq[\frac{cn}{n_0s}]-1$, 
each $\cC_{t,j}^{\om}$ satisfies 
\[
\sup_{t\in J}\|\cC_{t,j}^\om\|_{\al,\xi}\leq\frac1{2B_J}
\] 
while each $\cB_{t,j}^\om$ satisfies
\[
\sup_{t\in J}\|\cB_{t,j}^\om\|_{\al,\xi}\leq B_J.
\]
We conclude from the above estimates that on $\Del_n$ we have
\begin{equation}\label{Norm big t's.1}
\sup_{t\in J}\|\cA_{it}^{\om,n}\|_{\al,\xi}\leq B_0
\sup_{t\in J}\|\cL_{it}^{\om,n}\|_{\al,\xi}\leq 4B_0\cdot 2^{-un}
\end{equation}
where $u=\frac{c}{n_0s}$, and we also used (\ref{Norm comparison}).
\end{proof}

\subsection{Proof of the LLT}
By Theorem \ref{CLT}  and exactly as in the proof of Theorem 2.3 in \cite{book}, it is sufficient to show that Assumptions 2.2.1 and 2.2.2 from \cite{book} hold true. These assumptions are verified using the inequality
\begin{eqnarray*}
|\bbE e^{it S_n}|=\big|\int\mu_{\om}(\cA_{it}^{\te^{-n}\om,n}\textbf{1})dP(\om)\big|=
\big|\int\mu_{\te^n\om}(\cA_{it}^{\om,n}\textbf{1})dP(\om)\big|\\
\leq 1-P(\Gam)
+\Big\|\bbI_\Gam(\om)\|\cA_{it}^{\om,n}\|_{\al,\xi}\Big\|_{L^\infty(\Om,\cF,P)}
\end{eqnarray*}
applied with either $\Gam=\Gam_n$ defined before (\ref{First Gamma props1})
or with $\Gam=\Del_n$ from Lemma \ref{Del n Lemmma}, taking into account that  $\beta>\frac12$.


\subsection{Proof of the renewal theorem}\label{Renewal Section}
In this section we will always assume that the conditions of Theorem \ref{renewal-D.S.} hold
true. In the course of the proof, when it is more convenient, will use the notation $\bbE_P$
to denote integration over $\Om$ with respect to $P$.
Let $\mathscr H_1$ be the space of all continuous and integrable functions $g:\bbR\to\bbC$ whose Fourier 
transforms are continuously differentiable with compact support.   
Then by Lemma IV.5 in \cite{HenHerve},
it is sufficient in (\ref{renewal equations}) from Theorem \ref{renewal-D.S.}
to consider only functions $g\in\mathscr H_1$ which are dominated by some positive element of $\mathscr H_1$. Note
that such functions satisfy the Fourier inversion formula.
For any $\rho\in(0,1)$ set 
\begin{eqnarray*}
U_\rho(B)=\sum_{n\geq1}\rho^{n-1}\int\mu_{\om}
\big(f_{\te^{-n}\om}(\xi_n^{\te^{-n}\om})\bbI_B(S_n^\om)\big)dP(\om)\\=
\sum_{n\geq1}\rho^{n-1}\int\bbE_{\mu_{\te^n\om}}[f_{\om}(\xi_n^{\om})\bbI_B(S_n^{\te^n\om})]dP(\om).
\end{eqnarray*}
Since $\bbE_P\|f_\om\|_\infty<\infty$, it is clear that $U_\rho$ is a finite measure 
for each $\rho\in(0,1)$.
Let $g$  a function of the described above form and suppose that its Fourier transform 
$\hat g$ vanishes outside the compact interval $[-b,b]$.
 In the non-lattice case set
\[
V_\rho (g)=\frac{1}{2\pi}\int_{-\del_0}^{\del_0}
\hat g(t)\bbE_P\big[\nu_\om(it)(f_\om)f_\rho^\om(t)\big]dt
\]
where
\[
f_\rho^\om(t)=\sum_{n\geq1}\rho^{n-1}\la_{\om,n}(it)\mu_{\te^{n}\om}(h_{\te^{n}\om}(it))
\] 
which by  (\ref{UnifBound}) and (\ref{lam basic bound}) converges uniformly over 
$\om$ and $t\in[-\del_0,\del_0]$.
In the lattice case we set
\[
V_\rho(g)=\frac{1}{2\pi}\int_{-\del_0}^{\del_0}r(t)\bbE_P\big[\nu_\om(it)(f_\om)f_\rho^\om(t)\big]dt
\]
where $r(t)=\sum_{k\in\bbZ}\hat g(t+\frac{2\pi k}{h})$. 
Note that by the so-called Poisson summation formula (see Ch. 10 in \cite{Brei}) we have
$r(0)=\sum_{k}\hat g(\frac{2\pi k}h)=\int gd\ka_h=\ka_h(g)$.


\begin{lemma}\label{VII.3}
In both the lattice and non-lattice cases,
there exist integrable functions $R_1$ and $R_2$ on $\bbR$ so that
\[
\lim_{\rho\to1}\big(U_\rho(g)-V_\rho(g)\big)=\int_{\{t: |t|>\del_0\}}e(t)R_1(t)dt+
\int_{-\del_0}^{\del_0}e(t)R_2(t)dt
\]
where in the non-lattice case $e(t)=\hat g(t)$ while in the lattice case 
$e(t)=r(t)$. 
\end{lemma}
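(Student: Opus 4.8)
The plan is to start from the characteristic-function identity $\bbE e^{itS_n}=\int\boldsymbol{\mu}_{\te^n\om}(\cA_{it}^{\om,n}\textbf{1})dP(\om)$ and, more precisely, from the weighted version built into $U_\rho(g)$. Using the Fourier inversion formula (valid for the class $\mathscr H_1$), I would write, for $g$ with $\hat g$ supported in $[-b,b]$,
\[
U_\rho(g)=\frac{1}{2\pi}\sum_{n\geq1}\rho^{n-1}\int_{-b}^{b}\hat g(t)\,\bbE_P\big[\nu_{\om}(it)(f_\om)\,\boldsymbol{\mu}_{\te^n\om}(\cA_{it}^{\om,n}\textbf{1})\big]\,dt
\]
after inserting the relation between $f$ and the Markov-chain expectations and using that $\boldsymbol{\mu}_\om(f_\om)=\nu_\om(0)(f_\om)$ together with the analytic dependence on $t$. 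Then I would split the $t$-integral into $|t|\leq\del_0$ and $|t|>\del_0$, where $\del_0$ is the radius from the RPF neighborhood $U$ (small-$t$ regime). On $|t|>\del_0$ the norm estimates of Lemma \ref{Del n Lemmma} together with the crude bound $\|\cA_{it}^{\om,n}\|_{\al,\xi}\leq B_2(1+|t|)$ off the good sets $\Del_n$ give, after summing the geometric-type series in $n$ (using $1-P(\Del_n)\leq d/n^\beta$ with $\beta>1$, so $\sum_n (1-P(\Del_n))<\infty$, and the exponential decay $2^{-un}$ on $\Del_n$), a limit as $\rho\to1$ of the form $\int_{\{|t|>\del_0\}}e(t)R_1(t)dt$ with $R_1$ integrable; this is where $\beta>1$ is essential.

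On $|t|\leq\del_0$ I would use the RPF decomposition $\cA_{it}^{\om,n}\textbf{1}=\la_{\om,n}(it)\big(\nu_\om(it)(\textbf{1})h_{\te^n\om}(it)+O(c^n)\big)$ from (\ref{Exp Conv final.0}), so that
\[
\boldsymbol{\mu}_{\te^n\om}(\cA_{it}^{\om,n}\textbf{1})=\la_{\om,n}(it)\,\boldsymbol{\mu}_{\te^n\om}(h_{\te^n\om}(it))+\la_{\om,n}(it)\,O(c^n),
\]
since $\nu_\om(it)(\textbf{1})=1$. The main term, summed against $\rho^{n-1}$, produces exactly $f_\rho^\om(t)$ and hence the quantity $V_\rho(g)$ (with $e(t)=\hat g(t)$ in the non-lattice case, and with the periodization $r(t)$ absorbing the lattice sum in the lattice case via Poisson summation). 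The remainder term, using $|\la_{\om,n}(it)|\leq A B_2$ from (\ref{lam basic bound}) and $\sum_n c^n<\infty$, contributes $\int_{-\del_0}^{\del_0}e(t)R_2(t)dt$ with $R_2$ bounded, hence integrable on the compact interval; I also need to check that swapping $\lim_{\rho\to1}$ with the sum and integral is legitimate, which follows from dominated convergence using the uniform-in-$\om,t$ bounds quoted after the definition of $f_\rho^\om(t)$.

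The step I expect to be the main obstacle is the $|t|>\del_0$ estimate: one must show that $\lim_{\rho\to1}\big(U_\rho(g)\big|_{|t|>\del_0}\big)$ exists and is given by an \emph{integrable} density $R_1$, rather than merely being finite for each $\rho<1$. The difficulty is that off the sets $\Del_n$ we only have the polynomial-in-$t$ bound $B_2(1+|t|)$ and the merely polynomial probability bound $1-P(\Del_n)\leq d/n^\beta$; getting an honest integrable-in-$t$ limiting density requires carefully decomposing $\bbE_P[\cdots]=\bbE_P[\bbI_{\Del_n}\cdots]+\bbE_P[\bbI_{\Del_n^c}\cdots]$, controlling the $\Del_n^c$ part by $\|f_\om\|_\infty$ in $L^p$ (here the hypothesis $\beta(1-\tfrac1p)>1$ and Hölder's inequality enter, so that $\sum_n (1-P(\Del_n))^{1-1/p}\||f_\om\|_\infty\|_p<\infty$), and then using $\hat g$ (or $r$) to provide integrability in $t$ on the noncompact region $\{|t|>\del_0\}$ because $\hat g$ has compact support $[-b,b]$ — so in fact the region is the bounded annulus $\del_0<|t|\le b$ and integrability of $R_1$ there is automatic once boundedness is established. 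The bookkeeping of the word-decomposition lengths $L_{\om,n}\geq \lceil cn/(n_0 s)\rceil-1$ from Lemma \ref{Del n Lemmma}, propagated through the $\rho$-weighted sum, is the routine-but-delicate part.
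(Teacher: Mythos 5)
Your overall blueprint — Fourier inversion, split the $t$-integral at $\del_0$, RPF decomposition for $|t|\le\del_0$, and Lemma \ref{Del n Lemmma} together with H\"older for $\del_0<|t|\le b$ — is the same as the paper's. But your opening identity for $U_\rho(g)$ is false, and it is not a cosmetic slip: Fourier inversion yields, as in (\ref{I.0}),
\[
U_\rho(g)=\frac{1}{2\pi}\int_{-b}^{b}\hat g(t)\sum_{n\geq1}\rho^{n-1}\bbE_P\big[\mu_{\te^n\om}(\cA_{it}^{\om,n}f_\om)\big]\,dt,
\]
and the scalar $\nu_\om(it)(f_\om)$ cannot be pulled out in front of $\mu_{\te^n\om}(\cA_{it}^{\om,n}\textbf{1})$ before any RPF step has been applied; neither ``inserting the relation between $f$ and the Markov-chain expectations'' nor ``analytic dependence on $t$'' produces that factorization. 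The factor $\nu_\om(it)(f_\om)$ only appears by applying (\ref{Exp Conv final.0}) with $q=f_\om$ (not $q=\textbf{1}$), namely
\[
\big\|\cA_{it}^{\om,n}f_\om-\la_{\om,n}(it)\nu_\om(it)(f_\om)h_{\te^n\om}(it)\big\|_{\al,\xi}\leq C|\la_{\om,n}(it)|\,\|f_\om\|_{\al,\xi}\,c^n.
\]
Because you instead apply RPF to $\cA_{it}^{\om,n}\textbf{1}$, your remainder $R_2$ has no dependence on $\|f_\om\|_{\al,\xi}$ and you have no reason to integrate $\|f_\om\|_{\al,\xi}$ against $P$; the paper's estimate (\ref{R1 bound}), on which the uniform convergence of $R_{2,\rho}$ to $R_{2,1}$ rests, is exactly where the integrability of $\|f_\om\|_{\al,\xi}$ enters. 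Once the correct RPF application (to $f_\om$) replaces your factorization, the main term reproduces $V_\rho(g)$, the remainder gives $R_2$, and your large-$t$ argument via Lemma \ref{Del n Lemmma} and H\"older (with $\be(1-\frac1p)>1$) matches the paper.

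Two smaller remarks. First, the lattice case is handled by observing that $L_\rho(t)=\sum_n\rho^{n-1}\bbE_P[\mu_{\te^n\om}(\cA_{it}^{\om,n}f_\om)]$ is $\frac{2\pi}{h}$-periodic and rewriting the integral over a single period with $r(t)$; Poisson summation is used only to identify $r(0)=\ka_h(g)$, not to ``absorb the lattice sum.'' Second, the ``word-decomposition lengths'' you worry about propagating are entirely internal to the proof of Lemma \ref{Del n Lemmma}; here you only use its conclusion, namely $1-P(\Del_n)\le d n^{-\be}$ and the bound $\sup_{t\in J}\|\cA_{it}^{\om,n}\|_{\al,\xi}\le 4B_0\cdot 2^{-un}$ on $\Del_n$, so no further bookkeeping of $L_{\om,n}$ is needed.
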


\begin{proof}
For any $0<\rho<1$ set 
\[
L_\rho(t)=\sum_{n\geq 1}\rho^{n-1}\bbE_P\big[\mu_{\te^n\om}(\cA_{it}^{\om,n}f_{\om})\big].
\]
Since $\|\cA_{it}^{\om,n}\|_\infty
\leq\|\cA_{0}^{\om,n}\textbf{1}\|_\infty=1$ and $\|f_{\om}\|_\infty$ is integrable
it follows that
the series $L_\rho^\om(\cdot)$ converges uniformly on $\bbR$.
Consider first the non-lattice case. 
Then by the inversion formula of the Fourier transform (applied with the 
function $g$) and the Fubini theorem, for any
$n\geq1$,
\begin{equation}\label{I.0}
\int\mu_{\te^n\om}
\big(f_{\om}(\xi_n^{\om})g(S_n^{\te^n\om})\big)dP(\om)=
\frac1{2\pi}\int_{-b}^b\hat g(t)\bbE_P\big[\mu_{\te^n\om}(\cA_{it}^{\om,n}f_{\om})\big]dt
\end{equation}
and therefore,
\begin{equation}\label{I}
U_\rho(g)=\frac1{2\pi}\int_{-b}^b\hat g(t)L_\rho(t)dt.
\end{equation}
Next, by  (\ref{Exp Conv final.0}) there exists a constant $C>0$ so that
for any $t\in[-\del_0,\del_0]$,
\begin{equation}\label{R1 bound}
\sum_{n\geq1}\bbE_P\|\cA_{it}^{\om,n}f_{\om}-
\la_{\om,n}(it)\nu_{\om}(it)(f_\om)h_{\te^n\om}(it)\|_{\al,\xi}\leq C
\end{equation}
where we used that $\|f_\om\|_{\al,\xi}$ is $P$-integrable and that 
$|\la_{\om,n}(it)|$ is bounded in $\om,n$ and $t\in[-\del_0,\del_0]$
(see (\ref{lam basic bound})). In fact, (\ref{Exp Conv final.0}) also implies that 
the series on the left hand side of (\ref{R1 bound}) converges uniformly over $t\in[-\del_0,\del_0]$.
As a consequence, we can write
\[
2\pi\big(U_\rho(g)-V_\rho(g)\big)=\int_{\del_0<|t|\leq b}\hat g(t)L_\rho(t)dt+
\int_{-\del_0}^{\del_0}\hat g(t)R_{2,\rho}(t)dt
\]
where 
\[
R_{2,\rho}(t)=\sum_{n\geq1}\rho^{n-1}\bbE_P\mu_{\te^n\om}\big(\cA_{it}^{\om,n}f_{\om}-
\la_{\om,n}(it)\nu_{\om}(it)(f_\om)h_{\te^n\om}(it)\big)
\]
which converges as $\rho\to1$ to $R_{2,1}(t)$ uniformly over $t\in[-\del_0,\del_0]$. It is clear
by (\ref{R1 bound}) that
$\sup_{t\in[-\del_0,\del_0]}|R_{2,1}(t)|\leq C$.
We claim next that $L_\rho(t)$ converges uniformly on
$[-b,b]\setminus[-\del_0,\del_0]$ towards a limit $R_1(t)$ which is a bounded
function of $t$ on this domain. This will complete the proof of the lemma in the non-lattice case. Indeed, set $J=[-b,b]\setminus[-\del_0,\del_0]$ and let $\Del_n=\Del_n(J)$ be the sets from Lemma \ref{Del n Lemmma}. Then by (\ref{DEL PROB}) and (\ref{Norm big t's})
there exist constants $d=d(J)>0$ and $u=u(J)>0$ so that 
$P(\Del_n)\geq 1-dn^{-\be}$ and for any $\om\in\Del_n$,
\[
\sup_{t\in J}\|\cA_{it}^{\om,n}\|_{\al,\xi}\leq 4B_0\cdot 2^{-un}.
\]
Therefore, with $\|\cdot\|=\|\cdot\|_{\al,\xi}$, for any $t\in J$ and $m\geq1$ we have
\begin{eqnarray*}
\sum_{n\geq m}|\bbE_P\mu_{\te^n\om}(\cA_{it}^{\om,n}f_{\om})|\leq
\sum_{n\geq m}\bbE_P\|f_\om\|\cdot\|\cA_{it}^{\om,n}\|\\\leq
\sum_{n\geq m}\int_{\Del_n^c}\|f_\om\|\cdot\|\cA_{it}^{\om,n}\|dP(\om)+
4B_0\sum_{n\geq m}2^{-un}\int_{\Del_n}\|f_\om\|dP(\om)\\\leq
B_J\sum_{n\geq m}\int_{\Del_n^c}\|f_\om\|dP(\om)
+4B_0\bbE_P\|f_\om\|\sum_{n\geq m}2^{-un}
\end{eqnarray*}
where $\Del_n^c=\Om\setminus\Del_n$ and $B_J$ is defined after (\ref{basic nor, bound, large t's}).
Set $\|f\|_{p,\al,\xi}^p:=\int \|f_\om\|^p dP(\om)<\infty$.
Applying the H\"older inequality yields
\[
\int_{\Gam_n^c}\|f_\om\|dP(\om)\leq d^{1-\frac1p}\|f\|_{p,\al,\xi}n^{-\be(1-\frac 1p)}.
\]
Since $\be(1-\frac 1p)>1$ the series defined by the above right hand side (with $n=1,2,...$)
converges, and hence the (uniform) limit
of $L_\rho(t)$ as $\rho\to 1$ exists on $[-\del_0,\del_0]$.

Next, in the lattice case we proceed in a slightly different way. We first rewrite
(\ref{I}) as 
\[
U_\rho(g)=\frac1{2\pi}\int_{-\frac\pi h}^{\frac\pi h}r(t)L_\rho(t)dt
\]
where we used that $L_\rho$ is $\frac{2\pi}{h}$ periodic which holds true since
\[
\mu_{\te^n\om}(\cA_{it}^{\om,n}f_\om)=\bbE[f_\om(\xi_n^{\om})e^{itS_n^{\te^n\om}}]
\]
which is indeed a $\frac{2\pi}{h}$-periodic function of $t$  in view of the lattice assumption.
The proof proceeds now in a similar way taking compact subintevrals  $J$  of 
$(-\frac{2\pi}h,\frac{2\pi}h)$ which do not contain the origin.
\end{proof}

Now, we prove
\begin{lemma}\label{VII.4}
If $\del_0$ is sufficiently small then
there exists an integrable function $R_3$ on $[-\del_0,\del_0]\setminus\{0\}$ so that
with $g_a(x)=g(x-a)$, for any $a\in\bbR\setminus\{0\}$,
\[
\lim_{\rho\to1}2\pi V_\rho(g_a)=\int_{-\del_0}^{\del_0}e^{-ita}R_3(t)dt+
\frac{\boldsymbol{\mu}(0)(f)}{\gam}\ka_h(g)\big(\pi+\int_{-\del_0a}^{\del_0a}\frac{\sin t}t dt\big)
\]
where in the lattice case $\ka_h$ is the measure assigning unit mass to each point 
of the lattice $h\bbZ$, while in the non-lattice case $\ka_0$ is the Lebesgue measure.
\end{lemma}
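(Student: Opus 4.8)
The plan is to analyze $V_\rho(g_a)$ by computing the $\rho\to1$ limit of the inner series $f_\rho^\om(t)$ and then performing a delicate contour/Fourier analysis of the resulting integral. First I would use that $g_a$ has Fourier transform $\hat g_a(t)=e^{-ita}\hat g(t)$ (and in the lattice case $r_a(t)=e^{-ita}r(t)$), so that
\[
2\pi V_\rho(g_a)=\int_{-\del_0}^{\del_0}e^{-ita}e(t)\bbE_P\big[\nu_\om(it)(f_\om)f_\rho^\om(t)\big]dt
\]
with $e(t)=\hat g(t)$ or $r(t)$. The key structural input is that, by Theorem \ref{RPF rand T.O. general}, $\la_\om(it)$ is analytic near $0$ with $\la_\om(0)=1$ and $\Pi_{\om,n}'(0)=\int S_n^\om u\,d\nu_\om(0)$; under the hypothesis $\boldsymbol{\mu}_\om(0)(u_\om)=\gam>0$, the logarithmic derivative of $\la_{\om,n}(it)$ at $0$ is asymptotic to $in\gam$. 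Summing the geometric-type series $f_\rho^\om(t)=\sum_{n\geq1}\rho^{n-1}\la_{\om,n}(it)\mu_{\te^n\om}(h_{\te^n\om}(it))$ should, after using the RPF contraction (\ref{Exp Conv final.0}) to replace the $h$-factor by its limiting behaviour and absorbing the error into an integrable remainder $R_3(t)$, produce a main term of the shape $\dfrac{c_\om(t)}{1-\rho\la_{\om,n}\text{-average}}$, whose $\rho\to1$ singularity at $t=0$ is governed by $\dfrac{1}{1-e^{i\gam t}}\sim\dfrac{1}{-i\gam t}$.

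Next I would carry out the averaging over $\om$: since $\nu_\om(0)=\boldsymbol{\mu}_\om(0)$ and $\nu_\om(0)(f_\om)=\boldsymbol{\mu}(f)$ is independent of $\om$ (a hypothesis of Theorem \ref{renewal-D.S.}), the leading coefficient collapses to $\boldsymbol{\mu}(0)(f)$, while the $\om$-dependence of $\la_{\om,n}$ and $h_{\te^n\om}$ is controlled by Lemmas \ref{Lemma one}, \ref{Press der general} and \ref{Remainder Corollary 1} together with Assumption \ref{theta mixing assumption}; the pieces where the variance is small or the periodic-orbit neighbourhoods are under-visited contribute only through sets of small $P$-measure and get dumped into $R_3$. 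Then the limit integral becomes
\[
\frac{\boldsymbol{\mu}(0)(f)}{\gam}\int_{-\del_0}^{\del_0}e^{-ita}\,e(t)\cdot\frac{\gam}{2\pi i}\cdot\frac{1}{t}\,dt+\int_{-\del_0}^{\del_0}e^{-ita}R_3(t)\,dt,
\]
and the first integral is a standard Dirichlet-type integral: using $e(0)=\ka_h(g)$ (for $e=\hat g$ this is $\hat g(0)=\int g=\ka_0(g)$; for $e=r$ this is the Poisson-summation identity $r(0)=\ka_h(g)$ already noted in the text) and the classical evaluation
\[
\frac1{2\pi i}\int_{-\del_0}^{\del_0}\frac{e^{-ita}}{t}\,dt=\frac1\pi\int_0^{\del_0}\frac{\sin(ta)}{t}\,dt=\frac1{2\pi}\,\mathrm{sgn}(-a)\int_{-\del_0|a|}^{\del_0|a|}\frac{\sin t}{t}\,dt,
\]
one rewrites $-\mathrm{sgn}(a)\int_0^{\del_0 a}\frac{\sin t}{t}dt$ as $\frac1{2}\big(\mathrm{sgn}(a)\pi - \int_{-\del_0 a}^{\del_0 a}\tfrac{\sin t}{t}dt\big)$ up to sign bookkeeping, and the part of $e(t)-e(0)$ that is $O(|t|)$ near $0$ (recall $\hat g$, being a Fourier transform of a nice function, is $C^1$) kills the $1/t$ singularity and is reabsorbed into $R_3$. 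Matching signs carefully yields exactly the asserted expression $\frac{\boldsymbol{\mu}(0)(f)}{\gam}\ka_h(g)\big(\pi+\int_{-\del_0 a}^{\del_0 a}\frac{\sin t}{t}dt\big)$, which correctly has limit $0$ as $a\to-\infty$ and $\frac{\boldsymbol{\mu}(0)(f)}{\gam}\ka_h(g)\cdot 2\pi$ as $a\to+\infty$ — consistent with the renewal equations (\ref{renewal equations}) once combined with Lemma \ref{VII.3}.

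The main obstacle I anticipate is the uniform (in $\om$ and $\rho$) control of the series $f_\rho^\om(t)$ near $t=0$, i.e. justifying that $\sum_n\rho^{n-1}\la_{\om,n}(it)\mu_{\te^n\om}(h_{\te^n\om}(it))$ behaves like a single geometric series with ratio $\approx e^{i\gam t}$ rather than a product of genuinely $\om$-dependent factors $\la_{\te^j\om}(it)$ whose phases need not telescope cleanly. Extracting the clean pole requires writing $\la_{\om,n}(it)=e^{\Pi_{\om,n}(it)}=e^{it\Pi_{\om,n}'(0)+O(nt^2)}$ via Lemma \ref{Remainder Corollary 1}, noting $\Pi_{\om,n}'(0)=n\gam+o(n)$ by the ergodic theorem applied to $\Pi_{\om,n}'(0)=\int S_n^\om u\,d\nu_\om(0)$, and then controlling the error in summing $\sum_n\rho^{n-1}e^{i n\gam t}(1+\text{small})$. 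The $o(n)$ fluctuations in $\Pi_{\om,n}'(0)$ are exactly the kind of non-uniformity flagged in the introduction, so the bookkeeping separating the genuine pole contribution (which survives averaging) from the fluctuating part (which must be shown integrable and absorbed into $R_3$) is the technical heart; everything else is a matter of assembling the estimates (\ref{Exp Conv final.0}), (\ref{UnifBound}), (\ref{lam basic bound}), Lemmas \ref{Press der general}–\ref{Remainder Corollary 1} and the probability bounds of Assumption \ref{theta mixing assumption} and invoking the elementary sine-integral computation.
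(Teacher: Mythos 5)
Your plan correctly identifies the key pole $\frac{1}{1-\rho\la_{\cdot}(it)}\sim\frac{1}{-i\gam t}$, the role of $e(0)=\ka_h(g)$ (including the Poisson-summation identity $r(0)=\ka_h(g)$), the fact that $e(t)-e(0)=O(|t|)$ tames the singularity, and the sine-integral flavour of the final answer. But there are two genuine gaps that prevent it from being completed as written.

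First, you propose to expand $\la_{\om,n}(it)=e^{\Pi_{\om,n}(it)}\approx e^{in\gam t}$ and sum the series $f_\rho^\om(t)$ as an approximate geometric series. You yourself flag that the $o(n)$ fluctuations in $\Pi_{\om,n}'(0)$ are ``the technical heart,'' but the plan contains no mechanism for controlling them uniformly enough to pass the limit $\rho\to 1$ inside the $t$-integral. This is exactly the obstruction the paper sidesteps with an algebraic recursion rather than asymptotics: one has the exact one-step identity $\rho\la_{\te^{-1}\om}(it)f_\rho^\om(t)=f_{\rho}^{\te^{-1}\om}(t)-\la_{\te^{-1}\om}(it)\mu_\om(h_\om(it))$, from which
\[
f_\rho^\om(t)
=\frac{\la_{\te^{-1}\om}(it)\mu_\om(h_\om(it))}{1-\rho\la_{\te^{-1}\om}(it)}
+\frac{f_\rho^\om(t)-f_{\rho}^{\te^{-1}\om}(t)}{1-\rho\la_{\te^{-1}\om}(it)},
\]
so the pole is governed by a \emph{single} factor $\la_{\te^{-1}\om}(it)$ (with $|1-\rho\la_{\te^{-1}\om}(it)|\geq a_0|t|$ uniformly in $\om,\rho$), and the second term has zero $P$-mean by $\te$-invariance and is bounded by a telescoping argument in which each increment $\cD_{\om,n}(t)$ is $O(t^2\|\la_{\om,n}(it)\|_q)$. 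This, combined with Assumption \ref{theta mixing assumption}, is precisely what turns the fluctuations into an integrable $R_3$; your proposal asserts that such bookkeeping can be done but doesn't supply the identity that makes it possible.

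Second, the evaluation of the leading term is off. The principal value $\frac{1}{2\pi i}\int_{-\del_0}^{\del_0}\frac{e^{-ita}}{t}\,dt$ gives only a sine integral, not the asserted $\pi+\int_{-\del_0 a}^{\del_0 a}\frac{\sin t}{t}\,dt$. The constant $\pi$ arises from a genuine half-residue because the pole of $\frac{1}{1-\rho(1+i\gam t)}$ sits at $t=\frac{1-\rho}{i\gam\rho}$, off the real axis, and approaches $0$ from one side as $\rho\to1$; the correct expression comes from evaluating $J(a,\rho)=\int_{-\del_0}^{\del_0}\frac{e^{-ita}}{1-\rho(1+i\gam t)}\,dt$ and letting $\rho\to1$, not from replacing the kernel by $\frac{1}{-i\gam t}$ pointwise before integrating. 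So, beyond the missing recursion, even your claimed sine-integral identity would need to be replaced by the $J(a,\rho)$ computation to produce the $\pi$ term.

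In short, the proposal is directionally sensible but misses both the cohomological/recursive identity that makes the $\rho\to1$ limit tractable and the correct (pole-off-axis) evaluation of the leading integral; as stated, it cannot be turned into a complete proof.
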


\begin{proof}
First, we have $\hat g_a(t)=e^{-ita}\hat g(t)$ and $\sum_{k}\hat g_a(t+\frac{2\pi k}h)=e^{-ita}r(t)$.
 Observe that
\[
\rho\la_{\te^{-1}\om}(it)f_\rho^\om(t)=f_{\rho}^{\te^{-1}\om}(t)-\la_{\te^{-1}\om}(it)\mu_\om(h_\om(it))
\]
and therefore,
\begin{equation}\label{f rho repres}
f_\rho^\om(t)
=\frac{\la_{\te^{-1}\om}(it)\mu_\om(h_\om(it))}{1-\rho\la_{\te^{-1}\om}(it)}+
\frac{f_\rho^\om(t)-f_{\rho}^{\te^{-1}\om}(t)}{1-\rho\la_{\te^{-1}\om}(it)}.
\end{equation}
Since $\la_\om'(0)=\gam>0$ we obtain from (\ref{UnifBound}) that there exists a
constant $a_0>0$ so that
for any sufficiently small $\del_0$,  $t\in[-\del_0,\del_0]$ and $\frac12<\rho\leq 1$,
\begin{equation}\label{lam Tay}
|1-\rho\la_{\te^{-1}\om}(it)|\geq a_0|t|
\end{equation}
and so, the above decomposition of $f_\rho^\om(t)$ makes sense when $\rho>\frac12$
and $t\not=0$. 
Set 
\[
\cD_{\om,n}(t):=\la_{\om,n}(it)\mu_{\te^n\om}(h_{\te^n\om}(it))-
\la_{\te^{-1}\om,n}(it)\mu_{\te^{n-1}\om}(h_{\te^{n-1}\om}(it)).
\]
Then
\[
f_\rho^{\om}(t)-f_\rho^{\te^{-1}\om}(t)=\sum_{n\geq 1}\rho^{n-1}\cD_{\om,n}(t).
\]
Write
\begin{eqnarray*}
\cD_{\om,n}(t)=\la_{\om,n}(it)\big(\la_{\te^{n-1}\om}(it)\big)^{-1}\times\\
\big(\mu_{\te^n\om}(h_{\te^n\om}(it))\la_{\te^{n-1}\om}(it)-
\mu_{\te^{n-1}\om}(h_{\te^{n-1}\om}(it))\la_{\te^{-1}\om}(it)\big).
\end{eqnarray*}
As in the proof of Lemma 7.2.1 in \cite{book}, differentiating the equality
$\nu_\om(z)(h_\om(z))$ at $z=0$ and using that $h_\om(0)\equiv\textbf{1}$ and $\nu_\om(z)\textbf{1}=1$ yields
that $\boldsymbol{\mu}_\om(0)(h_\om'(0))=\nu_\om(0)(h_\om'(0))=0$.
By (\ref{UnifBound}) and since $\la_\om'(0)=\gam>0$,  when $\del_0$ is sufficiently
small then the absolute value of the term inside the brackets above does not exceed 
$c_0t^2$ for some constant $c_0$ which does not depend on $\om,n$ and 
$t$. Indeed, this term  has bounded second derivatives, it vanishes at $t=0$ and
its first derivative at $0$ equals $0$. Now, by (\ref{lam Tay})
there exists a constant $c_1>0$  so that $|\la_{\te^{n-1}\om}(it)|\geq c_1$
for any $n\geq 1$ and $t\in[-\del_0,\del_0]$. 
We conclude that there exist constants $c_2$ and $c_2'$ so that for any $t\in[-\del_0,\del_0]$, $1\leq q<\infty$ and $n\geq1$,
\begin{equation}\label{Good Exp1}
\|\cD_{\om,n}(t)\|_q
\leq c_2't^2\|\la_{\om,n}(it)\|_q\leq c_2t^2(P(\Gam_n^c))^{\frac1q}
+c_2t^2e^{-nd_2t^2}
\end{equation}
where $\|X\|_q=\|X\|_{L^q(\Om,\cF,P)}$ for any random variable $X$, the set $\Gam_n^c$ is the compliment of the set
satisfying (\ref{First Gamma props1}) and (\ref{First Gamma props2})
 and we also used (\ref{lam basic bound}). 
In particular,  by the H\"older inequality for any $Y\in L^p(\Om,\cF,P)$, 
$1<p\leq\infty$,
\begin{equation}\label{Good Exp2}
E|Y\cD_{\om,n}(t)|\leq \|Y\|_p\big(c_2t^2(P(\Gam_n^c))^{1-\frac1p}
+c_2t^2e^{-nd_2t^2}\big).
\end{equation}
Since $\be(1-\frac1p)>1$ and $t^2\sum_{n\geq1}e^{-nd_2t^2}$ is bounded in $t\in[-\del_0,\del_0]$, we obtain from (\ref{Good Exp2}) that
\begin{equation}\label{Good Exp3}
\bbE_P|Y(f_\rho^{\om}(t)-f_\rho^{\te^{-1}\om}(t))|\leq C(1+\|Y\|_p)
\end{equation}
where $C>0$ is a constant which does not depend on $Y$
and $t$.
Next, for any $t\in[-\del_0,\del_0]$ set $\psi^\om(t)=e(t)\la_{\te^{-1}\om}(it)
\mu_\om(h_\om(it))\nu_\om(it)(f_\om)$
where $e(t)=\hat g(t)$ in the non-lattice case, while in the lattice case
$e(t)=r(t)$.  
Then by (\ref{UnifBound}) we have $|\psi^\om(t)|\leq C'\|f_\om\|_{\al,\xi}$, where $C'>0$ is some constant.
Moreover,
\[
\psi^\om(0)=e(0)\nu_\om(0)(f_\om)=\nu(0)(f)\int g(x)d\ka_h(x)
\]
and $\psi^\om(t)=\psi^\om(0)+t\psi_1^\om(t)$ for some bounded function $\psi_1^\om:[-\del_0,\del_0]\to\bbC$
so that $\bbE_P|\psi^\om(t)|\leq C''\bbE_P\|f_\om\|_{\al,\xi}$ for any $t\in[-\del_0,\del_0]$, where $C''$ is another constant.
We also set 
$\Del_\rho^\om(t)=f_\rho^\om(t)-f_\rho^{\te^{-1}\om}(t)$
and $\phi^\om(t)=e(t)\nu_\om(it)(f_\om)$.
Then by (\ref{Good Exp3}) the random variables $\Del_\rho^\om(t)$ are integrable and since
$P$ is $\te$-invariant we have $\bbE_P\Del_\rho^\om(t)=0$.
Now, for any $\rho\in(\frac12,1)$ write,
\[
2\pi V_\rho(g_a)=\int_{-\del_0}^{\del_0}e^{-ita}(F_\rho(t)+G_\rho(t))dt+J(a,\rho)\ka_h(g)\nu(0)(f)
\]
where
\begin{eqnarray*}
J(a,\rho)=\int_{-\del_0}^{\del_0}\frac{e^{-ita}}{1-\rho(1+i\gam t)}dt,\\
F_\rho(t)=\bbE_P\big[\frac{\psi^\om(t)}{1-\rho\la_{\te^{-1}\om}(it)}-\frac{\psi^\om(0)}{1-\rho(1+i\gam t)}\big]\\
\text{and }\,\,G_\rho(t)=\bbE_P\big[\frac{\phi^\om(t)\Del_\rho^\om(t)}{1-\rho\la_{\te^{-1}\om}(it)}\big]
\end{eqnarray*}
and we used (\ref{f rho repres}).
As in the proof of Lemma VII.3 in \cite{HenHerve} we have 
\[
\lim_{\rho\to1}J(a,\rho)=\gam^{-1}\big(\pi+\int_{-a\del_0}^{a\del_0 }\frac{\sin t}t dt\big).
\]
In order to compete the proof of the lemma in the case discussed above,
it is sufficient to show that $F_\rho(t)$ and $G_\rho(t)$ 
are bounded in both $t\in[-\del_0,\del_0]\setminus\{0\}$ and $\rho\in(-\frac12,1)$ and that the pointwise limits
 $\lim_{\rho\to1}F_\rho(t)$ and  $\lim_{\rho\to1}G_\rho(t)$ exist on $[-\del_0,\del_0]\setminus\{0\}$.
We first show that the above statement holds true for $F_\rho(\cdot)$. 
First, by (\ref{lam Tay}) and since $|\psi^\om(t)|\leq C'\|f_\om\|_{\al,\xi}$, 
applying the dominated convergence theorem yields
that for any $t\in[-\del_0,\del_0]\setminus\{0\}$,
\[
\lim_{\rho\to1}F_\rho(t)=\bbE_P\big[\frac{\psi^\om(t)}{1-\la_{\te^{-1}\om}(it)}
-i\frac{\psi^\om(0)}{\gam t}\big].
\]
Moreover, for any $t$ and $\rho$ in the above domain,
\begin{eqnarray*}
|F_\rho(t)|=\Big|t\bbE_P\big[\frac{\psi_1^\om(t)}{1-\rho\la_{\te^{-1}\om}(it)}\big]+
\rho\bbE_P\big[\frac{\la_{\te^{-1}\om}(it)-1-i\gam t}{\big(1-\rho\la_{\te^{-1}\om}(it)\big)\big(1-\rho(1+i\gam t)\big)}\psi^\om(0)\big]\Big|
\\\leq C_0\bbE_P\|f_\om\|_{\al,\xi}+C_0|e(0)\nu(0)(f)|t^{-2}\bbE_P|\la_{\te^{-1}\om}(it)-1-i\gam t|\\
\leq A_0(\bbE_P\|f_\om\|_{\al,\xi}+|e(0)\nu(0)(f)|)
\end{eqnarray*}
where we used (\ref{UnifBound}) and (\ref{lam Tay}) and
$C_0,A_0>0$ are some constants, and we obtain the desired estimate.

Next, in order to show that $G_\rho$ converges pointwise on $[-\del_0,\del_0]\setminus\{0\}$ as $\rho\to1$, we will need the following simple result. Let $X_1,X_2,...$ be
a sequence of random variables so that $\sum_{n\geq1}\bbE|X_n|<\infty$. Let $Y_\rho,\,\rho\in(\frac12,1]$ be a collection of random variables so that $\|Y_\rho-Y_1\|_{L^\infty}$ converges to $0$ as $\rho\to1$ and $Y_1$ is bounded. Then 
\[
\lim_{\rho\to1}\sum_{n\geq 1}\rho^{n-1}\bbE X_nY_\rho=\sum_{n\geq 1}\bbE X_nY_1. 
\]
Using (\ref{Good Exp2}) with $Y\equiv 1$  and then the above result with 
$X_n=\phi^\om(t)\cD_{\om,n}(t)$ and $Y_\rho=\frac1{1-\rho\la_{\te^{-1}\om}(it)}$ we derive that the limit $\lim_{\rho\to1}G_\rho(t)$ exists for any $t$ in the above domain. To complete the proof, we will show that $G_\rho(t)$ is bounded
when considered as a function of $(\rho,t)\in(\frac12,1)\times[-\del_0,\del_0]$. Indeed, let $(\rho,t)$ be
in the latter domain. In the following arguments all the constants will depend only on $\del_0$ (and not on $\om,\rho$
and $t$). Since $\bbE_P\Del_\rho^\om(t)=0$ and $\phi_\om(0)$ does
not depend on $\om$, for we can write
\begin{eqnarray*}
G_\rho(t)=\bbE_P\frac{\phi^\om(t)\Del_\rho^\om(t)}{1-\rho\la_{\te^{-1}\om}(it)}\\=
\bbE_P\big[\phi^\om(t)\Del_\rho^\om(t)\big((1-\rho\la_{\te^{-1}\om}(it))^{-1}-(1-\rho(1+i\gam t))^{-1}\big)\big]\\
+\bbE_P\big[\big(\phi^\om(t)\Del_\rho^\om(t)-\phi^\om(0)\Del_\rho^\om(t)\big)(1-\rho(1+i\gam t))^{-1}\big]:=I_1+I_2.
\end{eqnarray*}
By (\ref{Good Exp3}), (\ref{UnifBound}) and our assumption that $\be(1-\frac1p)>1$ we have 
\[
\bbE_P|\phi^\om(t)\Del_\rho^\om(t)|\leq C_1\bbE_P[\|f_\om\|_{\al,\xi}|\Del_\rho^\om(t)|] \leq C_2
\]
for some constants $C_1, C_2>0$. Moreover, by (\ref{UnifBound}) and (\ref{lam Tay}) 
there exist constants $C_3>0$ and $C_3'>0$
so that
\[
|(1-\rho\la_{\te^{-1}\om}(it))^{-1}-(1-\rho(1+i\gam t))^{-1}|\leq C_3't^{-2}|\la_{\te^{-1}\om}(it)-(1+i\gam t)|
\leq C_3.
\]
We conclude that $|I_1|\leq C$ for some constant $C$. 
Next, by (\ref{UnifBound}), there exists a constant $C_4>0$ so that $|\phi^\om(t)-\phi^\om(0)|
\leq C_4(1+\|f_\om\|_{\al,\xi})|t|$, and therefore, using also (\ref{Good Exp3}),  
\[
|I_2|=|1-\rho(1+i\gam t)|^{-1}
\big|\bbE_P\big[(\phi^\om(t)-\phi^\om(0))\Del_\rho^\om(t)\big]\big|\leq C_5
\]
where $C_5$ is another constant. The proof of the lemma is complete
now in the case when $\mu_\om(h'_\om(0))$ does not depend on $\om$.

\end{proof}

The completion of the proof of Theorem \ref{renewal-D.S.} is done similarly 
to \cite{HenHerve} (see references therein).
Let $g\in\mathscr H_1$ be dominated by a positive member of $\mathscr H_1$. If $g$ is positive then 
$U(g)=\lim_{\rho\to1}U_\rho(g)$ which is finite in view of Lemmas 
\ref{VII.3} and \ref{VII.4}. Thus $U$ is a Radon measure on $\bbR$
(see \cite{HenHerve} and references therein).
Next,  for any $a\in\bbR$ the integral $U(g_a)$ is defined and is the limit of 
$U_\rho(g_a)$ as $\rho\to1$. Therefore by Lemmas \ref{VII.3} and \ref{VII.4},
\begin{equation}\label{RHS}
U(g_a)-\frac{\nu(0)(f)\ka_h(g)}{\gam}\frac1{2\pi}\big(\pi+\int_{-a\del_0}^{a\del_0}\frac{\sin t}{t}dt\big)=\widehat{Q_1}(a)+\widehat{Q_2}(a)+
\frac1{2\pi}\widehat{R_3}(a)
\end{equation}
where $Q_i(t)=R_i(t)e(t),\,i=1,2$ and
we set $R_1(t)=R_3(t)=0$ for any $t\in(-\del_0,\del_0)$ and $R_2(t)=0$ for any 
$t\in\bbR\setminus[-\del_0,\del_0]$. The functions $Q_1$, $Q_2$ and $R_3$ are integrable,
and so, by the Riemann-Lebesgue Lemma,  the right hand side of (\ref{RHS}) converges to $0$
as $|a|\to\infty$. Finally, as in \cite{HenHerve}, 
\[
|\bbI(a\geq 0)-\frac1{2\pi}\big(\pi+\int_{-a\del_0}^{a\del_0}\frac{\sin t}{t}dt\big)|\leq 2|a|^{-1}
\]
for any nonzero $a\in\bbR$, and the proof of Theorem \ref{renewal-D.S.} is complete.






\section{Mixing conditions}\label{MixSec}\setcounter{equation}{0}


In this section we will show that conditions (\ref{V-mix}) and (\ref{Neighb mix}) 
in Assumption \ref{theta mixing assumption} hold true for shift spaces generated 
by certain mixing processes and for (natural invertible extensions of) certain dynamical 
systems. We will focus only the invertible case, and the conditions will hold true 
in the non-invertible case by considering functions of the $0$-the coordinate (in the natural extension)
and using Section \ref{Reduction}. We will always assume that Assumption \ref{bound ass+Tr Op meas+ass phi u} holds true.
We begin with
\begin{proposition}\label{Prop1}
Suppose  that there exists a measure
$\mu$ on $\cX$ and functions $r_\om:\cX\to\bbR$ so that $\boldsymbol{\mu}_\om(0)=r_\om \mu$ and 
all $r_\om$'s take values at some finite interval $(m,M)\subset(0,\infty)$. Moreover, assume that there exist
 constants $d_1>0$ and $\be>0$ so that for any sufficiently large $k$ and $n\geq1$,
\begin{equation}\label{Vk mix}
P\{\om:\,\sum_{j=0}^{n-1}\tilde V_k(\te^{jk}\om)\leq d_1kn\}\leq\frac{d_2(k)}{n^\be}
\end{equation}
where $\tilde V_k(\om)=\bbE_\mu (S_k^\om \bar u)^2$
and $d_2(k)$ is a constant which depends only on $k$.
Then (\ref{V-mix}) holds true. 
In particular, (\ref{V-mix}) holds true when polynomial
concentration inequalities of the form
\begin{equation}\label{Poly Vk}
P\{\om: |\sum_{n=0}^{n-1}\tilde V_k(\te^{jk}\om)-n\bbE_P\tilde V_k|\geq \ve n\}
\leq\frac{d(k,\ve)}{n^\be} 
\end{equation}
hold true, where $\ve>0$ and $d(k,\ve)$ is a constant which depends only $k$ and $\ve$.
\end{proposition}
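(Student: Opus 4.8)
The plan is in two parts: first deduce (\ref{V-mix}) from (\ref{Vk mix}) by a block decomposition, then deduce (\ref{Vk mix}) from the concentration inequalities (\ref{Poly Vk}). Throughout I write $q:=\lfloor n/k\rfloor$, let $Q_2$ be the constant of Lemma~\ref{Press der general}, and let $m<M$ be as in the statement. The first ingredient I would establish is the two‑sided comparison
\[
m\,\tilde V_k(\om)\ \le\ V_k(\om)\ \le\ M\,\tilde V_k(\om),\qquad P\text{-a.e. }\om,\ \forall\,k.
\]
Since $\boldsymbol{\mu}$ is $T$-invariant with disintegrations $\boldsymbol{\mu}_\om(0)=\nu_\om(0)$, one has $(T_\om^j)_*\boldsymbol{\mu}_\om(0)=\boldsymbol{\mu}_{\te^j\om}(0)$ $P$-a.s.\ (equivalently, this follows from $\cA_0^\om\textbf{1}=\textbf{1}$ and $(\cA_0^\om)^*\nu_{\te\om}(0)=\nu_\om(0)$), hence $\boldsymbol{\mu}_\om(0)(S_k^\om\bar u)=\sum_{j=0}^{k-1}\boldsymbol{\mu}_{\te^j\om}(0)(\bar u_{\te^j\om})=0$ and so $\boldsymbol{\mu}_\om(0)\big((S_k^\om\bar u)^2\big)=V_k(\om)$; as $d\mu=r_\om^{-1}d\boldsymbol{\mu}_\om(0)$ with $r_\om^{-1}\in(M^{-1},m^{-1})$, integrating the nonnegative function $(S_k^\om\bar u)^2$ yields the displayed bounds.

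Next I would prove the block lower bound. Fix $n,k$. From $\la_{\om,N}(z)=\prod_{j=0}^{N-1}\la_{\te^j\om}(z)$ one gets $\la_{\om,qk}(z)=\prod_{i=0}^{q-1}\la_{\te^{ik}\om,k}(z)$ and $\la_{\om,n}(z)=\la_{\om,qk}(z)\la_{\te^{qk}\om,\,n-qk}(z)$, so the analytic pressure functions supplied by Lemma~\ref{Lemma one} agree up to $2\pi i\bbZ$-valued constants; differentiating twice at $0$ gives $\Pi''_{\om,qk}(0)=\sum_{i=0}^{q-1}\Pi''_{\te^{ik}\om,k}(0)$ and $\Pi''_{\om,n}(0)=\Pi''_{\om,qk}(0)+\Pi''_{\te^{qk}\om,\,n-qk}(0)$ (last term absent when $n=qk$). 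Using $\nu_\om(0)=\boldsymbol{\mu}_\om(0)$ and Lemma~\ref{Press der general} (so that $V_\ell(\om)\ge\Pi''_{\om,\ell}(0)-Q_2$ and $\Pi''_{\om,\ell}(0)\ge V_\ell(\om)-Q_2\ge-Q_2$ for every $\ell$), I obtain
\[
V_n(\om)\ \ge\ \Pi''_{\om,n}(0)-Q_2\ \ge\ \sum_{i=0}^{q-1}\Pi''_{\te^{ik}\om,k}(0)-2Q_2\ \ge\ \sum_{i=0}^{q-1}V_k(\te^{ik}\om)-(q+2)Q_2,
\]
hence $V_n(\om)\ge m\sum_{i=0}^{q-1}\tilde V_k(\te^{ik}\om)-(q+2)Q_2$ by the comparison above.

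Then I would fix once and for all a $k$ so large that $k\ge2Q_2/(md_1)$ and that (\ref{Vk mix}) holds for this $k$, and apply (\ref{Vk mix}) with $q=\lfloor n/k\rfloor$ in the role of $n$: the set $\Gam_n:=\{\om:\sum_{i=0}^{q-1}\tilde V_k(\te^{ik}\om)>d_1kq\}$ satisfies $1-P(\Gam_n)\le d_2(k)q^{-\be}$, and on $\Gam_n$ the last display gives $V_n(\om)>md_1kq-(q+2)Q_2=q(md_1k-Q_2)-2Q_2\ge\tfrac12 md_1kq-2Q_2$. For $n\ge2k$ one has $q\ge n/(2k)$ and $q^{-\be}\le(2k)^\be n^{-\be}$, so $V_n(\om)>\tfrac14 md_1 n-2Q_2\ge\tfrac18 md_1 n$ once $n$ exceeds a threshold depending only on $k,m,d_1,Q_2$, the finitely many smaller $n$ being absorbed by enlarging the constant; this is (\ref{V-mix}) with $c_1=md_1/8$, the same $\be$, and a suitable $c_2$.

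Finally, to get (\ref{Vk mix}) from (\ref{Poly Vk}) I would argue that $\bbE_P\tilde V_k$ grows linearly in $k$. From $|\Pi_{\om,k}(z)|\le k(\ln2+\pi)$ on $U_1$ and the Cauchy estimates, $\Pi''_{\om,k}(0)=O(k)$ uniformly in $\om$, so $V_k(\om)\le C'k$ uniformly by Lemma~\ref{Press der general}; with the $P$-a.s.\ convergence $k^{-1}\mathrm{var}_{\boldsymbol{\mu}_\om(0)}S_k^\om u\to\sig^2$ and dominated convergence this gives $k^{-1}\bbE_P V_k\to\sig^2>0$, whence $\bbE_P\tilde V_k\ge M^{-1}\bbE_P V_k\ge\tfrac{\sig^2}{2M}k$ for all large $k$. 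Putting $d_1:=\sig^2/(4M)$, so that $\bbE_P\tilde V_k-d_1k\ge d_1k>0$, I would use
\[
\Big\{\sum_{j=0}^{n-1}\tilde V_k(\te^{jk}\om)\le d_1kn\Big\}\subset\Big\{\big|\sum_{j=0}^{n-1}\tilde V_k(\te^{jk}\om)-n\bbE_P\tilde V_k\big|\ge(\bbE_P\tilde V_k-d_1k)n\Big\}
\]
together with (\ref{Poly Vk}) applied with $\ve=\bbE_P\tilde V_k-d_1k$ to bound the left‑hand probability by $d\big(k,\bbE_P\tilde V_k-d_1k\big)n^{-\be}$, which is (\ref{Vk mix}) with $d_2(k):=d\big(k,\bbE_P\tilde V_k-d_1k\big)$. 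I do not expect a genuine obstacle here: the argument rests on the \emph{exact} block‑additivity $\Pi''_{\om,qk}(0)=\sum_i\Pi''_{\te^{ik}\om,k}(0)$ and the uniform bound $|\Pi''_{\om,\ell}(0)-V_\ell(\om)|\le Q_2$, which together make an explicit summation of the exponentially decaying covariances of the block sums unnecessary. The points that need some care are the comparison $m\tilde V_k\le V_k\le M\tilde V_k$ — which hinges on the $T$-equivariance of the disintegrations $\boldsymbol{\mu}_\om(0)$, so that the $\mu$-quantity $\tilde V_k$ is automatically centred at the $\boldsymbol{\mu}_\om(0)$-mean of $S_k^\om u$ — the use of $\sig^2>0$ (which is needed for (\ref{V-mix}) to hold at all), and the bookkeeping of the finitely many small $n$ and the ``all large $k$'' thresholds, which affect only the constants and not the exponent $\be$.
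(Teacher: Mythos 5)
Your argument is correct, and it reaches the conclusion by a route that is genuinely different from the one taken in the paper at the key step. The paper establishes the block lower bound $V_n(\om)\gtrsim\sum_{j<q}V_k(\te^{jk}\om)-A(\frac nk+1)$ by expanding $V_n$ as a double sum of covariances of block sums $S(\om,j)$ and controlling the off‑diagonal terms via the uniform (in $\om$) exponential decay of correlations from Lemma~5.10.4 of \cite{book} together with (\ref{Exp Conv final.0}). You instead exploit the \emph{exact} multiplicativity $\la_{\om,qk}(z)=\prod_{i<q}\la_{\te^{ik}\om,k}(z)$, so that the branches $\Pi_{\om,\cdot}$ of Lemma~\ref{Lemma one} satisfy $\Pi''_{\om,qk}(0)=\sum_{i<q}\Pi''_{\te^{ik}\om,k}(0)$ (the $2\pi i\bbZ$ ambiguity kills no derivative), and then use the $O(1)$ comparison $|\Pi''_{\om,\ell}(0)-V_\ell(\om)|\le Q_2$ from Lemma~\ref{Press der general}. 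This trades the covariance bookkeeping for an exact algebraic identity plus a uniform $O(1)$ error per block, giving a slightly tighter error ($(q+2)Q_2$ rather than $A(\frac nk+1)$) and, pleasantly, using only results restated in the present paper rather than an external lemma. Your treatment of the remaining steps — the two‑sided comparison $m\tilde V_k\le V_k\le M\tilde V_k$ via the $T$‑equivariance of $\nu_\om(0)=\boldsymbol{\mu}_\om(0)$ (which centres $S_k^\om\bar u$ automatically), the choice of $k\ge 2Q_2/(md_1)$, the transition from $q=\lfloor n/k\rfloor$ to $n$, and the derivation of (\ref{Vk mix}) from (\ref{Poly Vk}) via $k^{-1}\bbE_PV_k\to\sig^2$ (justified by dominated convergence with the Cauchy‑estimate bound $V_k\le C'k$) — matches what the paper does or fills in routine details the paper leaves implicit. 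The constant comparison in the paper ($\frac mM$ rather than your sharper $m$, respectively $M^{-1}$) is immaterial.
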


\begin{proof}
Let $1\leq k\leq n$, set 
\[
S(\om,j)=S_{jk}^{\te^{jk}\om}\bar u,\,\,j=0,1,...,[n/k]-1
\]
and 
$S(\om,[n/k])=S^{\te^{[n/k]}\om}_{n-k[n/k]}\bar u$, where $S_0^\om$ is defined to be $0$. 
Since $\|u_\om\|_{\al,\xi}$ is a bounded random variable and $\nu(0)=\boldsymbol{\mu}(0)$ 
is $T$-preserving, applying 
Lemma 5.10.4 in \cite{book} together with (\ref{Exp Conv final.0}), 
namely, the uniform in $\om$ exponential decay of correlations, yields
that 
\begin{eqnarray*}
V_n(\om)=\bbE_{\nu_\om(0)}\big(\sum_{j=0}^{[n/k]}S(\om,j)\big)^2
=\sum_{0\leq j_1,j_2\leq [n/k]}\text{cov}_{\nu_\om(0)}(S(\om,j_1), S(\om,j_2))\\
\geq \sum_{j=0}^{[\frac nk]-1}\bbE_{\nu_\om(0)}(S(\om,j))^2-A(\frac{n}{k}+1)=
\sum_{j=0}^{[\frac nk]-1}V_k(\te^{jk}\om)-A(\frac{n}{k}+1)
\end{eqnarray*}
where $A>0$ is some positive constant. It follows from the assumption about
the densities $r_\om$ that
\[
V_n(\om)\geq\frac{m}{M}\sum_{j=0}^{[\frac nk]-1}\tilde V_k(\te^{jk}\om)-A(\frac{n}{k}+1).
\]
Taking a sufficiently large $k$ we derive that on the complement of the set whose 
probability is estimated in (\ref{Vk mix}) we have
\[
V_n(\om)\geq \frac{Md_1n}{m}-A(\frac{n}{k}+1).
\]
If $k$ is sufficiently large then  the above right hand side is not less than $cn$ for some 
constant $c>0$, and so condition (\ref{V-mix}) is satisfied.
Finally, (\ref{Poly Vk}) implies (\ref{Vk mix}) since $\bbE_P\tilde V_k\geq \frac{m}{M}\bbE_P V_k$ and $\lim_{k\to\infty}k^{-1}\bbE_P V_k=\sig^2>0$ 
\end{proof}
Note that when $\boldsymbol{\mu}_\om=\nu_\om(0)$ does not depend on $\om$ then the first assumption in 
Proposition is satisfied with $\mu=\nu_\om(0)$ and $r_\om=1$. The measures $\boldsymbol{\mu}_\om$ do not depend on $\om$ when only the function $u_\om$ is random.
These measures do no depend on $\om$ also in the non-invertible case considered in Section \ref{Non Inv Sec}, since, in the extension, we have $\nu_\om(0)=\boldsymbol{\mu}_\om(0)=\ka$.
When $\boldsymbol{\mu}_\om$ depends on $\om$, then, in the circumstances of Proposition \ref{Prop2},
the first assumption is satisfied with $r_\om=\textbf{h}_\om(0)$ and $\mu=\textbf{m}$, since the function  
function $\textbf{h}_\om(0)$ is bounded from above and
below by positive constants not depending on $\om$ (see Section 5.12 in \cite{book}).

Henceforth, we assume that all the conditions of Proposition \ref{Prop1} are satisfied.
We provide now sufficient conditions for (\ref{Poly Vk}) and (\ref{Neighb mix}) to hold true
in two situations. First consider the case when $(\Om,\cF,P,\te)=(\hat{\Om}_0,\hat{\cF_0},\hat{P_0},\hat\vartheta)$ is the natural invertible extension
(described in \ref{Reduction}) of a measure preserving system $(\Om_0,\cF_0,P_0,\vartheta)$. Note that $\te$
has a periodic point if and only if $\vartheta$ has a periodic point.
We assume here that $\Om_0$ is equipped with a metric $d_0$ so that $\text{diam}\Om_0\leq1$ and $\cF_0$ contains the appropriate
Borel $\sig$-algebra.
 Let the metric $d$ on $\Om_0^\bbZ$ be defined by 
\[
d(a,b)=\sum_{n\in\bbZ}2^{-|n|}d_0(a_n,b_n)\,\,
\text{ for any }\,\,a=(a_n)_{n\in\bbZ}\,\text{ and }\, b=(b_n)_{n\in\bbZ}.
\] 
Let $\boldsymbol{\iota}:\Om\to\Om_0^\bbZ$ be the  inclusion map given by $\boldsymbol{\iota}\om=\om$.
We also need the following
\begin{assumption}\label{MixAssNew}
The function $u_\om$ and the transformation $T_\om$ have the form 
$u_\om=\boldsymbol{u}_{\boldsymbol{\iota}\om}$ and $T_\om=\boldsymbol{T}_{\boldsymbol{\iota}\om}$,
where $\boldsymbol{u}_{v}$ and $\boldsymbol{T}_v$ are H\"older continuous functions of
the variable $v\in(\Om_0^\bbZ,d)$ with respect to the norm $\|\cdot\|_{\al,\xi}$ and the 
metric $d(\boldsymbol{T}_{v_1},\boldsymbol{T}_{v_2})=\sup_{x\in\cX}
\rho(\boldsymbol{T}_{v_1}x,\boldsymbol{T}_{v_2}x)$, respectively.
\end{assumption}
This assumption includes the case when $u_\om$ and $T_\om$ depend only on the first coordinate $\zeta_0$ and 
are H\"older continuous functions of this coordinate when considered as functions on 
$(\Om_0,d_0)$ (as in Section \ref{Reduction}).
Under Assumption \ref{MixAssNew}, the functions $\tilde V_k,\,k\geq1$ have the form 
$\tilde V_k(\om)=\boldsymbol{\tilde V}_k(\boldsymbol{\iota}\om)$, where $\boldsymbol{\tilde V}_k$ is 
a H\"older continuous function on $(\Om_0^\bbZ,d)$. Therefore, for any $N>0$ there exists a H\"older continuous function $\boldsymbol{\tilde V}_{k,N}$, which depends only on the coordinates whose indexes lie in $\{-N,...,0,...N\}$, 
so that 
\begin{equation}\label{Approx}
\sup_v|\boldsymbol{\tilde V}_{k}(v)-\boldsymbol{\tilde V}_{k,N}(v)|\leq C_k2^{-N}
\end{equation}
 where $C_k$ is some 
constant which depends only on $k$.

 
Relying on (\ref{Approx}) and the results in \cite{Chaz} (see also \cite{Chaz2}) and \cite{Haf-MD},
in the above circumstances we have

\begin{proposition}\label{Prop3}
Let $\beta>0$. Then conditions (\ref{Vk mix}) and (\ref{Neighb mix}) hold true 
when $(\Om_0,\cF_0,P_0,\vartheta)$ is 
either a topologically mixing subshift of finte type, a Young tower with at least one periodic point,
with tails of order $n^{-\beta-1}$
or when $\vartheta^n(\om)=\xi_n(\om)$, where $\xi_0,\xi_1,\xi_2,...$ is the stationary vector valued processes satisfying the mixing and approximation conditions from \cite{Haf-MD}, assuming that $\vartheta$ has at least one 
periodic point, $\Om_0$ is a metric space and $P_0(A)>0$ for all open sets $A$.
\end{proposition}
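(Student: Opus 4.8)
The plan is to prove the deviation estimate (\ref{Poly Vk}) together with (\ref{Neighb mix}); the former yields (\ref{Vk mix}) by the last step of the proof of Proposition \ref{Prop1} (for large $k$ one has $\bbE_P\tilde V_k\geq d_1k$ with $d_1$ as there, and (\ref{Poly Vk}) applied with $\ve=\frac12\bbE_P\tilde V_k$ gives (\ref{Vk mix})). Both (\ref{Poly Vk}) and (\ref{Neighb mix}) are polynomial deviation estimates, of order $n^{-\be}$, for forward Birkhoff sums over $(\Om,\cF,P,\te)$ of a single \emph{bounded} observable, and the strategy is to reduce each of them to such an estimate for a \emph{H\"older} observable of finitely many coordinates on the base system $(\Om_0,\vartheta,P_0)$, and then to invoke the concentration/large-deviation inequalities available in the three cases.

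For (\ref{Poly Vk}) the observable $\tilde V_k$ is bounded (as $\|u_\om\|_{\al,\xi}$ is bounded, by Assumption \ref{bound ass+Tr Op meas+ass phi u}) and, by the paragraph preceding the statement, H\"older on $(\Om_0^\bbZ,d)$, and (\ref{Approx}) replaces it, up to a uniform error $C_k2^{-N}$, by a function of the coordinates in $\{-N,\dots,N\}$; choosing $N=N(k,\ve)$ large enough, this error is $\leq\frac\ve2$, hence contributes at most $\frac\ve2 n$ to any sum of $n$ terms. For (\ref{Neighb mix}) I would first observe that the set $B:=B_{\om_0,n_0,s}=\bigcap_{i=0}^{sn_0-1}\te^{-i}B_{\te^i\om_0}$ is open and $P(B)>0$: it is a nonempty open set (it contains the orbit of $\om_0$), and in each of the three cases $P$ has full support, so every nonempty open set has positive mass --- classical for subshifts of finite type and Young towers with a periodic point (see \cite{Bow}, \cite{Young2}, \cite{Chaz}), and immediate from $P_0(A)>0$ for open $A$ (via the definition of the natural extension) in the last case. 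Pick a bounded H\"older $\chi:\Om\to[0,1]$ with $\chi\leq\bbI_B$ and $p_0:=\bbE_P\chi>0$; since $\sum_{j=0}^{n-1}\bbI_B(\te^j\om)\geq\sum_{j=0}^{n-1}\chi(\te^j\om)$, it suffices to bound $P\{\om:\sum_{j=0}^{n-1}\chi(\te^j\om)\leq\frac12p_0n\}$, which gives (\ref{Neighb mix}) with $c=\frac12p_0$ (depending on the chosen neighbourhoods, as it should); and, as above, $\chi$ is approximated by functions of finitely many coordinates up to uniformly small error.

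Now, for a bounded H\"older function $\psi$ on $\Om$ depending on the coordinates in $\{-N,\dots,N\}$ and any fixed $\ell\geq1$, the forward Birkhoff sum $\sum_{j=0}^{n-1}\psi(\te^{\ell j}\om)$ equals, up to a boundary term bounded in terms of $N$ and $\ell$ only, a forward Birkhoff sum over $\vartheta^\ell$ on $(\Om_0,P_0)$ of a H\"older observable $G_N$: for indices $m\geq N$ the coordinates $\zeta_m=\vartheta^m\zeta_0$ are determined by $\zeta_0$, so $\psi\circ\te^m$ is a H\"older function of $\zeta_0$ alone, and $\bbE_{P_0}G_N=\bbE_P\psi$. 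One is thus reduced, in each case, to the polynomial deviation estimate $P\{|\sum_{j=0}^{n-1}G_N(\vartheta^{\ell j}\cdot)-n\bbE_{P_0}G_N|\geq\ve n\}=O(n^{-\be})$ for a bounded H\"older observable on $(\Om_0,P_0)$, with $\ell=k$ for (\ref{Poly Vk}) and $\ell=1$ for (\ref{Neighb mix}). For fixed $\ell$ the system $(\Om_0,\vartheta^\ell,P_0)$ is again a topologically mixing subshift of finite type, a Young tower with a periodic point and return-time tails of the same order $n^{-\be-1}$, or a process satisfying the hypotheses of \cite{Haf-MD}, and the required estimate is exactly what is provided in each of these: exponential deviations coming from the transfer-operator spectral gap in the subshift case (so (\ref{Poly Vk}) and (\ref{Neighb mix}) hold for every $\be>0$), the polynomial concentration inequalities of \cite{Chaz} (see also \cite{Chaz2}) in the Young tower case, and the deviation inequality of \cite{Haf-MD} under its mixing and approximation conditions.

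I expect the main obstacle to be quantitative rather than structural: one must extract a deviation bound of order exactly $n^{-\be}$ from return-time tails of order $n^{-\be-1}$, keeping the constant dependent only on $k$ (for (\ref{Poly Vk})) and only on $s$, $\om_0$, $n_0$ and the chosen neighbourhoods (for (\ref{Neighb mix})); and one must check that, for fixed $k$, the $k$-th iterate of a Young tower admits a Young tower structure with return-time tails of the same polynomial order. Both are standard-but-technical points feeding into the final invocation of \cite{Chaz} and \cite{Haf-MD}; everything else is the soft reduction outlined above.
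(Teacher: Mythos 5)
Your proposal is correct and follows essentially the same route as the paper. The paper's own proof is a single paragraph: it asserts that by \cite{Chaz} (resp. \cite{Chaz2}) and \cite{Haf-MD} all three base systems satisfy (\ref{Poly Vk}), and that for any Lipschitz $f:\Om_0^\bbZ\to\bbR$ one has $P\{|\sum_{j<n}f\circ\te^j-n\bbE_Pf|\geq\ve n\}\leq d(\ve)n^{-\be}$, from which (\ref{Neighb mix}) follows by approximating $\bbI_B$ from below by a Lipschitz function equal to $1$ on a concentric half-radius ball. Your write-up makes explicit two points the paper leaves implicit: (a) that $P(B_{\om_0,n_0,s})>0$ (which you justify via full support of $P$ / positivity of $P_0$ on open sets; this is needed to get a nontrivial $c$), and (b) that (\ref{Poly Vk}) involves sampling along $\te^{jk}$ rather than $\te^j$, so one must either appeal to a deviation inequality for $\vartheta^k$ or deduce the subsampled bound from the $\ell=1$ one — the paper simply asserts (\ref{Poly Vk}) holds without comment. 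You also explicitly use (\ref{Approx}) to pass to an observable of finitely many coordinates before invoking \cite{Haf-MD}, which is exactly what the ``approximation conditions'' of that reference require. These are legitimate fillings-in of the paper's terse argument, not a different method; the slight overhead of checking that a power $\vartheta^k$ of a Young tower has return-time tails of the same polynomial order is indeed a standard technical point and does not represent a gap in the overall strategy.
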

This proposition holds true since (by either \cite{Chaz} or \cite{Haf-MD}) all the maps mentioned there satisfy (\ref{Poly Vk}) and 
that for any Lipschitz function $f:\Om_0^\bbZ\to\bbR$ there exist constants 
$d(\ve),\ve>0$ so that for any $n\geq 1$,
\[
P\{\om:|\sum_{n=0}^{n-1}f(\te^{j}\om)-n\bbE_Pf|\geq \ve n\}
\leq\frac{d(\ve)}{n^\be}.
\] 
This inequality implies (\ref{Neighb mix})
since the indicator function of any ball $B(v,r)\subset\Om_0^\bbZ$
can be approximated from below by a Lipschitz function $f$ which takes the constant value $1$
on $B(v,\frac12 r)$ (see, for instance, Section 1.2.9 in \cite{book}).
 
Note that when $\vartheta$ is a Young tower the requirement of having a peridoic point is just 
the assumption that the function $\vartheta^R|\Gam:\Gam\to\Gam$ has at least one periodic point, which is not too restrictive. Here $\Gam$ is the (hyperbolic) base set and $R$ is the corresponding return time function.

Next, let $\zeta=\{\zeta_n,\,n\geq 0\}$ be a stationary sequence of random variables defined on 
a probability space $(\Om_0,\cF_0,P_0)$ taking values on 
some  metric space $(\cY,d_0)$ so that $\text{diam}\cY\leq1$. Let $(\Om,\cF,P,\te)$ be
the natural associated shift, namely, $\Om=\cY^\bbZ$, $\cF$ is the product $\sig$-algebra, $P$ is 
given by 
\[
P\big\{(y_j)_{j\in\bbZ}: y_i\in A_i;\,|i|\leq N\big\}=P_0(\zeta_i\in A_{i-N};\,0\leq i\leq 2N)
\]
and $\te$ is the two sided shift. 
Let $d$ be the metric on $\Om$ given by 
$d(\{a_n\},\{b_n\})=\sum_{n\in\bbZ}2^{-|n|}d_0(a_n,b_n)$. We assume here
that $\zeta$ is stretched exponentially 
$\al$-mixing, namely that there exist constants $a,b,c>0$ so that 
for any $n\geq1$, $k\geq1$, $A\in\sig\{\zeta_1,...,\zeta_k\}$ 
and $B\in\sig\{\zeta_{i}:\, i\geq k+n\}$,
\begin{equation}\label{Mix}
\big|P_0(A\cap B)-P_0(A)P_0(B)|\leq ae^{-bn^c}. 
\end{equation}
We first have

\begin{proposition}\label{Prop4}
(i) Suppose that the functions $\om\to u_\om$ and $\om\to T_\om$ are H\"older continuous
with respect to the norm $\|\cdot\|_{\al,\xi}$ and the metric 
$d(T_{\om},T_{\om'})=\sup_{x\in\cX}\rho(T_{\om}x,T_{\om'}(x))$, respectively. Then the functions 
$\tilde V_k$ are H\"older continuous and condition (\ref{Poly Vk}) holds 
true with any $\beta>0$.

(ii) Let $\cJ\subset\bbZ$ be a finite 
set and let $\pi_{\cJ}:\Om\to\cY^{J}$ be the projection corresponding to the coordinates indexed by members
of $\cJ$. Then condition (\ref{Poly Vk}) also holds true (with any $\beta>0$) when 
$u_\om=u_{\pi_{\cJ}\om}$ 
and $T_\om=T_{\pi_{\cJ}\om}$ depend only on the coordinates in places indexed by the members of $J$, without assuming that $\tilde V_k$'s are continuous.
\end{proposition}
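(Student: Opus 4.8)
The plan is to deduce both parts from one probabilistic estimate, which I will call the auxiliary deviation bound: for every bounded $W:\Om\to\bbR$ that is measurable with respect to the coordinates $\zeta_i$, $i\in\cJ_0$, for some fixed finite $\cJ_0\subset\bbZ$, and for every $\beta>0$, $\ve>0$ and $k\geq1$, there is a constant $C=C(W,\cJ_0,k,\ve,\beta)$ with
\begin{equation}\label{AuxDev}
P\Big\{\om:\Big|\sum_{j=0}^{n-1}\big(W(\te^{jk}\om)-\bbE_PW\big)\Big|\geq\ve n\Big\}\leq \frac{C}{n^\beta},\qquad n\geq1.
\end{equation}
Granting (\ref{AuxDev}), part (ii) is immediate: when $u_\om=u_{\pi_\cJ\om}$ and $T_\om=T_{\pi_\cJ\om}$, then $u_{\te^i\om}$ and $T_{\te^i\om}$ depend only on the coordinates in $i+\cJ$, so $T_\om^i$ depends only on those in $\bigcup_{l=0}^{i-1}(l+\cJ)$, and hence $S_k^\om\bar u=\sum_{i=0}^{k-1}\bar u_{\te^i\om}\circ T_\om^i$ and $\tilde V_k(\om)=\bbE_\mu(S_k^\om\bar u)^2$ depend only on the coordinates in the fixed finite set $\cJ_k:=\bigcup_{l=0}^{k-1}(l+\cJ)$ (here $\bar u_\om$ depends on the same coordinates as $u_\om$); moreover $\tilde V_k$ is bounded because $\|u_\om\|_{\al,\xi}$ is bounded by Assumption \ref{bound ass+Tr Op meas+ass phi u}(i). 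Applying (\ref{AuxDev}) with $W=\tilde V_k$ and $\cJ_0=\cJ_k$, and using $\bbE_P[\tilde V_k\circ\te^{jk}]=\bbE_P\tilde V_k$ (as $\te$ preserves $P$), is exactly (\ref{Poly Vk}).

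For part (i) I would first record that $\tilde V_k$ is bounded and H\"older continuous on $(\Om,d)$; this is the assertion already made before Proposition \ref{Prop3}, whose justification applies here verbatim, because $\om\mapsto S_k^\om\bar u$ is a H\"older map into $\cH^{\al,\xi}$ with $\|\cdot\|_{\al,\xi}$-norms bounded uniformly in $\om$ (since $\om\mapsto u_\om$ and $\om\mapsto T_\om$ are H\"older for $\|\cdot\|_{\al,\xi}$ and for the sup-metric $d(T_\om,T_{\om'})=\sup_x\rho(T_\om x,T_{\om'}x)$) and $g\mapsto\int g^2\,d\mu$ is locally Lipschitz on bounded subsets of $\cH^{\al,\xi}$. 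Since the metric $d$ assigns weight $2^{-|n|}$ to the coordinate $\zeta_n$ and $\mathrm{diam}\,\cY\leq1$, freezing all but finitely many coordinates at a reference point produces, for each $N$, a function $\tilde V_{k,N}$ depending only on the coordinates in $\{-N,\dots,N\}$ with $\|\tilde V_k-\tilde V_{k,N}\|_\infty\to0$ geometrically in $N$, exactly as in (\ref{Approx}). Given $\ve>0$, fix $N=N(k,\ve)$ with $\|\tilde V_k-\tilde V_{k,N}\|_\infty\leq\ve/4$; the triangle inequality then shows that the event in (\ref{Poly Vk}) for $\tilde V_k$ with tolerance $\ve$ is contained in the event for $\tilde V_{k,N}$ with tolerance $\ve/2$, so (\ref{AuxDev}) applied to $W=\tilde V_{k,N}$ gives (\ref{Poly Vk}) with a constant depending only on $k$, $\ve$ and $\beta$.

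What remains, and what I expect to be the only substantial step, is (\ref{AuxDev}). Put $Y_j:=W(\te^{jk}\om)-\bbE_PW$; this is a strictly stationary, centered, bounded sequence, and $Y_j$ is measurable with respect to $\sigma\{\zeta_i:i\in jk+\cJ_0\}$. If $\cJ_0\subseteq[-M,M]$, then for $j<j'$ the index sets $jk+\cJ_0$ and $j'k+\cJ_0$ are separated by at least $(j'-j)k-2M$, so (\ref{Mix}) forces the $\alpha$-mixing coefficients of $(Y_j)_{j\geq0}$ to satisfy $\alpha_Y(m)\leq a\exp\!\big(-b(mk-2M)^c\big)$ once $mk>2M$, while $\alpha_Y(m)\leq1$ for the finitely many smaller lags; in particular $\sum_{m\geq1}m^s\alpha_Y(m)<\infty$ for every $s>0$, with a sum depending only on $k$ and $\cJ_0$. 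For such a sequence one has, for every integer $q\geq1$, the Rosenthal-type moment bound $\bbE\big|\sum_{j=0}^{n-1}Y_j\big|^{2q}\leq C_q\,n^{q}$, obtained by the standard covariance-summation and block-decomposition argument for bounded $\alpha$-mixing sequences (see \cite{Haf-MD} and the references therein, and the analogous estimates used in \cite{Chaz}); Markov's inequality then gives $P\{|\sum_{j=0}^{n-1}Y_j|\geq\ve n\}\leq C_q\,\ve^{-2q}\,n^{-q}$, and taking $q\geq\beta$ proves (\ref{AuxDev}). The only bookkeeping needing attention is that the constants depend on $k$ (through the finitely many initial ``bad'' mixing lags, and through $C_k$ and $N(k,\ve)$ in part (i)), which is harmless since (\ref{Poly Vk}) permits $d(k,\ve)$ to depend on $k$ and $\ve$.
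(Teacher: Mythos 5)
Your proposal is correct, and it is a legitimate filling-in of what the paper leaves as a one-line citation. The paper's ``proof'' of Proposition \ref{Prop4} is literally the sentence ``Proposition \ref{Prop4} follows from the so-called method of cumulants'' followed by references; the substance is the same auxiliary deviation bound you isolate as (\ref{AuxDev}). Where you differ is in how (\ref{AuxDev}) is established: the paper points to cumulant calculus (Saulis--Statulevicius / Doukhan--Neumann style), under which the $s$-th cumulant of $\sum_j Y_j$ is $O(n)$ and one even gets sub-exponential concentration under the stretched-exponential mixing (\ref{Mix}); you instead go through a Rosenthal-type bound $\bbE|\sum_{j<n} Y_j|^{2q}\leq C_q n^q$ for bounded stationary $\al$-mixing sequences with all polynomial mixing moments finite, followed by Markov. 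Both are standard, and for the statement at hand — only $n^{-\be}$ for arbitrary but fixed $\be$ — the moment/Markov route is lighter and exactly sufficient, while the cumulant route over-delivers. Your bookkeeping is sound: you correctly observe that after sampling every $k$-th shift, $Y_j=W(\te^{jk}\om)-\bbE_PW$ is a bounded stationary centered sequence whose $\al$-mixing coefficients are $\leq ae^{-b(mk-2M)^c}$ past finitely many lags; the boundedness of $\tilde V_k$ follows from Assumption \ref{bound ass+Tr Op meas+ass phi u}(i); part (ii) is immediate because $S_k^\om\bar u$ (hence $\tilde V_k$) is a deterministic function of the coordinates in $\bigcup_{l<k}(l+\cJ)$; and your reduction of part (i) to the finitely-dependent case via the uniform approximation $\tilde V_{k,N}$ (a triangle-inequality containment of events, with tolerance halved) is the same device the paper sets up in (\ref{Approx}) just before Proposition \ref{Prop3}. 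The only caveat, which you flag, is that constants may depend on $k$ and $\ve$ (through $N(k,\ve)$, the initial ``bad'' lags $m\leq 2M/k$, and the choice $q\geq\be$), and this is explicitly allowed by the form of $d(k,\ve)$ in (\ref{Poly Vk}).
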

Proposition \ref{Prop4} follows from the so-called method of cumulants, 
see, for instance, \cite{SaulStat}, \cite{KalNe}, \cite{Douk}, \cite{Ded} or
\cite{Haf-MD} (in the case $\ell=1$).
 The mixing condition (\ref{Mix})
holds true, for instance, when $\zeta_n=\zeta_0\circ\vartheta^n$ and  $\zeta_0$ is measurable with respect to a Markov 
partition corresponding to either a Young tower with stretched exponentially tails (see \cite{Hyd} or 
\cite{Hyd1} for verification of (\ref{Mix})) or a topologically mixing subshift of finite type (see \cite{Bow}), 
or has the form $\zeta_n=f(\Upsilon_n)$ when $\Upsilon_n,\,n\geq1$ is a geometrically 
ergodic Markov chain or a Markov chain satisfying the Doeblin condition 
(see \cite{BHK}). Note that the cases discussed in Proposition \ref{Prop4} include the case when
$T_\om^n=T_{\zeta_n}\circ T_{\zeta_{n-1}}\circ\cdots\circ T_{\zeta_1}$, namely the case when the compositions of the maps $T_\om$ are taken along stationary and sufficiently fast mixing process.

In the above circumstances existence of a periodic points is trivial 
(see the last paragraph of Section \ref{sec1}), and the question is whether (\ref{Neighb mix})
holds true. 

\begin{proposition}\label{Prop5}
Condition (\ref{Neighb mix}) holds true with $\om_0=(...a,a,a,...),\,a=(a_i)\in\cY^{n_0}$ when 
\[
P_0(\zeta_{i+(j-1)n_0}\in A_{i,j};\,\,i=0,1,...,n_0-1,\,1\leq j\leq s)>0
\] 
for any open sets $A_{i,j}$ so that  $a_i\in A_{i,j}$ for all $i$ and $j$.
\end{proposition}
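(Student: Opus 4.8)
The plan is to bound $\bbI_{B_{\om_0,n_0,s}}$ from below by the indicator of a cylinder set of positive $P$-measure and then to derive a polynomial concentration estimate for the Birkhoff sums of that cylinder indicator from the mixing assumption \eqref{Mix}.

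First I would reduce to a cylinder. Given the neighborhoods $B_{\te^{j}\om_0}$, $0\le j<n_0$, fix $r>0$ so small that the open ball $B(\te^{j}\om_0,r)\subset B_{\te^{j}\om_0}$ for every $j$ (balls in the metric $d$ on $\Om$). Since $\text{diam}\,\cY\le 1$ and $d(a,b)=\sum_{k\in\bbZ}2^{-|k|}d_0(a_k,b_k)$, there are $N\in\bbN$ and $\del>0$ with
\[
\big\{w\in\Om:\ d_0(w_k,v_k)<\del\ \text{for all }|k|\le N\big\}\subset B(v,r)\quad\text{for every }v\in\Om .
\]
Using that $\te^{i}\om_0=(a_{(m+i)\bmod n_0})_{m\in\bbZ}$ is periodic of period $n_0$, we get $\te^{-i}B(\te^{i}\om_0,r)\supset\{\om:\ d_0(\om_{k+i},a_{(k+i)\bmod n_0})<\del,\ |k|\le N\}$, and intersecting over $i=0,1,\dots,sn_0-1$ yields
\[
B_{\om_0,n_0,s}\ \supset\ C:=\big\{\om\in\Om:\ d_0(\om_{k},a_{k\bmod n_0})<\del\ \text{for }-N\le k\le sn_0-1+N\big\},
\]
a cylinder depending on the coordinates $-N,\dots,sn_0-1+N$ only, so $\bbI_{B_{\om_0,n_0,s}}\ge\bbI_{C}$.

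Next I would show $p:=P(C)>0$. By stationarity, $P(C)$ equals the $P_0$-probability that $\zeta_m$ lies in the open $d_0$-ball of radius $\del$ around $a_{(m+\ell)\bmod n_0}$ for $m=0,\dots,L$, where $L=sn_0-1+2N$ and $\ell=(-N)\bmod n_0\in\{0,\dots,n_0-1\}$. The sequence $(a_{(i+\ell)\bmod n_0})_{0\le i<n_0}$ is a cyclic rotation of $a$ (the periodic pattern of $\te^{\ell}\om_0$), and the positivity hypothesis stated for $a$ passes to any cyclic rotation: given open $O_m\ni a_{(m+\ell)\bmod n_0}$ for $0\le m\le s'n_0-1$, set $\hat O_m:=O_{m-\ell}$ for $\ell\le m\le \ell+s'n_0-1$ and $\hat O_m:=\cY$ otherwise; then $(\hat O_m)_{0\le m\le (s'+1)n_0-1}$ is of the form required in the hypothesis with parameter $s'+1$, so has positive probability, and shifting by $-\ell$ and discarding the extra coordinates gives $P_0(\zeta_m\in O_m;\,0\le m\le s'n_0-1)>0$. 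Applying this with $s'$ large enough that $s'n_0-1\ge L$ (and taking $O_m=\cY$ for $m>L$) and then relaxing the constraints on the indices $>L$ gives $P(C)>0$.

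Finally, since $\bbI_C$ is bounded and depends on finitely many coordinates, the stationary sequence $(\bbI_C\circ\te^{j})_{j\ge0}$ is stretched exponentially $\al$-mixing, its mixing coefficients being controlled by those of $\zeta$ up to the fixed coordinate window (the finitely many initial terms that reach negative indices change the Birkhoff sum by at most an additive constant). Hence, exactly as in the proof of Proposition \ref{Prop4} (the method of cumulants; see \cite{SaulStat}, \cite{KalNe}, \cite{Douk}, \cite{Ded}, \cite{Haf-MD}), for any $\be>0$ there is a constant $d>0$ with
\[
P\Big\{\om:\ \big|\textstyle\sum_{j=0}^{n-1}\bbI_C(\te^{j}\om)-np\big|\ge\tfrac p2\,n\Big\}\le\frac{d}{n^{\be}},\qquad n\ge1 .
\]
On the complement of this event $\sum_{j=0}^{n-1}\bbI_{B_{\om_0,n_0,s}}(\te^{j}\om)\ge\sum_{j=0}^{n-1}\bbI_C(\te^{j}\om)\ge\tfrac p2\,n$, which is \eqref{Neighb mix} with $c=p/2$ and $d$ as above. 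The main obstacle is the middle step: translating $C$ into the exact form of the positivity hypothesis, which forces one to enlarge $s$ (the window has length $sn_0+2N$ rather than $sn_0$) and to use that the hypothesis is invariant under rotating the periodic pattern $a$; the reduction to a cylinder and the concentration bound are routine given the earlier results and the references behind Proposition \ref{Prop4}.
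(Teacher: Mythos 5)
Your proof is correct and follows essentially the same line as the paper's (quite terse) justification: reduce $B_{\om_0,n_0,s}$ to a finite-coordinate open cylinder of positive measure and then apply the cumulant-method concentration bound to Birkhoff sums of its indicator. One small simplification is available: by enlarging $N$ to a multiple of $n_0$ the cyclic-rotation step becomes unnecessary, since the shifted periodic pattern then coincides with $a$ and the positivity hypothesis applies directly with parameter $s+2N/n_0$.
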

Proposition \ref{Prop5} holds true since in its circumstances (by the method of cumulants), for any open set $B$ which depends only finite number of coordinates 
and $\beta>0$, there exist positive constants $c_\be(\ve),\,\ve>0$ so that for any $n\geq 1$,  
\begin{equation}
P\{\om: |\sum_{j=0}^{n-1}\bbI_{B}(\te^j\om)-nP(B)|\geq \ve n\}
\leq \frac{c_\be(\ve)}{n^\be}.
\end{equation}
In particular, we can consider  Markov chains with finite number of states
and more generality
Markov chains with positive densities $p(x,y)$ around $(a_i,a_{i+1}),i=1,...,n_0-1$,
whose stationary measure assigns positive mass to open sets. The proposition also holds true
when $\zeta_0$ is measurable
with respect to an appropriate Markov partition since then the non-empty intersection
$\cap_{0\leq i<n_0}\cap_{1\leq j\leq s}\vartheta^{-(j-1)n_0+i}A_{i,j}$ has positive $P_0$-measure.

\section{Additional results}

\subsection{Non-continuous functions}\label{Non-Iden}
We explain here how to obtain all the results stated in Section \ref{sec2} when $\phi_\om$ and $u_\om$ are H\"older continuous only
on some pieces of $\cX$. First, under Assumption \ref{Ass pair prop},\,$P$-a.s. for any $n\geq1$ and 
$x,x'\in\cX$ with $\rho(x,x')<\xi$ 
we can write 
\begin{equation}\label{Pair1}
(T_\om^n)^{-1}\{x\}=\{y_1,....,y_k\}\,\,\text{ and }
\,\,(T_\om^n)^{-1}\{x'\}=\{y_1',...,y_k'\}
\end{equation}
where 
\[
k=k_{\om,x,n}=|(T_\om^n)^{-1}\{x\}|\leq D_{\om,n}:=\prod_{i=0}^{n-1}D_{\te^i\om},
\]
$|\Gam|$ denotes the cardinality of a finite set $\Gam$
and with  $\gam_{\om,i}=\prod_{s=0}^{i-1}\gam_{\te^s\om}$,
\begin{equation}\label{Pair2}
\rho\big(T_\om^jy_i,T_\om^jy_i'\big)
\leq (\gam_{\te^j\om,n-j})^{-1}\rho(x,x')
\end{equation}
for any $1\leq i\leq k$ and $0\leq j<n$.

Let $H_\om$ be a random variable and let $\psi_\om\in\cH^{\al,\xi}$ be so that 
$v_{\al,\xi}(\psi_\om)\leq H_\om$.
Then by Lemma 5.1.4 in \cite{book}, for any $n\geq 1$, 
$x,x'\in\cX$ with $\rho(x,x')<\xi$ and $1\leq i\leq k$, 
\begin{equation}\label{S n distortion}
|S_n^\om\psi(y_i)-S_n^\om\psi(y_i')|
\leq\rho^\al(x,x')\sum_{j=0}^{n-1}H_{\te^j\om}
(\gam_{\te^j\om,n-j})^{-\al}
\end{equation}
where $y_1,...,y_k$ and 
$y_1',...,y_k'$ satisfy (\ref{Pair1}) and (\ref{Pair2}).
Not only members of $\cH_{\al,\xi}$ satisfy (\ref{S n distortion}).
For instance, when $\cX$ is is a $C^2$-compact Riemanian manifold 
and there exist a finite collection of disjoint rectangles $\{I_j\}$ so that $T_\om|{I_{j}}:I_j\to\mathbb{S}^1$ is an expanding diffeomorphism for each $j$,
then (\ref{S n distortion}) also holds true with some $H_\om$  for functions $\psi_\om$ which are only H\"older
continuous when restricted to each one of the $I_j$'s. Perhaps the most interesting case is when 
$\psi_\om=-\ln\big(\frac{d(T_\om)_*\textbf{m}}{d\textbf{m}}\big)$ where $\textbf{m}$ is the 
normalized volume measure. This includes, of course, the case when
$\cX=\mathbb{S}^1$, $\textbf{m}$ is the Lebesgue measure and $I_j$'s are disjoint arcs 
(namely, the classical case of random distance expanding maps on the unit interval).

 We have the following
\begin{theorem}
Suppose that all the conditions of Theorems \ref{CLT}, \ref{LLT--D.S.} and \ref{renewal-D.S.}
hold true, except for the ones concerning $\|\phi_\om\|_{\al,\xi}$ and $\|u_\om\|_{\al,\xi}$.
Assume that $\phi_\om$ and $u_\om$ satisfy (\ref{S n distortion}) with some bounded random variable $H_\om$. 
Then all the results stated in  Theorems \ref{CLT}, \ref{LLT--D.S.} and \ref{renewal-D.S.} hold true.
\end{theorem}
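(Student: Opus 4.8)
The plan is to show that every ingredient used in the proofs of Theorems \ref{CLT}, \ref{LLT--D.S.} and \ref{renewal-D.S.} survives when the norm control $\|\phi_\om\|_{\al,\xi},\|u_\om\|_{\al,\xi}<\infty$ is weakened to the Lasota--Yorke distortion estimate (\ref{S n distortion}) with a bounded $H_\om$. Concretely, I would first re-examine Assumption \ref{bound ass+Tr Op meas+ass phi u}: the only place $\|\phi_\om\|_{\al,\xi}$ and $\|u_\om\|_{\al,\xi}$ entered was through the random Lasota--Yorke inequality (\ref{L.Y.-general}) and the resulting bounds $\cL_z^{\om,n}(\cH^{\al,\xi})\subset\cH^{\al,\xi}$, $\|\cL_{it}^{\om,n}\|_{\al,\xi}\le B(1+|t|)$, and the uniform bound $\|\cL_0^{\om,n}\textbf{1}\|_\infty<\infty$. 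Using (\ref{S n distortion}) together with the geometric growth $\gam_{\te^j\om,n-j}\ge(1+\delta)^{n-j}$ coming from Assumption \ref{bound ass+Tr Op meas+ass phi u}(i) (which we still keep for $n_\om,D_\om,\gam_\om$), one estimates $v_{\al,\xi}(\cL_z^{\om,n}g)$ directly: the contribution from distortion of $S_n^\om\phi$ and $S_n^\om u$ along the inverse branches is $\le\rho^\al(x,x')\sum_j H_{\te^j\om}(\gam_{\te^j\om,n-j})^{-\al}$, a convergent geometric sum bounded uniformly in $\om$ and $n$, so the same Lasota--Yorke inequality (\ref{L.Y.-general}) holds with $\|\phi_\om\|_{\al,\xi},\|u_\om\|_{\al,\xi}$ replaced by (a constant times) the bound on $H_\om$. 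Likewise $\|\cL_0^{\om,n}\textbf{1}\|_\infty=\sum_{y}e^{S_n^\om\phi(y)}$ remains bounded because, after pairing one reference inverse branch, (\ref{S n distortion}) controls the oscillation of $S_n^\om\phi$ over each fiber of $(T_\om^n)^{-1}\{x\}$ uniformly, while $\boldsymbol{\mu}_\om(0)$-normalization fixes the overall mass. This is exactly the content for which Lemma 5.1.4 of \cite{book} was designed, so the random complex RPF theorem (Theorem \ref{RPF rand T.O. general}) and all the estimates of Section \ref{Additional} — in particular (\ref{UnifBound}), (\ref{Norm comparison}), (\ref{lam basic bound}), (\ref{Exp Conv final.0}) — go through verbatim with $\|u_\om\|_\infty$ replaced by the (still bounded) quantity $\|S_1^\om u\|_\infty\le\|u_\om\|_\infty$, which is finite since $u_\om$ is globally defined and the oscillation is controlled on balls; if even $\|u_\om\|_\infty$ is not assumed bounded one replaces it by its bound and keeps track of it as a constant.

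Once the RPF machinery and the estimates of Section \ref{Additional} are in place, I would go through the three proofs and note that each of them uses \emph{only} these consequences. The proof of Theorem \ref{CLT} uses (\ref{lam basic bound}), (\ref{phi est}), Lemmas \ref{Press der general} and \ref{Remainder Corollary 1}, and $V_n(\om)/n\to\sig^2$ $P$-a.s.; the latter is a statement about the invariant measure and the cocycle $S_n^\om u$ which does not require $u_\om\in\cH^{\al,\xi}$ but only integrability, so it persists. The proof of the LLT reduces, via Theorem \ref{CLT} and the argument of Theorem 2.3 of \cite{book}, to verifying the abstract Assumptions 2.2.1--2.2.2 of \cite{book}, which in turn follow from the decay of $\bbE|e^{itS_n}|$ established through $\Gam_n$ (small $t$, via (\ref{First Gamma props1})--(\ref{First Gamma props2})) and through $\Del_n$ (large $t$, Lemma \ref{Del n Lemmma}); Lemma \ref{Del n Lemmma} itself only uses (\ref{L.Y.-general}), (\ref{Norm comparison}), the continuity Assumption \ref{PerPointAss}, and the spectral input (\ref{R norms apprx1}) for $R_{it}=\cL_{it}^{\om_0,n_0}$ — all of which are now available. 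Here I should double-check one subtle point: (\ref{R norms apprx1}) (the classical spectral/Doeblin--Fortet argument of \cite{HenHerve}) needs a Lasota--Yorke inequality for the single operator $R_{it}$ acting on $\cH^{\al,\xi}$, and this is precisely the $n_0$-step version of (\ref{L.Y.-general}) at the periodic point, which we just re-established; the non-lattice/lattice dichotomy is about aperiodicity of $S_{n_0}^{\om_0}u$ under $\tau=T_{\om_0}^{n_0}$ and is unaffected.

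For the renewal theorem I would observe that the proof in Section \ref{Renewal Section} invokes, besides everything above, the expansions of $\la_{\om,n}(it)$, $h_{\te^n\om}(it)$, $\nu_\om(it)$ near $0$ from (\ref{UnifBound}), the estimate (\ref{Exp Conv final.0}), the derivative identities $\la_\om'(0)=\gam$, $\boldsymbol{\mu}_\om(0)(h_\om'(0))=0$, and the H\"older integrability $\|f_\om\|_{\al,\xi}\in L^p$; none of these is sensitive to whether $\phi_\om,u_\om$ lie in $\cH^{\al,\xi}$ or only satisfy (\ref{S n distortion}). Therefore the conclusion is that one re-proves the handful of structural estimates in Section \ref{Additional} with $\|\phi_\om\|_{\al,\xi},\|u_\om\|_{\al,\xi}$ replaced by a bound for $H_\om$ (plus the trivial bounds $\|\phi_\om\|_\infty,\|u_\om\|_\infty$), and then quotes Theorems \ref{CLT}, \ref{LLT--D.S.}, \ref{renewal-D.S.} whose proofs are black boxes fed only by those estimates. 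The main obstacle — and the only place that requires genuine care rather than bookkeeping — is re-deriving the Lasota--Yorke inequality and the uniform bound on $\|\cL_0^{\om,n}\textbf{1}\|_\infty$ from (\ref{S n distortion}) alone, i.e.\ verifying that Lemma 5.1.4 of \cite{book} indeed yields (\ref{L.Y.-general}) in this generality; once that is done the rest is a routine citation audit. I would also remark that the mixing Propositions \ref{Prop1}--\ref{Prop5}, which supply Assumption \ref{theta mixing assumption}, involve $\tilde V_k(\om)=\bbE_\mu(S_k^\om\bar u)^2$ and the continuity of $\om\mapsto T_\om,u_\om$ rather than membership in $\cH^{\al,\xi}$, so they too remain applicable, and hence the hypotheses of the three theorems can still be met in this broader setting.
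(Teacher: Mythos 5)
Your proposal is correct and takes essentially the same approach as the paper, whose proof is a one-line remark that the argument of Section~\ref{sec4} applies verbatim because every result imported from \cite{book} (the Lasota--Yorke inequality, the random RPF theorem, and the estimates of Section~\ref{Additional}) is proved there under the distortion hypothesis~(\ref{S n distortion}) with bounded $H_\om$ rather than under $\|\phi_\om\|_{\al,\xi},\|u_\om\|_{\al,\xi}<\infty$; you trace through exactly this dependency chain. The only blemish is the slightly circular remark about $\|S_1^\om u\|_\infty\le\|u_\om\|_\infty$ (these are equal), but in the intended setting ($\phi_\om,u_\om$ H\"older on pieces of $\cX$) the sup norms remain bounded, which is what is actually used.
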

This theorem is proved exactly as in Section \ref{sec4}, since
all the results from \cite{book}  that we applied hold true when 
(\ref{S n distortion}) holds with bounded $H_\om$'s.
In this case, the continuity condition in Assumption \ref{PerPointAss} will
be satisfied when the maps
$\om\to \phi_\om,u_\om$ are continuous with respect to the supremum norm 
and the differences $S_n\phi_{\om_1}-S_n\phi_{\om_2}$ and 
$S_n u_{\om_1}-S_n u_{\om_2}$ satisfy (\ref{S n distortion}) with some constant $H=H(\om_1,\om_2)$
so that $\lim_{\om_1,\om_2\to\te^{k}\om_0} H(\om_1,\om_2)=0,\,k=0,1,...,n_0-1$. 
When $T_\om$ is locally constant around the points in the orbit of the periodic point
$\om_0$ then,
in the examples discussed after 
(\ref{S n distortion}),  this condition means that  restrictions of $\phi_{\om}$
and $u_{\om}$ to each one of the $I_j$'s is a continuous 
function of $\om$ at $\om=\te^j\om_0,\,0\leq j<n_0$ with respect to the H\"older norm $\|\cdot\|_{\al,\xi}$.

\subsection{Non-identical fibers}\label{NonFib}
Let $\cE\subset\cF\times\cB$ be a measurable set
such that the fibers $\cE_\om=\{x\in \cX:\,(\om,x)\in\cE\},\,\om\in\Om$ are compact. The latter yields (see \cite{CV} Chapter III) that the mapping $\om\to\cE_\om$ is measurable with respect to the Borel $\sig$-algebra induced by the Hausdorff topology on the space $\cK(\cX)$ of compact subspaces of $\cX$ and the distance function $d(x,\cE_\om)$ is measurable in $\om$ for each $x\in \cX$.  
Furthermore, the projection map $\pi_\Om(\om,x)=\om$ is
measurable and it maps any $\cF\times\cB$-measurable set to a
$\cF$-measurable set (see ``measurable projection" Theorem III.23 in \cite{CV}).
Denote by $\cP$ the restriction of $\cF\times\cB$ on $\cE$.
Let 
\[
\{T_\om: \cE_\om\to \cE_{\te\om},\, \om\in\Om\}
\]
be a collection of continuous bijective maps between the metric spaces 
$\cE_\om$ and $\cE_{\te\om}$ so that
the map $(\om,x)\to T_\om x$ is measurable with respect to $\cP$ and each $T_\om$
is topologically exact and has the pairing property (namely, appropriate versions of 
Assumptions \ref{TopExRand} and \ref{Ass pair prop} hold true, see Chapter 5 of \cite{book} for the precise
formulations).

According to Lemma 4.11 in \cite{MSU} (applied with $r=\xi$), 
there exists an integer valued random variable $L_\om\geq1$ and $\cF$-measurable 
functions $\om\to x_{\om,i}\in\cX,\,i=1,2,3,...$ so that 
$x_{\om,i}\in\cE_\om$ for each $i$ and
\begin{eqnarray}\label{cover}
\bigcup_{k=1}^{L_\om} B_\om(x_{\om,k},\xi)=\cE_\om,
\,\,\,P\text{-a.s.}
\end{eqnarray}
Suppose that $L_\om$ is bounded. Then the proof of Theorem \ref{CLT} proceeds exactly 
as in Section \ref{sec4} since Theorem \ref{RPF rand T.O. general} and all the other 
results stated in Section \ref{sec3} hold true.

The role of the condition  that $\cE_\om$ does not depend on $\om$ in the proofs of Theorems \ref{LLT--D.S.}
and \ref{renewal-D.S.}
is only to insure that the operators $\cL_{it}^\om$ are defined on the same space when $\om$ lies
in some neighborhood of one of the members of the (periodic) orbit of $\om_0$.
The proofs of Theorems \ref{LLT--D.S.} and \ref{renewal-D.S.} proceed similarly when there 
exist open neighborhoods $U_{j}$ of $\om_j:=\te^j\om_0,\,j=0,1,...,n_0-1$ so that $\cE_\om=\cE_{\om_j}$
for any $\om\in U_j$, namely, when $\cE$ is a product set only in neighborhoods of points belonging to the 
periodic orbit of $\om_0$. In fact, the proof is carried out similarly
when for any $j$ and $\om\in U_j$ there exists a bilipschitz  homomorphism $\varphi_{\om,\om_j}:\cE_{\om_j}\to\cE_\om$, whose Lipschitz constant is bounded in $\om$, and for any compact set $J$ and $j=0,1,...,n_0-1$,
\[
\lim_{\om\to\om_j}\,\sup_{t\in J}\,\sup_{g\in\cH_\om^{\al,\xi}: \|g\|_{\al,\xi}=1}\,
\big\|\cL_{it}^\om g_\om-\big(\cL_{it}^{\om_j}(g\circ \varphi_{\om,\om_j})\big)\circ 
\varphi_{\om_{j+1},\te\om}\big\|_{\al,\xi}=0.
\] 


\subsection{Markov chains with transfer (transition) operators}
Suppose that $(\Om,\cF,P,\te)$ is invertible.
Let $\mu_\om$ be a (measurable in $\om$) probability measure on $\cE_\om$ and let $\xi_n^{\te^{-n}\om},\,n\geq0$
be a Markov chain with initial distribution $\mu_\om$ whose $n$-th step operator is given by 
$\cA_{0}^{\te^{-n}\om,n}$. Set 
\[
S_n^\om=\sum_{j=0}^{n-1}u_{\te^{-j}\om}(\xi_j^{\te^{-j}\om}).
\]
Let $S_n$ be the random variable generated by drawing $\om$ according to $P$ and taking on the fibers
the distribution of $S_n^\om$, namely the random variable whose characteristic function is given by
\[
\bbE e^{itS_n}=\int\mu_\om(\cA_{it}^{\te^{-n}\om,n}\textbf{1})dP(\om).
\]
Then the appropriate versions of Theorems \ref{CLT} and \ref{LLT--D.S.} are proved for the sequence of random variables
$S_n,\,n\geq 1$ exactly as in 
Section \ref{sec4}. As for the renewal theorem, the arguments in Section \ref{Renewal Section}
yield the following
 
\begin{theorem}\label{renewa2}
Suppose that Assumptions \ref{bound ass+Tr Op meas+ass phi u}, \ref{PerPointAss} and \ref{theta mixing assumption} hold true,
where in the last assumption we require that $\beta>1$.
Moreover, assume that
 $\boldsymbol{\mu}_\om(0)(u_\om)=\gam>0$ does not depend on $\om$.
Let $f_\om\in\cH_{\al,\xi}^\om$ be a positive function so that $\boldsymbol{\mu}_\om(0)(f_\om)=\boldsymbol{\mu}(0)(f)$ does not
depend on $\om$ and that $\|f_\om\|_{\al,\xi}\in L^p(\Om,\cF,P)$ for some 
$1<p\leq\infty$ so that $\be(1-\frac1p)>1$. For any Borel measurable set $B\subset\bbR$
set
\begin{eqnarray*}
&U(B)=U_{\mu,f}(B)=
\sum_{n\geq1}\bbE[f(S_n-S_{n-1})\bbI_B(S_n)]\\
&=\sum_{n\geq1}\int\mu_{\te^{-n}\om}
\big(f_{\te^{-n}\om}(\xi_n^{\te^{-n}\om})\bbI_B(S_n^\om)\big)dP(\om)
\end{eqnarray*}  
where $\bbI_B$ is the indicator function of the set $B$.
Then in both lattice and non-lattice
cases $U$ is a Radon measure on $\bbR$ so that $\int |g|dU<\infty$ for any
$C_{4\downarrow}(\bbR)$. Moreover if either $\mu_\om(h_\om'(0))$ or $\nu_\om'(0)(f_\om)$ do not depend on $\om$
then (\ref{renewal equations}) holds true for any function $g\in C_{4\downarrow}(\bbR)$.
\end{theorem}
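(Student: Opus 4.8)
The plan is to replay the proof of Theorem~\ref{renewal-D.S.} from Section~\ref{Renewal Section} almost verbatim, with the initial distributions $\mu_\om$ of the Markov chain $\xi_n^{\te^{-n}\om}$ taking over the role of the disintegrations $\boldsymbol{\mu}_\om(0)=\nu_\om(0)$ used there. As in Section~\ref{Renewal Section}, by Lemma~IV.5 in \cite{HenHerve} it suffices to prove (\ref{renewal equations}) for $g\in\mathscr H_1$ dominated by a positive member of $\mathscr H_1$, so that the Fourier inversion formula applies; starting from the defining identity $\bbE e^{itS_n}=\int\mu_{\te^n\om}(\cA_{it}^{\om,n}\textbf{1})\,dP(\om)$ for the characteristic function of $S_n$ (here, unlike in Section~\ref{Renewal Section}, no distributional identity of the form (\ref{DistEq}) need be invoked, since the chain is prescribed directly) one defines $U_\rho(B)=\sum_{n\geq1}\rho^{n-1}\int\mu_{\te^n\om}(f_\om(\xi_n^\om)\bbI_B(S_n^{\te^n\om}))\,dP(\om)$ and $V_\rho(g)$ as there, with $\mu_\om$ in place of $\boldsymbol{\mu}_\om(0)$; each $U_\rho$ is a finite measure since $\bbE_P\|f_\om\|_\infty\leq\bbE_P\|f_\om\|_{\al,\xi}<\infty$. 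Granted the analogues of Lemmas~\ref{VII.3} and \ref{VII.4} below, the assertions that $U$ is a Radon measure with $\int|g|\,dU<\infty$ for $g\in C_{4\downarrow}(\bbR)$ and that (\ref{renewal equations}) holds follow from the Riemann--Lebesgue lemma together with $|\bbI(a\geq0)-\frac1{2\pi}(\pi+\int_{-a\del_0}^{a\del_0}\frac{\sin t}t\,dt)|\leq2|a|^{-1}$, exactly as at the end of Section~\ref{Renewal Section}.

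The analogue of Lemma~\ref{VII.3} needs no change, since its proof uses only estimates on the transfer operators --- the exponential decay (\ref{Exp Conv final.0}) near $z=0$, the uniform bounds (\ref{UnifBound}) and (\ref{lam basic bound}), the integrability $\|f_\om\|_{\al,\xi}\in L^p$ with $\be(1-\frac1p)>1$, and the large-$t$ norm estimate of Lemma~\ref{Del n Lemmma} together with (\ref{DEL PROB}) --- while the measures $\mu_\om$ enter $L_\rho(t)=\sum_{n\geq1}\rho^{n-1}\bbE_P[\mu_{\te^n\om}(\cA_{it}^{\om,n}f_\om)]$ only as the outer functional and satisfy $\|\mu_\om\|=1$. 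One thus gets $\lim_{\rho\to1}(U_\rho(g)-V_\rho(g))=\int_{|t|>\del_0}e(t)R_1(t)\,dt+\int_{-\del_0}^{\del_0}e(t)R_2(t)\,dt$ with $R_1,R_2$ integrable, $e(t)=\hat g(t)$ (non-lattice) or $r(t)$ (lattice). In the analogue of Lemma~\ref{VII.4} the $F_\rho$-part also goes through for any $\mu_\om$: with $\psi^\om(t)=e(t)\la_{\te^{-1}\om}(it)\mu_\om(h_\om(it))\nu_\om(it)(f_\om)=\psi^\om(0)+t\psi_1^\om(t)$ one has $\psi^\om(0)=e(0)\boldsymbol{\mu}(0)(f)$ ($\om$-independent, since $\mu_\om(\textbf{1})=1$, $\la_\om(0)=1$, $h_\om(0)\equiv\textbf{1}$ and $\nu_\om(0)(f_\om)=\boldsymbol{\mu}(0)(f)$) and $\|\psi_1^\om\|\leq C\|f_\om\|_{\al,\xi}$ by (\ref{UnifBound}), which is all the $F_\rho$ estimate requires.

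The substance of the theorem is therefore the treatment of $G_\rho(t)=\bbE_P[\phi^\om(t)\Del_\rho^\om(t)/(1-\rho\la_{\te^{-1}\om}(it))]$, $\phi^\om(t)=e(t)\nu_\om(it)(f_\om)$, $\Del_\rho^\om(t)=\sum_{n\geq1}\rho^{n-1}\cD_{\om,n}(t)$, and here the hypothesis that $\mu_\om(h_\om'(0))$ or $\nu_\om'(0)(f_\om)$ is $\om$-independent enters. In Lemma~\ref{VII.4} the crucial bound is $\|\cD_{\om,n}(t)\|_q\leq c\,t^2\|\la_{\om,n}(it)\|_q$, whose $t^2$ (needed to absorb $\sum_{n\geq1}e^{-d_2nt^2}\asymp t^{-2}$) comes from the first $t$-derivative at $0$ of the bracket $\mu_{\te^n\om}(h_{\te^n\om}(it))\la_{\te^{n-1}\om}(it)-\mu_{\te^{n-1}\om}(h_{\te^{n-1}\om}(it))\la_{\te^{-1}\om}(it)$, namely $i(\mu_{\te^n\om}(h_{\te^n\om}'(0))-\mu_{\te^{n-1}\om}(h_{\te^{n-1}\om}'(0)))$, vanishing. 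When $\om\mapsto\mu_\om(h_\om'(0))$ is constant this derivative is $0$ for every $n$, so $\cD_{\om,n}(t)=O(t^2)$ and $G_\rho$ is bounded, the proof of Lemma~\ref{VII.4} applying word for word and giving $\lim_{\rho\to1}2\pi V_\rho(g_a)$ in the stated form with $\boldsymbol{\mu}(0)(f)$ for $\nu(0)(f)$. When instead only $\om\mapsto\nu_\om'(0)(f_\om)$ is constant, $\cD_{\om,n}(t)$ is merely $O(t)$ and $G_\rho$ acquires a simple-pole-type singularity at $t=0$; the plan is to split $\nu_\om(it)(f_\om)=c(t)+r^\om(t)$ with $c(t)=\boldsymbol{\mu}(0)(f)+it\,\nu_\om'(0)(f_\om)$ $\om$-independent (this is exactly the present hypothesis together with constancy of $\nu_\om(0)(f_\om)$) and $|r^\om(t)|\leq C\|f_\om\|_{\al,\xi}t^2$: the $r^\om$-part of $G_\rho$ is bounded and integrable (the $t^2$ in $r^\om$, after division by $|1-\rho\la_{\te^{-1}\om}(it)|\geq a_0|t|$ via (\ref{lam Tay}), offsets the surviving $t^{-1}$ in $\|\Del_\rho^\om(t)\|_q$), while for the $c(t)$-part one keeps the telescoping identity $\Del_\rho^\om(t)=f_\rho^\om(t)(1-\rho\la_{\te^{-1}\om}(it))-\la_{\te^{-1}\om}(it)\mu_\om(h_\om(it))$ (i.e.\ (\ref{f rho repres})) and, using that the outer factor $e(t)c(t)$ no longer depends on $\om$, isolates from $G_\rho$ the $\om$-independent simple pole $t\mapsto c'(\rho)/t$, whose contribution $\frac1{2\pi}\int_{-\del_0}^{\del_0}e^{-ita}\frac{c'(\rho)}{t}\,dt$ is an explicit Dirichlet-type integral $\propto\int_0^{\del_0 a}\frac{\sin u}u\,du$ handled exactly like $J(a,\rho)$, the remainder being bounded and integrable; alternatively one may reduce this case to the previous one by passing to the adjoint (time-reversed) formulation, where $\mu_\om(h_\om(\cdot))$ and $\nu_\om(\cdot)(f_\om)$ exchange roles.

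I expect the last case --- the $\nu_\om'(0)(f_\om)$-constant one --- to be the main obstacle. The delicate points are that one cannot bound $\sum_{n\geq1}\rho^{n-1}\|\cD_{\om,n}(t)\|_q$ below $t^{-1}$ term by term, so the telescoping cancellation (\ref{f rho repres}) must be exploited before taking absolute values; and that one must then verify that the singularity of $G_\rho$ at $t=0$ is exactly a simple pole with $\om$-independent residue admitting a limit as $\rho\to1$, and that the remainder is genuinely bounded and integrable --- here (\ref{First Gamma props1})--(\ref{First Gamma props2}), (\ref{lam Tay}) and $\be(1-\frac1p)>1$ do the work. Once $F_\rho$ is bounded and $G_\rho$ has the stated pole-plus-bounded form with limits as $\rho\to1$ on $[-\del_0,\del_0]\setminus\{0\}$, the evaluation $\lim_{\rho\to1}J(a,\rho)=\gam^{-1}(\pi+\int_{-a\del_0}^{a\del_0}\frac{\sin t}t\,dt)$ and the assembly of (\ref{renewal equations}), with $\boldsymbol{\mu}(0)(f)$ and $\gam$ in their stated roles, proceed verbatim as in Section~\ref{Renewal Section}.
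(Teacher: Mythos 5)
Your overall strategy is correct, and your treatment of the analogue of Lemma~\ref{VII.3}, of $F_\rho$, and of the case in which $\om\mapsto\mu_\om(h_\om'(0))$ is constant matches the paper exactly: in that case the first $t$-derivative of the bracket in $\cD_{\om,n}(t)$ vanishes, so $\|\cD_{\om,n}(t)\|_q\leq ct^2\|\la_{\om,n}(it)\|_q$ and the argument is word-for-word Lemma~\ref{VII.4}. (The paper also records that $\nu_\om(0)(h_\om'(0))=0$, so $\mu_\om=\nu_\om(0)$ always falls in this case.)

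Your \emph{primary} route for the case in which only $\om\mapsto\nu_\om'(0)(f_\om)$ is constant, however, does not close. Splitting $\nu_\om(it)(f_\om)=c(t)+r^\om(t)$ is fine and the $r^\om$-part of $G_\rho$ is indeed bounded by the $t^2/(|t|\cdot t)$ count you give. But substituting (\ref{f rho repres}) back into $\Del_\rho^\om(t)/(1-\rho\la_{\te^{-1}\om}(it))$ merely reproduces $f_\rho^\om(t)$ minus the $F_\rho$-type term --- it undoes the decomposition you started from rather than isolating a fresh, $\om$-independent pole. Centering instead by $\bbE_P\Del_\rho^\om(t)=0$ replaces $(1-\rho\la_{\te^{-1}\om}(it))^{-1}$ by a bounded difference, but $\bbE_P|\Del_\rho^\om(t)|$ is still only $O(t^{-1})$ here, leaving an $O(t^{-1})$ singularity whose putative residue is neither visibly $\om$-free nor computed; if it does not vanish, the Dirichlet-type contribution you describe would alter the constant $\frac{\boldsymbol{\mu}(0)(f)}{\gam}\ka_h(g)$ in (\ref{renewal equations}). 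As written this is a sketch with an unverified core claim.

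The one-sentence alternative you mention --- passing to the time-reversed formulation in which $\mu_\om(h_\om(\cdot))$ and $\nu_\om(\cdot)(f_\om)$ exchange roles --- is what the paper actually does, and it closes cleanly. By $\te$-invariance of $P$, termwise in $n$,
\[
\bbE_P\big[\nu_\om(it)(f_\om)\,f_\rho^\om(t)\big]
=\bbE_P\big[\mu_\om(h_\om(it))\,f_{1,\rho}^\om(t)\big],\qquad
f_{1,\rho}^\om(t)=\sum_{n\geq1}\rho^{n-1}\la_{\te^{-n}\om,n}(it)\,\nu_{\te^{-n}\om}(it)(f_{\te^{-n}\om}),
\]
and one checks $f_{1,\rho}^{\te\om}(t)=\la_\om(it)\nu_\om(it)(f_\om)+\rho\la_\om(it)f_{1,\rho}^\om(t)$, so the analogue of (\ref{f rho repres}) holds with $\la_\om(it)$ replacing $\la_{\te^{-1}\om}(it)$. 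The new $\cD_{\om,n}(t)$ then factors as $\la_{\te^{-n+1}\om,n-1}(it)$ times a bracket whose value and first $t$-derivative at $0$ both vanish precisely because $\nu_\om'(0)(f_\om)$ is constant (and $\nu_\om(0)(f_\om)=\boldsymbol{\mu}(0)(f)$ is constant); this yields the needed $O(t^2)$ bound, and the rest of Lemma~\ref{VII.4} runs with $\tilde\Gam_n=\te^{-n}\Gam_n$ and the obvious substitutions. You should promote this from a parenthetical remark to the actual proof of that case.
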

In the proof of Lemma 7.2.1 in \cite{book} we showed that $\nu_\om(0)(h_\om'(0))=0$ and so we can always take $\mu_\om=\nu_\om(0)=\boldsymbol{\mu}_\om(0)$. Since $\nu_\om(z)\textbf{1}=0$ we can always take $f_\om\equiv\textbf{1}$ and then choose any $\mu_\om$  for this $f$. 
In the case when $\nu_\om'(0)(f_\om)$ does 
not depend on $\om$ the arguments in Section \ref{Renewal Section} are modified as follows. 
We first write 
\[
\bbE_P[\nu_\om(it)(f_\om)f_\rho^{\om}(t)]=\bbE_P[\mu_\om(h_\om(it))
f_{1,\rho}^\om(t)]
\]
where 
\[
f_{1,\rho}^\om(t)=\sum_{n\geq1}\rho^{n-1}\la_{\te^{-n}\om}(it)\nu_{\te^{-n}\om}(it)
(f_{\te^{-n}\om}).
\]
Therefore, for any function $g_1$ with the properties described at the beginning of
Section \ref{Renewal Section},
\[
V_\rho(g_1)=\int_{-\del_0}^{\del_0}e_1(t)\bbE_P[\mu_\om(h_\om(it))f_{1,\rho}^\om(t)]dt
\]
where in the non-lattice case $e_1(t)=\hat g_1(t)$, while in the lattice 
case $e_1(t)=\sum_{k}\hat g_1(t+\frac{2\pi k}{h})$.
Set $\tilde \Gam_n=\te^{-n}\Gam_n$. Repeating the arguments of the proof of Lemma \ref{VII.4} with 
$\la_{\om}(it)$, $f_{1,\rho}^{\om}(t)-f_{1,\rho}^{\te\om}(t)$,
$\la_{\te^{-n}\om,n}(it)$, $\tilde \Gam_n$ and 
$\nu_{\te^{-n}\om}(it)(f_{\te^{-n}\om})$ in place of
$\la_{\te^{-1}\om}(it)$, $f_\rho^\om(t)-f_{\rho}^{\te^{-1}\om}(t)$, 
$\la_{\om,n}(it)$, $\Gam_n$ and $\mu_{\te^n\om}(h_{\te^n\om}(it))$, respectively, 
we obtain (\ref{renewal equations}) in the case when $\nu_\om'(0)(f_\om)$ does not depend on $\om$.

\subsection{Markov chains with transition densities}\label{Densities}
Let $(\Om,\cF,P,\te)$ and $(\cX,\rho)$,   
$\cE\subset\Om\times \cX$ and  $\cE_\om$ satisfy the conditions specified in Section \ref{NonFib}.
For any $\om\in\Om$ denote by $B_\om$ the Banach space of all bounded
Borel functions $g:\cE_\om\to\bbC$ together with the supremum norm $\|\cdot\|_\infty$.
For any $g:\cE\to\bbC$ consider the functions $g_\om:\cE_\om\to\bbC$ given by 
$g_\om(x)=g(\om,x)$.
Then by Lemma 5.1.3 in \cite{book}, the norm $\om\to\|g_\om\|_\infty$ is a $\cF$-measurable
function of $\om$, for any measurable $g:\cE\to\bbC$.

Let $r_\om=r_\om(x,y):\cE_\om\times\cE_{\te\om}\to[0,\infty),\,\om\in\Om$ be a family of 
integrable in $y$ Borel measurable functions, $m_\om,\,\om\in\Om$ be a family of Borel probability measures
on $\cE_\om$ and $u:\cE\to\bbR$ be a measurable function so that $u_\om\in B_\om,\,$ $P$-a.s.   
and that the random variable $\sup|u_\om|=\|u_\om\|_\infty$ is bounded.
Consider the family of random operators $R_z^\om,\,z\in\bbC$ which map (bounded) 
Borel functions $g$ on $\cE_{\te\om}$ to Borel measurable functions on $\cE_\om$ 
by the formula
\begin{equation}\label{IntOp}
R^\om_zg(x)=\int_{\cE_{\te\om}} r_\om(x,y)e^{zu_{\te\om}(y)}g(y)dm_{\te\om}(y).
\end{equation} 
We will assume that 
$R_0^\om$ are Markov operators, namely that $R_0^\om \textbf{1}=\textbf{1}$
where $\textbf{1}$ is the function which takes the constant value $1$ on $\cE_{\te\om}$.
Observe that 
\[
\|R_0^\om\|_\infty:=
\sup_{g\in B_{\te\om}:\|g\|_\infty\leq 1}\|R_0^\om g\|_\infty=\|R_0^\om\textbf{1}\|_\infty
\]
and therefore for $P$-a.a. $\om$ 
we have $\|R_z^\om\|_\infty<\infty$ for any 
$z\in\bbC$, namely,  $R_z^\om$ is a continuous linear operator between the Banach spaces 
$B_{\te\om}$ and $B_\om$.

\begin{assumption}\label{Meas ass}
The maps $\om\to\int_{\cE_{\om}} g_{\om}(x)dm_{\te\om}(x)$ and 
$(\om,x)\to R_0^\om g_{\te\om}(x)$, $(\om,x)\in \cE$ are measurable for any bounded
measurable function $g:\cE\to\bbC$. 
\end{assumption}

For any $\om\in\Om$, $n\in\bbN$ and $z\in\bbC$ consider the
$n$-th order iterates $R_z^{\om,n}:B_{\te^n\om}\to B_\om$
given by 
\begin{equation}\label{Int Op oter 1}
R_z^{\om,n}=R_z^\om\circ R_z^{\te\om}\circ\cdots\circ R_z^{\te^{n-1}\om}.
\end{equation}
Then we can write
\[
R_0^{\om,n}g(x)=\int _{\cE_{\te^n\om}}r_\om(n,x,y)g(y)dm_{\te^n\om}(y)
\]
for some family $r_\om(n,\cdot,\cdot)=r_\om(n,x,y):\cE_\om\times\cE_{\te^n\om}\to[0,\infty)$
of integrable in $y$ Borel
measurable functions.
We will assume that the following random version of the two sided Doeblin condition holds true. 
\begin{assumption}\label{Doeb}
There exist a bounded random variable $j_\om\in\bbN$ 
and $\al_m(\om)\geq 1$, $m\in\bbN$ such that $P$-a.s.,
\begin{equation}\label{al j om}
\al_m(\om)\leq r_\om(m,x,y)\leq\big(\al_m(\om)\big)^{-1},
\end{equation}
for any $m\geq j_\om\,$, $x\in\cE_\om$ and $y\in\cE_{\te^m\om}$.
Moreover, let $j_0$ be so that $j_\om\leq j_0$, $P$-a.s. Then there exists $\al>0$
so that $\al_n(\om)\geq\al$ for any $j_0\leq n\leq 2j_0$.
\end{assumption}

Let $\mu_\om$ be a (measurable in $\om$) probability measure on $\cE_\om$ and let $\xi_n^{\te^n\om},\,n\geq1$
be a Markov chain with initial distribution $\mu_\om$ whose $n$-th step operator is given by 
$R_0^{\om,n}$. Set 
\[
S_n^\om=\sum_{j=0}^{n-1}u_{\te^j\om}(\xi_j^{\te^j\om}).
\]
Let $S_n$ be the random variable generated by drawing $\om$ according to $P$ and taking on the fibers
the distribution of $S_n^\om$, namely the random variable whose characteristic function is given by
\[
\bbE e^{itS_n}=\int\mu_\om(R_{it}^{\om,n}\textbf{1})dP(\om).
\]
Under Assumption \ref{Doeb}, in \cite{Kifer-1998} the author proved that the limit $\sig^2=\lim_{n\to\infty}n^{-1}\text{var}S_n^\om$ exists $P$-a.s., and it does not depend on $\om$.

Next, let $\om_0\in\Om$ and $n_0\in\bbN$ be so that $\te^{n_0}\om_0=\om_0$. We will call the case the non-lattice case if for any $t\in\bbR\setminus\{0\}$ the spectral radius of the operator $R_{it}^{\om_0,n_0}$ is strictly less than $1$. We will call 
the case a lattice one if for some $h>0$ the function $u$ takes values on the lattice $h\bbZ$ and 
the spectral radius of the operators 
$R_{it}^{\om_0,n_0},\,t\in(-\frac{2\pi}h,\frac{2\pi}h)\setminus\{0\}$ are strictly less than $1$. We refer to \cite{HenHerve} for a characterization 
of these lattice and non-lattice cases which resembles the description of these cases in the transfer operator case.
\begin{theorem}
Suppose that Assumptions \ref{Meas ass} and \ref{Doeb} hold true, that $\sig^2>0$ and that $\gam=\mu_\om(u_\om)$ does not depend on $\om$. Then $\sig^{-1}n^{-\frac12}(S_n-n\gam),\,n\geq1$ converges  in distribution as $n\to\infty$ towards the standard normal law, and $S_n-n\gam,\,n\geq1$ satisfies the appropriate LLT (in both lattice and non-lattice cases). Moreover,  when $\gam>0$ then
all the statements in Theorem \ref{renewa2} hold true.
\end{theorem}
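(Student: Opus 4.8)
The plan is to repeat, essentially verbatim, the arguments of Sections \ref{sec3}, \ref{sec4} and \ref{Renewal Section}, with the family of integral operators $R_z^\om$ from (\ref{IntOp}) playing the role of the normalized transfer operators $\cA_z^\om$. The first step is to establish the random complex RPF theorem for $\{R_z^\om\}$ under Assumption \ref{Doeb}: the two-sided Doeblin inequality (\ref{al j om}) gives, for $P$-a.e.\ $\om$, a uniform contraction of the iterates $R_0^{\om,m}$, $m\ge j_0$, in the Hilbert projective metric on the positive cone of $B_\om$, which is exactly the input required by the abstract construction of \cite{book}, while Assumption \ref{Meas ass} supplies the measurability in $\om$. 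This yields the analogue of Theorem \ref{RPF rand T.O. general} --- a measurable, analytic, uniformly bounded RPF triplet $(\la_\om(z),h_\om(z),\nu_\om(z))$ on a neighbourhood $U$ of $0$, with $\la_\om(0)=1$, $h_\om(0)\equiv\textbf{1}$, the bounds (\ref{UnifBound}) and the exponential convergence (\ref{Exp Conv final.0}) --- and hence also the pressure Lemmas \ref{Lemma one}, \ref{Press der general} and \ref{Remainder Corollary 1}, whose proofs use only the RPF conclusions. Here the Lasota--Yorke inequality (\ref{L.Y.-general}) plays no role: one simply has $\|R_{it}^{\om,n}\|_\infty\le\|R_0^{\om,n}\textbf{1}\|_\infty=1$, and no analogue of (\ref{Norm comparison}) is needed.

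With the RPF in hand I would produce the two families of norm estimates exactly as in the two subsections on norm estimates of Section \ref{sec4}. For small $t$, by Lemmas \ref{Press der general} and \ref{Remainder Corollary 1}, writing $|\la_{\om,n}(it)|=e^{\Re\Pi_{\om,n}(it)}$, there are $\del_0,c_1>0$ with $|\la_{\om,n}(it)|\le Ae^{-c_1nt^2}$ for $|t|\le\del_0$ whenever $\text{var}_{\mu_\om}S_n^\om u\ge cn$; together with (\ref{V-mix}) this gives sets $\Gam_n$ with $1-P(\Gam_n)\le d_1n^{-\be}$ on which $|\la_{\om,n}(it)|\le Ae^{-d_2nt^2}$, whence via the decomposition $R_{it}^{\om,n}=\la_{\om,n}(it)\big(\nu_{\te^n\om}(it)(\cdot)h_\om(it)+O(c^n)\big)$ the corresponding control of $\|R_{it}^{\om,n}\|_\infty$ on $\Gam_n$. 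For $t$ in a compact set $J$ avoiding the origin (a compact subset of $(-\frac{2\pi}h,\frac{2\pi}h)\setminus\{0\}$ in the lattice case), the proof of Lemma \ref{Del n Lemmma} carries over unchanged: the lattice / non-lattice hypothesis makes the spectral radius of $R_{it}^{\om_0,n_0}$ strictly below $1$ uniformly for $t\in J$, continuity of $\om\mapsto R_{it}^\om$ at the points $\te^j\om_0$ yields neighbourhoods on whose intersection $\|R_{it}^{\om,n_0s}\|_\infty\le(2B_J)^{-1}$, and (\ref{Neighb mix}) gives sets $\Del_n$ with $1-P(\Del_n)\le dn^{-\be}$ on which $\sup_{t\in J}\|R_{it}^{\om,n}\|_\infty\le 4\cdot 2^{-un}$; off $\Del_n$ one again uses $\|R_{it}^{\om,n}\|_\infty\le1$. (For the LLT and renewal parts I keep Assumptions \ref{PerPointAss} and \ref{theta mixing assumption} in force, with $\be>\frac12$ and $\be>1$ respectively --- these being understood as part of what ``the appropriate LLT'' and ``Theorem \ref{renewa2}'' mean here.)

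Given these estimates, the three limit theorems follow by transcription. For the CLT, one replaces $u_\om$ by $u_\om-\gam$ (legitimate as $\gam$ is constant) and uses $\bbE e^{itS_n}=\int\mu_\om(R_{it}^{\om,n}\textbf{1})dP(\om)=\int e^{\Pi_{\om,n}(it)}\varphi_{\om,n}(it)dP(\om)$ with $\varphi_{\om,n}(z)=\mu_\om\big(\la_{\om,n}(z)^{-1}R_z^{\om,n}\textbf{1}\big)$, $\varphi_{\om,n}(0)=1$, $|\varphi_{\om,n}(z)-1|\le B|z|^2$ near $0$; combining the Taylor expansion of $\Pi_{\om,n}$ at $0$ (Lemmas \ref{Press der general}, \ref{Remainder Corollary 1}), the almost sure convergence of $n^{-1}\Pi'_{\om,n}(0)$ and $n^{-1}\Pi''_{\om,n}(0)$ to the asymptotic mean and to $\sig^2$ (see \cite{Kifer-1998} and Section 7.2 of \cite{book}), and the set $\Gam_{n,\ve}$ of the proof of Theorem \ref{CLT}, the argument there gives $\bbE e^{it\hat S_n}\to e^{-\sig^2t^2/2}$, hence the CLT by the L\'evy continuity theorem. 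The LLT follows exactly as in the proof of Theorem \ref{LLT--D.S.}, which reduces (via Theorem 2.3 of \cite{book}) to verifying Assumptions 2.2.1 and 2.2.2 of \cite{book} from $|\bbE e^{itS_n}|\le 1-P(\Gam)+\sup_{\om\in\Gam}\|R_{it}^{\om,n}\|_\infty$ with $\Gam=\Gam_n$ for small $t$ and $\Gam=\Del_n$ for $t$ in compacts away from $0$, using $\be>\frac12$. For $\gam>0$, the renewal theorem follows as in Section \ref{Renewal Section}: Lemmas \ref{VII.3}, \ref{VII.4} and the estimates (\ref{Good Exp1})--(\ref{Good Exp3}) go through verbatim with the $\Gam_n$, $\Del_n$ above and $\be(1-\frac1p)>1$; when $\mu_\om(h_\om'(0))$ is not $\om$-independent but $\nu_\om'(0)(f_\om)$ is, one uses the modification indicated before Theorem \ref{renewa2}, rewriting $\bbE_P[\nu_\om(it)(f_\om)f_\rho^\om(t)]=\bbE_P[\mu_\om(h_\om(it))f_{1,\rho}^\om(t)]$ and repeating the proof of Lemma \ref{VII.4} with $\la_{\te^{-n}\om,n}(it)$, $\te^{-n}\Gam_n$ and $\nu_{\te^{-n}\om}(it)(f_{\te^{-n}\om})$ in place of $\la_{\om,n}(it)$, $\Gam_n$ and $\mu_{\te^n\om}(h_{\te^n\om}(it))$; recall that $\nu_\om(0)(h_\om'(0))=0$, so the choice $\mu_\om=\nu_\om(0)$ makes $\mu_\om(h_\om'(0))$ constant automatically.

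The one genuinely new point --- and the step I expect to be the main obstacle --- is the first: checking that the random two-sided Doeblin condition (\ref{al j om}), in place of the cone contraction coming from topological exactness and the pairing property, still supplies every hypothesis of the abstract complex RPF construction of \cite{book}, together with the measurability of the resulting triplet under Assumption \ref{Meas ass} and the uniform analytic bounds (\ref{UnifBound}). Everything downstream is a faithful copy of Sections \ref{sec3}--\ref{Renewal Section} in which the a priori bound $\|R_{it}^{\om,n}\|_\infty\le1$ only simplifies the argument --- in particular it removes the need for the random Lasota--Yorke inequality and for the norm comparison (\ref{Norm comparison}).
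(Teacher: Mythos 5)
Your proposal is correct and matches what the paper intends. The paper gives no explicit proof of this theorem --- it is stated without proof, the understanding being precisely the transcription you describe: the random complex RPF machinery of \cite{book} also applies to the integral operators $R_z^\om$ (cone contraction in the Hilbert projective metric now supplied by the two-sided Doeblin condition (\ref{al j om}) instead of topological exactness and the pairing property, with Assumption \ref{Meas ass} providing measurability), after which the small-$t$ and large-$t$ norm estimates, the CLT, the LLT and the renewal theorem are proved exactly as in Section \ref{sec4}, Lemma \ref{Del n Lemmma} and Section \ref{Renewal Section}, with $\|\cdot\|_\infty$ in place of $\|\cdot\|_{\al,\xi}$. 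You also correctly identify the two simplifications the paper itself flags in Section \ref{Additional} --- the trivial bound $\|R_{it}^{\om,n}\|_\infty\le 1$ makes the Lasota--Yorke inequality (\ref{L.Y.-general}) and the norm comparison (\ref{Norm comparison}) unnecessary --- and you correctly read the implicit hypotheses: Assumptions \ref{PerPointAss} and \ref{theta mixing assumption} (with $\be>\frac12$, resp.\ $\be>1$) are in force as part of ``the appropriate LLT'' and of the reference to Theorem \ref{renewa2}, with the spectral-radius condition on $R_{it}^{\om_0,n_0}$ stated just before the theorem playing the role of the non-lattice/lattice hypothesis. The one point you single out as the potential obstacle --- that the Doeblin condition really does feed the abstract RPF construction of \cite{book} --- is indeed the only step not literally copied from the earlier sections, and it is handled in \cite{book}; your sketch of why (two-sided kernel bounds give uniform cone contraction, measurability from Assumption \ref{Meas ass}) is the right argument.
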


\begin{remark}
In the above integral operator case 
it is possible to obtain similar limit theorems without using Assumption \ref{PerPointAss} 
and (\ref{Neighb mix}), relying instead on some assumption on the distribution
of the process $j_{\te^k\om},\,k\geq 1$ in the spirit of (\ref{V-mix}).
In \cite{HKllt} we proved a local limit theorem for certain ``nonconventional" sums. Our proof
there was based on a certain reduction to a problem of bounding expectations of norms of 
iterates of random Fourier operators (the proof was in the spirit of the argument in 
\cite{Neg2}). This is exactly the situation of annealed limit theorems, and so, similar
to \cite{HKllt} argument will yield the desired results.
\end{remark}


\begin{thebibliography}{Bow75}

\bibliographystyle{alpha}
\itemsep=\smallskipamount




\bibitem{Aimino}
R. Aimino, M. Nicol and S. Vaienti. {\em Annealed and quenched limit theorems for random expanding dynamical systems}, Probab. Th. Rel. Fields 162, 233-274, (2015).




\bibitem{BHK}
J.R. Blum, D.L. Hanson and L.H. Koopmans, {\em On the strong law of large numbers
for a class of stochastic processes},
 Z. Wahrsch. verw. Geb 2 (1963), 1-11.

 \bibitem{Bow}
R. Bowen, {\em Equilibrium states and the ergodic theory of Anosov diffeomorphisms},
 Lecture Notes in Mathematics, volume 470, Springer Verlag, 1975.
 
\bibitem{Brei}
L. Breiman, {\em Probability}, SIAM, Philadelphia (1992).
 


\bibitem{Chaz}
J.R. Chazottes and S. Gou\"ezel, {\em Optimal concentration inequalities for dynamical systems}, 
Comm. Math. Phys. 316 (2012),  843-889.


\bibitem{Chaz2}
J.R. Chazottes, {\em Fluctuations of observables in dynamical systems: from limit theorems to concentration inequalities}, Nonlinear dynamics new directions, Vol. 11,  Nonlinear Syst. Complex., pages 47–85. Springer, 2015.





\bibitem{CV}
C. Castaing and M.Valadier, {\em Convex analysis and measurable multifunctions}, 
Lecture Notes Math., vol. 580, Springer, New York, 1977.


\bibitem{Ded}
J. Dedecker, P. Doukhan, G. Lang, J.R. León, S. Louhichi, C. Prieur, {\em Weak Dependence: With Examples and Applications}, Lecture Notes in Statistics, vol 190, Springer-Verlag, Berlin (2007).


\bibitem{Douk}
P. Doukhan and M.Neumann, {\em Probability and moment inequalities for sums of weakly dependent random variables, with applications}, Stochastic Process. Appl. 117 (2007), 878-903.

\bibitem{drag}
D. Dragi\v{c}evi\'c, G. Froyland, C. Gonz\'alez-Tokman and S. Vaienti,
{\em A spectral approach for quenched limit theorems for random expanding dynamical systems},
Commun. Math. Phys. 360, 1121-1187 (2018).


\bibitem{GH}
Y. Guivar\'ch and J. Hardy, {\em Th\'eor\`emes limites pour une classe de cha\^ines de Markov et
applications aux diff\'eomorphismes d'Anosov},
 Ann. Inst. H. Poincar\'e Probab. Statist. 24 (1988), no. 1, 73-98.

\bibitem{HKllt}
Y. Hafouta and Yu. Kifer, {\em A nonconventional local limit theorem}, 
J. Theor. Probab. 29 (2016), 1524-1553.

\bibitem{book}
Y. Hafouta and Yu. Kifer, {\em Nonconventional limit theorems and random dynamics}, 
World Scientific, Singapore, 2018.



\bibitem{Haf-MD}
Y. Hafouta, {\em Nonconventional moderate deviations theorems and exponential concentration inequalities}, arXiv preprint 1805.00849.
 
\bibitem{HenHerve}
H. Hennion and L. Herv\'e, {\em Limit Theorems for Markov Chains and Stochastic Properties of Dynamical
Systems by Quasi-Compactness}, Lecture Notes in Mathematics vol. 1766, Springer, Berlin, 2001.

\bibitem{Hyd}
N.T.A. Haydn and Y. Psiloyenis, {\em Return times distribution for Markov towers with decay of correlations}, Nonlinearity 27 (2014), 1323-1349.
 
\bibitem{Hyd1}
N.T.A. Haydn and F. Yang, {\em Local escape rates for $\phi$-mixing dynamical systems},
arXiv preprint 1806.07148.

\bibitem{Ish}
H. Ishitani, {\em Central limit theorems for the random iterations of 1-dimensional transformations (Dynamics of complex systems)}, RIMS Kokyuroku, 1404, 21-31, (2004).

  


\bibitem{KalNe}
R.S. Kallabis and M.H. Neumann, {\em An exponential inequality under weak dependence},
Bernoulli 12 (2006) 333-350.










\bibitem{Kifer-1991}
Yu. Kifer, {\em Large deviations for random expanding maps},
Lyapunov Exponents, eds. L. Arnold, H. Crauel and J.P. Eckmann (Springer-Verlag, 1991), pp. 178-186.

\bibitem{Kifer-1996}
Yu. Kifer, {\em Perron-Frobenius theorem, large deviations, and random perturbations
in random environments},
 Math. Z. 222(4) (1996), 677-698.

\bibitem{Kifer-1998}
Yu. Kifer, {\em Limit theorems for random transformations and processes in random
environments},
Trans. Amer. Math. Soc. 350 (1998), 1481-1518.

\bibitem{Kifer-Thermo}
 Yu. Kifer, {\em Thermodynamic formalism for random transformations revisited},
Stoch. Dyn. 8 (2008), 77-102.




\bibitem{MSU}
V. Mayer, B. Skorulski and M. Urba\'nski, {\em Distance expanding random mappings,
thermodynamical formalism, Gibbs measures and fractal geometry},
Lecture Notes in Mathematics, vol. 2036 (2011), Springer.
 

\bibitem{Neg1}
S.V. Nagaev, {\em Some limit theorems for stationary Markov chains},
Theory Probab. Appl. 2 (1957), 378-406.

\bibitem{Neg2}
S.V. Nagaev, {\em More exact statements of limit theorems for homogeneous Markov chains}, 
Theory Probab. Appl. 6 (1961), 62-81.

\bibitem{SaulStat}
L. Saulis  and V.A. Statulevicius, {\em Limit Theorems for Large Deviations}, 
Kluwer Academic, Dordrecht, Boston, 1991.



\bibitem{Young1}
L.S. Young, {\em Statistical properties of dynamical systems with some hyperbolicity}, 
 Ann. Math. 7 (1998) 585-650.

\bibitem{Young2}
L.S. Young, {\em  Recurrence time and rate of mixing}, 
Israel J. Math. 110 (1999) 153-88.








\end{thebibliography}
\end{document}